\documentclass[a4paper,11pt]{amsart}

\usepackage[all]{xy}

\usepackage[utf8]{inputenc}		
\usepackage[T1]{fontenc}
\usepackage[english]{babel}

\usepackage{amsfonts}			
\usepackage{amsmath}
\usepackage{amssymb}
\usepackage{amsthm}
\usepackage{mathtools}

\usepackage{hyperref}			
	\hypersetup{colorlinks=false, urlcolor=black, linkcolor=black}

\usepackage{enumitem}			

\newcommand{\Z}{\mathbb{Z}}						
\newcommand{\R}{\mathbb{R}}						
\newcommand{\C}{\mathbb{C}}						

\renewcommand{\S}{\mathbb{S}}					
\newcommand{\B}{\mathbb{B}}						
\newcommand{\T}{\mathbb{T}}						

\newcommand{\Julia}{\mathcal{J}}				
\newcommand{\Fatou}{\mathcal{F}}				


\newcommand{\dd}								
	{\mathop{}\!\mathrm{d}}						
\newcommand{\ddn}[1]							
	{\mathop{}\!\mathrm{d^{#1}}}

\newcommand{\abs}[1]							
	{\left| #1 \right|}
\newcommand{\smallabs}[1]						
	{\lvert #1 \rvert}	
\newcommand{\norm}[1]							
	{\left\lVert #1 \right\rVert}	
\newcommand{\smallnorm}[1]						
	{\lVert #1 \rVert}						
\newcommand{\ip}[2]								
	{\left< #1 , #2 \right>}

\DeclareMathOperator{\id}{id}					
\DeclareMathOperator{\vol}{vol}					
\DeclareMathOperator{\spt}{spt}					
\DeclareMathOperator*{\esssup}{ess\,sup}		

\let\Re\relax									
\let\Im\relax
\DeclareMathOperator{\Re}{Re}
\DeclareMathOperator{\Im}{Im}					
\DeclareMathOperator{\im}{im}					

\newcommand{\push}[1]{{#1}_*\,}					

\newcommand{\hodge}{\mathtt{\star}\hspace{1pt}}
\newcommand{\cesob}{W^{d}_\text{CE}}
\newcommand{\cesobt}{\widetilde{W}^{d}_{\text{CE}}}

\newcommand{\cehom}[1]{H_{\text{CE}}^{#1}}
\newcommand{\cenorm}[1]{\norm{#1}_{d, \text{CE}}}

\newcommand{\fharm}[1]{\mathcal{H}_f^{#1}}

\newcommand{\cG}{\mathcal{G}}

\newcommand{\cE}{\mathcal{E}}

\newenvironment{acknowledgments}
	{\bigskip\noindent{\bf Acknowledgments.}}{}

\newtheorem{thm}{Theorem}[section]{\bf}{\it}
\newtheorem{lemma}[thm]{Lemma}
\newtheorem{prop}[thm]{Proposition}
\newtheorem{cor}[thm]{Corollary}
\newtheorem{qu}[thm]{Question}

{\bf}{\it}
\newenvironment{customthm}[1]
	{\innercustomthm}
	{\endinnercustomthm}

\theoremstyle{definition}

\theoremstyle{remark}
\newtheorem{rem}[thm]{Remark}

\numberwithin{equation}{section}

\begin{document}

\title[]{Sharp cohomological bound for uniformly quasiregularly elliptic manifolds}
\author{Ilmari Kangasniemi}
\address{Department of Mathematics and Statistics, P.O. Box 68 (Gustaf H\"allstr\"omin katu 2b), FI-00014 University of Helsinki, Finland}
\email{ilmari.kangasniemi@helsinki.fi}

\begin{abstract}
	We show that if a compact, connected, and oriented $n$-manifold $M$ without boundary admits a non-constant non-injective uniformly quasiregular self-map, then the dimension of the real singular cohomology ring $H^*(M; \R)$ of $M$ is bounded from above by $2^n$. This is a positive answer to a dynamical counterpart of the Bonk-Heinonen conjecture on the cohomology bound for quasiregularly elliptic manifolds. The proof is based on an intermediary result that, if $M$ is not a rational homology sphere, then each such uniformly quasiregular self-map on $M$ has a Julia set of positive Lebesgue measure.
\end{abstract}

\thanks{This work was supported by the doctoral program DOMAST of the University of Helsinki and the Academy of Finland project \#297258.}
\subjclass[2010]{Primary 30C65; Secondary 57M12, 30D05}
\keywords{uniformly quasiregular mappings, Sobolev--de Rham cohomology}
\date{\today}

\maketitle 

\section{Introduction}\label{sect:intro}

A continuous map $f \colon M \to N$ between two oriented Riemannian $n$-manifolds ($n \geq 2$) is \emph{$K$-quasiregular} for $K\geq 1$ if it belongs to the Sobolev space $W^{1, n}(M, N)$ and satisfies the distortion inequality $\norm{Df}^n \leq K J_f$ almost everywhere on $M$ in the Lebesgue sense. Here, $\norm{Df}$ is the operator norm of $Df$, and $J_f$ is the Jacobian determinant $\det Df$ of $f$. A quasiregular self-map $f \colon M \to M$ is called \emph{uniformly $K$-quasiregular} if all iterates $f^m, m\geq 1$, are $K$-quasiregular.

Our main result is that a closed manifold admitting a non-constant non-injective uniformly quasiregular self-map has uniformly bounded cohomology. Here and in what follows, we call a Riemannian manifold $M$ \emph{closed} if it is compact, connected, oriented and without boundary.

\begin{thm}\label{thm:dynamical_bonk_heinonen}
	Let $n \geq 2$ and let $M$ be a compact, connected, and oriented Riemannian $n$-manifold without boundary. Suppose that $M$ admits a non-constant non-injective uniformly quasiregular self-map $f \colon M \to M$. Then, for every $k \in \{0, \ldots, n\}$,
	\[
		\dim H^k(M; \R) \leq \binom{n}{k}.
	\]
	In particular,
	\[
		\dim H^*(M; \R) \leq 2^n. 
	\]
\end{thm}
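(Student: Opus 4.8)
The first move is to dispose of the case where $M$ is a rational homology sphere: there $H^k(M;\R)$ vanishes for $0<k<n$ and is one–dimensional for $k\in\{0,n\}$, so $\dim H^k(M;\R)\le\binom nk$ holds trivially. Thus assume $M$ is not a rational homology sphere, so that, by the intermediary theorem quoted above, $\Julia(f)$ has positive Lebesgue measure. I would then record three structural facts about $f$: (i) as a non-constant non-injective quasiregular self-map, $f$ is a discrete open branched cover of degree $d:=\deg f\ge 2$, whence $\int_M f^{m*}\alpha=d^m\int_M\alpha$ for every top form $\alpha$; (ii) being uniformly quasiregular, $f$ carries a bounded measurable $f$-invariant conformal structure with respect to which it behaves like a rational map, and, since $\smallabs{\Julia(f)}>0$, the dynamics of $f$ on $\Julia(f)$ is governed by an $f$-invariant ergodic measure $\nu$ comparable to Lebesgue measure restricted to $\Julia(f)$; and (iii) the value-distribution estimate $\int_M\smallnorm{Df^m}^n\le K\int_M J_{f^m}=K\,d^m\vol(M)$ for all $m$.

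Next I would set up cohomology through the Sobolev--de Rham groups $\phom kp$, which for the closed manifold $M$ are canonically isomorphic to $H^k(M;\R)$ for every $1\le p\le\infty$; in particular each class has a smooth closed representative, while its pullback under $f^m$ is a closed representative lying only in $L^{n/k}$. The distortion inequality gives $\smallnorm{f^{m*}\omega}\le\smallnorm{Df^m}^k\,\smallnorm{\omega\circ f^m}$, so with (iii) and a change of variables
\[
	\norm{f^{m*}\omega}_{L^{n/k}(M)}^{n/k}\le K\int_M\smallnorm{\omega\circ f^m}^{n/k}J_{f^m}\,\dd\vol=K\,d^m\norm{\omega}_{L^{n/k}(M)}^{n/k},
\]
and symmetrically $\norm{f^{m*}\eta}_{L^{n/(n-k)}(M)}\lesssim(d^m)^{(n-k)/n}\norm{\eta}_{L^{n/(n-k)}(M)}$ for $(n-k)$-forms $\eta$. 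On the other hand, Poincaré duality makes $([\omega],[\eta])\mapsto\int_M\omega\wedge\eta$ a perfect pairing $H^k(M;\R)\times H^{n-k}(M;\R)\to\R$, and since $\deg f^m=d^m$,
\[
	\int_M f^{m*}\omega\wedge f^{m*}\eta=\int_M f^{m*}(\omega\wedge\eta)=d^m\int_M\omega\wedge\eta.
\]

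For the core of the argument, fix $k\in\{1,\dots,n-1\}$, put $b:=\dim H^k(M;\R)$, pick a basis $\omega_1,\dots,\omega_b$ of $H^k(M;\R)$ with dual basis $\eta_1,\dots,\eta_b$ of $H^{n-k}(M;\R)$, and set $G^m_i:=(d^m)^{-k/n}f^{m*}\omega_i$ and $H^m_j:=(d^m)^{-(n-k)/n}f^{m*}\eta_j$; then $\norm{G^m_i}_{L^{n/k}}$ and $\norm{H^m_j}_{L^{n/(n-k)}}$ stay bounded while $\int_M G^m_i\wedge H^m_j=\delta_{ij}$ for every $m$. Assume for contradiction that $b>\binom nk$. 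Since $G^m_1(x),\dots,G^m_b(x)$ then lie in the $\binom nk$-dimensional space $\Lambda^k T^*_xM$ at a.e.\ $x$, I would choose a measurable unit field $v^m\colon M\to\S^{b-1}$ with $\sum_i v^m_i(x)G^m_i(x)=0$ a.e., so that $\sum_i\int_M v^m_i\,(G^m_i\wedge H^m_j)=0$ for all $j$ and $m$. The plan is then to let $m\to\infty$: extract weak limits of the reflexive families $G^m_i$, $H^m_j$ and of $v^m$, and — crucially — use the $f$-invariant conformal structure together with equidistribution of the iterated pullbacks relative to $\nu$ on the positive-measure Julia set to show that the wedge pairings $\int_M v^m_i\,(G^m_i\wedge H^m_j)$ pass to the limit and that the diagonal terms keep contributing through a fixed nondegenerate pairing, forcing $\delta_{ii}=0$, a contradiction. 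This yields $b\le\binom nk$; the cases $k=0,n$ are trivial, and summing over $k$ gives $\dim H^*(M;\R)\le 2^n$.

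I expect the last, limiting step to be the main obstacle: products of merely weakly convergent $L^p$-forms need not converge in the wedge pairing, so one must genuinely exploit the rigidity of quasiregularity — the bounded distortion of the $f^m$ with respect to the $f$-invariant conformal structure prevents the pullbacks $f^{m*}\omega_i$ from oscillating freely — together with positivity of $\smallabs{\Julia(f)}$, which is exactly what keeps $\nu$ visible to Lebesgue integrals and prevents the mass $d^m\int_M\omega_i\wedge\eta_i$ from escaping onto a null set. Secondary technical points I would anticipate are the measurable selection of the dependence field $v^m$ and the handling of the branch sets of the $f^m$, off which alone the pullbacks behave like quasiconformal coordinate changes. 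This overall architecture is the dynamical analogue of Prywes's resolution of the Bonk--Heinonen conjecture, with the Julia set and its invariant measure playing the role of $\R^n$ with its polynomial volume growth.
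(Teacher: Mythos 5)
Your proposal sets up a Prywes-style argument (rescaled pullbacks $G^m_i$, $H^m_j$, pointwise linear dependence when $b>\binom nk$, Poincar\'e duality pairing $\int_M G^m_i\wedge H^m_j=\delta_{ij}$), but the step that would actually produce the contradiction is missing, and it is exactly the hard analytic core. You assume weak limits of $G^m_i$, $H^m_j$ and of the dependence field $v^m$ and then assert that ``the wedge pairings pass to the limit and the diagonal terms keep contributing,'' but, as you yourself note, wedge products of weakly convergent $L^{n/k}$- and $L^{n/(n-k)}$-forms do not converge, and the weighted pairings $\int_M v^m_i\,(G^m_i\wedge H^m_j)$ bear no a priori relation to $\delta_{ij}$ once the varying measurable weight $v^m_i(x)$ is inserted. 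No concrete mechanism is offered to rule out cancellation: in Prywes's argument this role is played by a positivity device (pairing $\omega_i$ with its Hodge dual, so that $\omega_i\wedge\star\omega_i=\abs{\omega_i}^2\vol_M\geq 0$ and pullback by a map of positive Jacobian preserves the sign), whereas an arbitrary Poincar\'e-dual basis $\eta_j$ carries no such sign information; and the invocation of ``equidistribution relative to $\nu$'' is not backed by any statement you could cite — in particular, Theorem \ref{thm:positive_julia_set} only gives $\mu_f\ll m_n$, not a measure comparable to Lebesgue restricted to $\Julia_f$, so even the structural fact (ii) you record is stronger than what is available. As written, the argument stops precisely where the proof has to begin.

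For contrast, the paper's proof avoids any limit of products of weakly convergent forms. It works in the conformal Sobolev cohomology $\cehom{k}(M;\C)$, where $f^*$ acts with eigenvalues of modulus $(\deg f)^{k/n}$; each eigenclass has a unique $\norm{\cdot}_{f,n/k}$-minimizing representative with respect to the Iwaniec--Martin invariant conformal structure, and these minimizers are genuine eigenforms (Corollary \ref{cor:eigenvalues_translate_to_forms}) with higher integrability (Proposition \ref{prop:higher_integrability}). Feeding the associated $n$-forms $\abs{\omega}_f^{n/k}\vol_M$ and $\abs{\ip{\omega_1}{\omega_2}_f}^{n/2k-1}\ip{\omega_1}{\omega_2}_f\vol_M$ into the Okuyama--Pankka convergence theorem shows that every eigenform has $\abs{\omega}_f^{n/k}\vol_M=\mu_f$ (so all are supported on $\Julia_f$, which has positive measure), that eigenforms with distinct eigenvalues are $f$-orthogonal a.e., and that within one eigenspace a Gram--Schmidt procedure yields a.e.\ pairwise $f$-orthogonal representatives. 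Evaluating at a single point $x\in\Julia_f$ then gives $\dim\cehom{k}(M;\C)\leq\dim_\C(\wedge^kT^*_xM\otimes\C)=\binom nk$. If you want to pursue your route instead, you would need to supply the missing compensated-compactness/positivity mechanism yourself; it is not a routine passage to the limit.
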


Note that the $n$-torus $\T^n$ satisfies $\dim H^k(\T^n; \R) = \binom{n}{k}$ for $k = 0, \ldots, n$, and that $\T^n$ admits a non-constant non-injective uniformly quasiregular self-map for every $n \geq 2$. Hence, the cohomology bound in Theorem \ref{thm:dynamical_bonk_heinonen} is sharp.

Theorem \ref{thm:dynamical_bonk_heinonen} is a dynamical counterpart of a conjecture of Bonk and Heinonen regarding the cohomological bound for quasiregularly elliptic manifolds. Recall that a closed $n$-manifold $M$ is called \emph{$K$-quasiregularly elliptic} for $K \geq 1$ if there exists a non-constant $K$-quasiregular map $\R^n \to M$. Correspondingly, we call a manifold $M$ \emph{uniformly quasiregularly elliptic} if it admits a non-constant non-injective uniformly quasiregular self-map. As a consequence of Zalcman's lemma, uniform quasiregular ellipticity implies quasiregular ellipticity for closed manifolds $M$; see e.g.\ Kangaslampi \cite[Theorem 5.7]{Kangaslampi-thesis}. The converse whether all closed quasiregularly elliptic $n$-manifolds are uniformly quasiregularly elliptic is true for $n = 2, 3$, but remains open in dimensions $n > 3$; see e.g.\ \cite[Theorem 7.1]{Kangaslampi-thesis} and \cite[p.220]{Bonk-Heinonen_Acta}. 
 
By a theorem of Bonk and Heinonen \cite[Theorem 1.1]{Bonk-Heinonen_Acta}, if $M$ is a closed $K$-quasiregularly elliptic $n$-manifold, then $\dim H^*(M; \R) \leq C$, where $C = C(n, K)$ is a constant depending on $n$ and $K$. In \cite[p. 222]{Bonk-Heinonen_Acta}, Bonk and Heinonen conjecture that the bound is independent of the distortion constant $K$, and more precisely, that the optimal bound is $\dim H^*(M; \R) \leq 2^n$.

This conjecture holds in dimensions $2$ and $3$; see \cite[Corollary 1.6]{Bonk-Heinonen_Acta}. It is also known that if $M$ is a quasiregularly elliptic manifold with a fundamental group $\pi_1(M)$ of polynomial order $n$, the conjectured bound of $2^n$ holds; see \cite[Corollary 1.4]{LuistoPankka2016Paper}. However, the conjecture is still open for general quasiregularly elliptic manifolds of dimension $n \geq 4$. Note also that without the compactness assumption, there exists no cohomological bound dependent only on $n$; see e.g.\ the Picard-type constructions of Rickman \cite{Rickman1985paper} and Drasin--Pankka \cite{DrasinPankka2015paper} of quasiregular maps from $\R^n$ onto punctured spheres.

The conjecture of Bonk and Heinonen is related to a question of Gromov and Rickman on whether all simply connected closed manifolds are quasiregularly elliptic; see \cite[p.183]{Rickman1988proc}, \cite[p. 200]{Gromov1981proc} and \cite[p.63, p.67]{Gromov2007book}. Special attention was given to the specific case of whether $(S^2 \times S^2) \# (S^2 \times S^2)$ is quasiregularly elliptic, which was eventually given an affirmative answer by Rickman \cite{Rickman2006paper}. Due to this, the question of whether $(S^2 \times S^2) \# (S^2 \times S^2)$ is uniformly quasiregularly elliptic is of particular interest.

\subsection{Positive measure of Julia sets}

One of the key ingredients in the proof of Theorem \ref{thm:dynamical_bonk_heinonen} is a result that for a closed manifold $M$ which is not a rational cohomology sphere, all uniformly quasiregular self-maps on $M$ have large Julia sets. Recall that for a uniformly quasiregular self-map $f \colon M \to M$ on a closed $n$-manifold $M$, the \emph{Fatou set $\Fatou_f$ of $f$} is the union of all open sets $V \subset M$ on which the family $\{f^k|V \colon k \geq 1\}$ is normal, and the \emph{Julia set $\Julia_f$ of $f$} is the complement $M \setminus \Fatou_f$. For a more detailed exposition, see \cite{HinkkanenMartinMayer2004paper}.

Suppose that the uniformly quasiregular map $f$ on $M$ is non-constant and non-injective. In \cite{OkuyamaPankka2014paper}, Okuyama and Pankka construct an $f$-invariant probability measure $\mu_f$ on $M$. The measure $\mu_f$ is ergodic and balanced under $f$, and satisfies $\spt \mu_f = \Julia_f$. It turns out that, if $M$ is not a rational homology sphere, then $\mu_f$ is absolutely continuous with respect to the Lebesgue measure on $M$.

\begin{thm}\label{thm:positive_julia_set}
	Let $n \geq 2$, and let $M$ be a compact, connected, and oriented Riemannian $n$-manifold without boundary. Suppose that $M$ admits a non-constant non-injective uniformly quasiregular self-map $f \colon M \to M$. If $M$ is not a rational homology sphere, then the measure $\mu_f$ is absolutely continuous with respect to the Lebesgue measure $m_n$ on $M$, and $m_n(\Julia_f) > 0$.
\end{thm}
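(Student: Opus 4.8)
The plan is to exploit the balanced property of $\mu_f$ together with a cohomological obstruction coming from the fact that $M$ is not a rational homology sphere. Since $M$ is not a rational homology sphere, there is some $k \in \{1, \ldots, n-1\}$ with $H^k(M; \R) \neq 0$; pick a nonzero class represented by a closed $k$-form $\omega$, and by Poincaré duality pick a closed $(n-k)$-form $\eta$ with $\int_M \omega \wedge \eta = 1$. The key point is that $f$ being non-injective means its topological degree $\deg f$ satisfies $\abs{\deg f} \geq 2$, while the pullback $f^*$ acts on $H^k(M;\R)$ and on $H^{n-k}(M;\R)$, and the product of the spectral radii of these two actions is forced to be exactly $\abs{\deg f}$ by the identity $\int_M f^*\omega \wedge f^*\eta = \deg f \int_M \omega \wedge \eta$. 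Hence at least one of $f^*|H^k$, $f^*|H^{n-k}$ has spectral radius strictly greater than $1$; relabelling, assume it is $f^*$ on $H^j$ for some $j$ with $1 \le j \le n-1$.

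The next step is to promote this cohomological growth into an $L^1$ lower bound that concentrates near the Julia set. One passes to the de Rham / Sobolev-de Rham framework already alluded to in the paper: quasiregularity gives that $f^*$ acts boundedly on the relevant $L^{n/j}$-type spaces of forms, and iterating, $\norm{(f^m)^* \omega}$ in the appropriate norm grows like the spectral radius to the power $m$ up to subexponential factors, since $\omega$ is not exact. On the other hand, quasiregular distortion control bounds $\abs{(f^m)^*\omega}$ pointwise in terms of $\norm{Df^m}^j$, and the change-of-variables / Jacobian identity $\int_M \norm{Df^m}^n \leq K \int_M J_{f^m} = K\abs{\deg f}^m \vol(M)$ controls these integrals. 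Comparing the exponential rate $(\text{spectral radius})^m$ forced by cohomology against the rate $\abs{\deg f}^m$ available from the Jacobian, one sees that the mass of $\norm{Df^m}^j$ cannot be spread out: a definite fraction of it must sit on sets where the local behaviour of $f^m$ is genuinely branching/expanding, and such sets cluster on $\Julia_f = \spt \mu_f$. Running the Okuyama--Pankka construction of $\mu_f$ as a weak-$*$ limit of normalized pullbacks $\abs{\deg f}^{-m}(f^m)^* (\text{volume})$, this comparison shows that the limit measure cannot be singular with respect to $m_n$, because a singular measure would be concentrated on a set of vanishing $m_n$-measure where the pulled-back volume forms, controlled by $\norm{Df^m}^n \le K J_{f^m}$, cannot accumulate enough mass. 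This yields absolute continuity of $\mu_f$, and then $m_n(\Julia_f) = m_n(\spt\mu_f) \geq \mu_f(\Julia_f) \cdot (\text{density bound})^{-1} > 0$ once one knows the density is locally bounded, or more simply $m_n(\Julia_f)>0$ follows immediately from $\mu_f \ll m_n$ and $\mu_f(M) = 1$ with $\spt\mu_f = \Julia_f$.

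The main obstacle I anticipate is making the middle step rigorous: turning ``$f^*$ has a large eigenvalue on cohomology'' into a \emph{quantitative, localizable} statement about where the Jacobian mass of the iterates lives. The naive estimate $\norm{(f^m)^*\omega}_{L^1} \gtrsim r^m$ combined with $\norm{(f^m)^*\omega}_{L^{n/j}} \lesssim \abs{\deg f}^{(j/n)m}$ (via Hölder and the distortion bound) is not by itself contradictory when $r \le \abs{\deg f}^{j/n}$, so one needs the sharper input that, \emph{if $\mu_f$ were singular}, the measures $\abs{\deg f}^{-m}(f^m)^*(\text{vol})$ escape all sets of positive $m_n$-measure, which lets one improve the Hölder loss to something strictly better than $\abs{\deg f}^{(j/n)m}$ and force the contradiction. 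Equivalently, one needs a Cauchy--Schwarz / energy argument relating $\bigl(\int (f^m)^*\omega \wedge (f^m)^*\eta\bigr)$ to $\int_E \norm{Df^m}^n$ over the escaping set $E$, using that $f^*\eta$ and $f^*\omega$ are jointly controlled. Carrying this out cleanly — choosing the right Sobolev-de Rham spaces so that $f^*$ is bounded and the pairing is continuous, and quantifying ``escaping'' — is where the real work lies; the cohomological input, Poincaré duality, and the basic quasiregular estimates are all standard.
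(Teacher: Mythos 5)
Your proposal does not contain a proof: the step you yourself flag as ``where the real work lies'' is precisely the missing idea, and it is not the route the paper takes. The paper's mechanism is not a growth-rate comparison or a contradiction argument against singularity. Instead it uses the Iwaniec--Martin $f$-invariant conformal structure to build a measurable metric $\ip{\cdot}{\cdot}_f$ under which $f^*$ is \emph{exactly} conformally expanding on $L^{n/k}$-norms of $k$-forms, and then represents each class $c \in \cehom{k}(M;\C)$ by its unique $\norm{\cdot}_{f,n/k}$-minimizer $\omega_c$. The point is that this minimizer is compatible with pull-back, $f^*\omega_c = \omega_{f^*c}$, so for an eigenclass one gets an honest eigenform at the level of measurable forms, $f^*\omega_c = \lambda\omega_c$ with $\abs{\lambda} = (\deg f)^{k/n}$. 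Consequently the $n$-form $\eta = \abs{\omega_c}_f^{n/k}\vol_M$ satisfies $(f^m)^*\eta = (\deg f)^m\eta$ exactly, and after the higher-integrability result ($\omega_c \in L^{n/k,\sharp}$, needed so that $\eta \in L^{1,\sharp}$) the Okuyama--Pankka limit theorem identifies $\mu_f = \eta$ on the nose. Absolute continuity and $m_n(\Julia_f)>0$ are then immediate from $\spt\mu_f = \Julia_f$. Nothing in your sketch supplies a substitute for this exact invariance: as you concede, the naive bounds $\norm{(f^m)^*\omega}_{L^1}\gtrsim r^m$ versus $\norm{(f^m)^*\omega}_{L^{n/j}}\lesssim(\deg f)^{jm/n}$ are mutually consistent, and no mechanism is given for localizing Jacobian mass or for showing that a singular weak-$*$ limit forces the pulled-back forms to ``escape'' positive-measure sets; that escaping/energy argument is asserted, not constructed, and it is far from clear it can be made to work without something playing the role of the invariant conformal structure.

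Two further inaccuracies. First, your Poincar\'e duality claim that the product of the spectral radii of $f^*$ on $H^k$ and $H^{n-k}$ equals $\abs{\deg f}$ is wrong in general: the duality only gives that $f^*$ on $H^{n-k}$ is conjugate to $\deg f$ times the inverse transpose of $f^*$ on $H^k$, so the product of spectral radii is $\deg f\cdot\max\abs{\lambda}/\min\abs{\lambda} \geq \deg f$, with equality only when all eigenvalues share the same modulus. (The fact that \emph{every} eigenvalue on $\cehom{k}$ has modulus exactly $(\deg f)^{k/n}$ is a genuine theorem requiring uniform quasiregularity and the conformal structure, not a formal duality consequence; in the paper it appears as the computation \eqref{eq:eigenvalue_abs_value} for the norm-minimizing eigenform.) Second, your final sentence deducing $m_n(\Julia_f)>0$ via a ``density bound'' is unnecessary and unsupported; the correct and immediate deduction is the one you mention in passing: $\mu_f \ll m_n$, $\mu_f(M)=1$, and $\spt\mu_f=\Julia_f$ already force $m_n(\Julia_f)>0$. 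In summary, the cohomological starting point coincides with the paper's, but the core of the proof --- eigenforms via norm minimization in the invariant conformal structure, higher integrability, and the exact identification $\mu_f = \abs{\omega_c}_f^{n/k}\vol_M$ through the Okuyama--Pankka limit --- is absent, so the proposal has a genuine gap.
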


Note that there exist non-constant non-injective uniformly quasiregular maps with Julia sets of Lebesgue measure zero on the $n$-sphere $\S^n$; see e.g. \cite[Theorem 2]{Mayer1997paper} or \cite[Section 6.2]{Astola-Kangaslampi-Peltonen}.

In \cite[Conjecture 1.4]{MartinMayer2003paper}, Martin and Mayer conjecture that for $n \geq 3$, every uniformly quasiregular map $f \colon \S^n \to \S^n$ with a Julia set of positive measure is of the Latt\'es type. Recall that a uniformly quasiregular map $f \colon M \to M$ is of the \emph{Latt\'es type} if there exists a discrete group $\Gamma$ of isometries of $\R^n$, a quasiregular map $\varphi \colon \R^n \to M$ which is automorphic with respect to $\Gamma$ in the strong sense, and a linear conformal map $A \colon \R^n \to \R^n$ satisfying $A\Gamma A^{-1} \subset \Gamma$ and $f \circ \varphi = \varphi \circ A$; we refer to \cite[Section 21.4]{IwaniecMartin2001book} or \cite[Definition 2.2 and Theorem 2.3]{Astola-Kangaslampi-Peltonen} for a more detailed exposition. The conjecture of Martin and Mayer is a uniformly quasiregular version of the no invariant lines field conjecture of holomorphic dynamics. In light of Theorem \ref{thm:positive_julia_set}, this question takes the following form on closed manifolds with nontrivial rational cohomology.

\begin{qu}\label{qu:is_everything_lattes?}
	Let $M$ be a compact connected oriented Riemannian manifold without boundary. Suppose that $M$ is not a rational homology sphere. Is every non-constant non-injective uniformly quasiregular self-map on $M$ of the Latt\'es type?
\end{qu}

In \cite[Theorem 1.3]{MartinMayer2003paper}, Martin and Mayer prove a weaker statement that a uniformly quasiregular map $f \colon \S^n \to \S^n$ is of the Latt\'es type if it has a positive measured \emph{set of conical points}. Recall that a point $x_0 \in \S^n$ is a conical point of a uniformly quasiregular map $f \colon \S^n \to \S^n$ if there exist sequences $\rho_j \to 0$ and $k_j \to \infty$ for which $f^{k_j}(x_0 + \rho_j x)$ converges uniformly to a non-constant quasiregular map $\psi \colon \B^n \to \S^n$.

\subsection{Sketch of the proof of Theorem \ref{thm:dynamical_bonk_heinonen}}

We now outline the key ideas of the proof of the main results.

Let the mapping $f$ and the manifold $M$ be as in Theorem \ref{thm:dynamical_bonk_heinonen}. Due to a previous joint work with Pankka \cite{KangasniemiPankka2017paper}, the manifold $M$ admits a Sobolev-de Rham cohomology $\cehom{*}(M)$, which is naturally isomorphic to $H^*(M; \R)$, and for which a quasiregular map $M \to M$ induces a natural pull-back map $\cehom{*}(M) \to \cehom{*}(M)$. We consider a corresponding cohomology with complex coefficients $\cehom{*}(M; \C)$, which is naturally isomorphic to $\cehom{*}(M) \otimes \C$.  

By the invariant conformal structure of Iwaniec--Martin, see \cite[Theorem 5.1]{IwaniecMartin1996paper} or \cite[Theorem 21.5.1]{IwaniecMartin2001book}, there is a measurable Riemannian metric $\ip{\cdot}{\cdot}_f$ on $M$ satisfying
\[
	\ip{f^*\alpha}{f^*\beta}_f = \left(\ip{\alpha}{\beta}_f \circ f\right) J_f^\frac{2k}{n}
\]
almost everywhere on $M$ for all measurable $k$-forms $\alpha, \beta$ on $M$. Consequently, we obtain measurable pointwise norms $\abs{\cdot}_f$ for $k$-covectors, and $L^p$-norms $\norm{\cdot}_{f, p}$ for $1 \leq p \leq \infty$.

For $0 < k < n$, every complex cohomology class $c \in \cehom{k}(M; \C) $ contains a unique measurable complex $k$-form $\omega_c$ which minimizes the norm $\norm{\cdot}_{f, n/k}$. By a computation involving the quasiregular pushforward operator as in \cite{KangasniemiPankka2017paper}, we obtain 
\[
	f^*\omega_c = \omega_{f^*c} 
\]
for all $c \in \cehom{k}(M; \C) $. Furthermore, 
\[
	\omega_{\lambda c} = \lambda \omega_c 
\] 
whenever $c \in \cehom{k}(M; \C) $ and $\lambda \in \C$. This allows us to associate eigenvectors $c$ of $f^*$ in complex cohomology with eigenvectors $\omega_c$ of $f^*$ on the level of measurable complex differential forms.

The eigenvector form $\omega_c$ now yields a representation for $\mu_f$, which proves Theorem \ref{thm:positive_julia_set}. More precisely, we have the following. 

\begin{prop}\label{prop:invariant_measure_representation}
	Let $f \colon M \to M$ be a non-constant uniformly quasiregular map of degree at least two on a compact, connected, and oriented Riemannian $n$-manifold $M$ without boundary, where $n \geq 2$. Let $0 < k < n$ and suppose $c \in \cehom{k}(M; \C) \setminus \{0\}$ is a complex cohomology class satisfying $f^*c = \lambda c$ for some $\lambda \in \C \setminus \{0\}$. Then the invariant probability measure $\mu_f$ of Okuyama and Pankka has the representation
	\[
	\mu_f = \frac{\abs{\omega_c}_f^{\frac{n}{k}}}{\norm{\omega_c}_{f, \frac{n}{k}}^\frac{n}{k}} \vol_M
	\]
	as a measurable $n$-form, where $\omega_c \in c$ is the element minimizing the norm $\norm{\cdot}_{f, n/k}$ in $c$.
\end{prop}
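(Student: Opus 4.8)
The plan is to verify that the measurable $n$-form
\[
	\mu := \frac{\abs{\omega_c}_f^{n/k}}{\norm{\omega_c}_{f, n/k}^{n/k}}\,\vol_M ,
\]
regarded as a probability measure on $M$, is \emph{balanced} under $f$, and then to identify it with $\mu_f$ by the uniqueness of the balanced probability measure \cite{OkuyamaPankka2014paper}. The first step is to combine the identities $f^*\omega_c = \omega_{f^*c}$ and $\omega_{\lambda c} = \lambda\omega_c$ with the hypothesis $f^*c = \lambda c$; this shows that $\omega_c$ is an eigenform of the pull-back, $f^*\omega_c = \lambda\omega_c$ almost everywhere on $M$.

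Next, extend the pointwise norm $\abs{\cdot}_f$ on $k$-covectors to complex $k$-covectors; the Iwaniec--Martin conformal structure relation then gives the pointwise scaling $\abs{f^*\omega_c}_f = (\abs{\omega_c}_f \circ f)\,J_f^{k/n}$ a.e. Together with the eigenform relation this yields the pointwise identity
\[
	(\abs{\omega_c}_f \circ f)\,J_f^{k/n} = \abs{\lambda}\,\abs{\omega_c}_f \qquad\text{a.e.\ on }M ,
\]
where $J_f > 0$ a.e.\ by the standard properties of quasiregular maps. Raising this to the power $n/k$ and using $f^*\vol_M = J_f\,\vol_M$, we obtain an identity of measurable $n$-forms
\[
	f^*\sigma = \abs{\lambda}^{n/k}\,\sigma, \qquad \sigma := \abs{\omega_c}_f^{n/k}\,\vol_M .
\]
Since $c \neq 0$ forces $\omega_c \neq 0$, and $\omega_c$ is the $\norm{\cdot}_{f, n/k}$-minimizer in $c$, the form $\sigma$ is a finite, nonzero, positive measure with total mass $\sigma(M) = \norm{\omega_c}_{f, n/k}^{n/k} \in (0, \infty)$.

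It remains to pin down $\abs{\lambda}$. Because $f$ is a sense-preserving branched cover of degree $d = \deg f$, the area formula for quasiregular maps gives $\int_M f^*\sigma = \int_M (\abs{\omega_c}_f^{n/k} \circ f)\,J_f\,\vol_M = d\,\sigma(M)$; comparing with $\int_M f^*\sigma = \abs{\lambda}^{n/k}\sigma(M)$ and using $\sigma(M) > 0$ forces $\abs{\lambda}^{n/k} = d$, that is, $f^*\sigma = d\,\sigma$. Applying the area formula once more, for every Borel set $A$ on which $f$ is injective we get $\sigma(f(A)) = \int_A (\abs{\omega_c}_f^{n/k} \circ f)\,J_f\,\vol_M = d\,\sigma(A)$, so $\mu = \sigma/\sigma(M)$ is balanced under $f$. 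Since $\mu_f$ is the unique balanced probability measure on $M$, we conclude $\mu = \mu_f$, which is the asserted representation.

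The points in this argument that need care rather than a new idea are the following: that the almost-everywhere equality of measurable forms supplied by the conformal structure composes correctly under $f$, for which one uses that quasiregular maps satisfy Lusin's conditions (N) and (N$^{-1}$), so that $\abs{\omega_c}_f \circ f$ is defined a.e.\ and null sets are preserved in both directions; the application of the area formula to the merely $L^1$ density $\abs{\omega_c}_f^{n/k}$; and the precise form of the uniqueness statement for the balanced measure. The one genuinely substantive point is that the modulus of the eigenvalue is automatically $\abs{\lambda} = d^{k/n}$ --- this is exactly what makes the scaling relation $f^*\sigma = \abs{\lambda}^{n/k}\sigma$ coincide with the defining relation $f^*\mu_f = d\,\mu_f$ --- and, as shown above, it comes for free by integrating the pointwise identity rather than requiring any extra hypothesis.
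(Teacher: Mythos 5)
Your computation up to the scaling identity $f^*\sigma = (\deg f)\,\sigma$ for $\sigma = \abs{\omega_c}_f^{n/k}\vol_M$ is correct and matches the first half of the paper's argument (Lemma \ref{lemma:equilibrium_measure_integral_representation}), including the observation $\abs{\lambda} = (\deg f)^{k/n}$. The gap is in the final step: you identify $\sigma/\sigma(M)$ with $\mu_f$ by appealing to ``the uniqueness of the balanced probability measure'' in \cite{OkuyamaPankka2014paper}. No such uniqueness theorem is proved there, and the statement is false in general: balanced probability measures need not be unique for uniformly quasiregular maps. For instance, for the power-type maps of Mayer on $\S^n$ a fixed point of full local index carries a balanced Dirac mass, which coexists with the equilibrium measure $\mu_f$; the situation is the same as for $z \mapsto z^d$ on the Riemann sphere, where $\delta_0$, $\delta_\infty$ and normalized length on the unit circle are all balanced. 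So balancedness alone cannot pin down $\mu_f$, and this step of your argument does not close.

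The paper closes this step differently: since $f^*\eta = (\deg f)\eta$ for $\eta = \abs{\omega_c}_f^{n/k}\vol_M$, the normalized pullbacks $(f^m)^*\eta/(\deg f)^m$ are constantly equal to $\eta$, and the equidistribution theorem (Theorem \ref{thm:okuyama_pankka_complex_n-forms}) says they converge weakly to $(\int_M \eta)\,\mu_f$; hence $\eta = \norm{\omega_c}_{f,n/k}^{n/k}\,\mu_f$. But that theorem requires $\eta \in L^{1,\sharp}(\wedge^n M;\C)$, i.e.\ an $L^s$ density for some $s > 1$, and a priori $\abs{\omega_c}_f^{n/k}$ is only in $L^1$ since $\omega_c \in L^{n/k}$. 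This is exactly where the higher integrability of the norm-minimizer, $\omega_c \in L^{n/k,\sharp}(\wedge^k M;\C)$ (Proposition \ref{prop:higher_integrability}, the nonlinear Hodge-theoretic input of Iwaniec--Scott--Stroffolini adapted to complex coefficients), enters; your proposal never uses it. Even if you tried to salvage your route via the standard complex-dynamics argument ``balanced and within the scope of equidistribution implies equal to the equilibrium measure,'' you would still need your measure to have a better-than-$L^1$ density to feed it into the available equidistribution statement, so the higher integrability ingredient cannot be avoided. In short: the balancedness computation is fine, but the identification with $\mu_f$ requires Theorem \ref{thm:okuyama_pankka_complex_n-forms} together with Proposition \ref{prop:higher_integrability}, not a uniqueness claim for balanced measures.
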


Note that if $c_1$ and $c_2$ are two different eigenvectors of $f^* \colon \cehom{k }(M; \C) \to \cehom{k }(M; \C)$, then by Proposition \ref{prop:invariant_measure_representation} the corresponding $k$-forms $\omega_{c_1}$ and $\omega_{c_2}$ have the same support $\Julia_f$, and there exists a constant $C_{12}$ for which $\abs{\omega_{c_1}}_f = C_{12} \abs{\omega_{c_2}}_f$ almost everywhere. Using similar methods we show that the complex angle between $\omega_{c_1}$ and $\omega_{c_2}$ in the inner product $\ip{\cdot}{\cdot}_f$ is constant on $\Julia_f$. We formulate the result as follows. 

\begin{prop}\label{prop:norm_mimimizer_angles}
	Let $f \colon M \to M$ be a non-constant uniformly quasiregular map of degree at least two on a compact, connected, and oriented Riemannian $n$-manifold $M$ without boundary, where $n \geq 2$. Let $0 < k < n$, suppose $c_1, c_2 \in \cehom{k}(M; \C) \setminus \{0\}$ are complex cohomology classes satisfying $f^*c_1 = \lambda_1 c_1$ and $f^*c_2 = \lambda_2 c_2$ for some $\lambda_1, \lambda_2 \in \C \setminus \{0\}$, and let $\omega_{c_1} \in c_1$ and $\omega_{c_2} \in c_2$ minimize the norm $\norm{\cdot}_{f, n/k}$ in $c_1$ and $c_2$, respectively. Then the point-wise complex $f$-angle element
	\[
	x \mapsto \frac{\ip{(\omega_{c_1})_x}{(\omega_{c_2})_x}_f}{\abs{(\omega_{c_1})_x}_f \abs{(\omega_{c_2})_x}_f}
	\]
	between $\omega_{c_1}$ and $\omega_{c_2}$ is constant almost everywhere on $\Julia_f$. Furthermore, if $\lambda_1 \neq \lambda_2$, then $\ip{(\omega_{c_1})_x}{(\omega_{c_2})_x}_f = 0$ at almost every $x \in M$.
\end{prop}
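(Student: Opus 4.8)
The plan is to exploit the same pushforward machinery used to establish Proposition 2.4, but applied to the bilinear pairing of two eigenforms rather than to a single one. First I would recall the key identity $f^*\omega_{c_i} = \omega_{f^*c_i} = \lambda_i \omega_{c_i}$, together with the transformation rule $\ip{f^*\alpha}{f^*\beta}_f = (\ip{\alpha}{\beta}_f \circ f)\, J_f^{2k/n}$. Combining these gives, almost everywhere,
\[
\ip{(\omega_{c_1})}{(\omega_{c_2})}_f \circ f \cdot J_f^{\frac{2k}{n}} = \ip{f^*\omega_{c_1}}{f^*\omega_{c_2}}_f = \lambda_1 \overline{\lambda_2}\, \ip{(\omega_{c_1})}{(\omega_{c_2})}_f,
\]
so the measurable complex-valued function $g \coloneqq \ip{(\omega_{c_1})}{(\omega_{c_2})}_f$ satisfies $g \circ f \cdot J_f^{2k/n} = \lambda_1 \overline{\lambda_2}\, g$. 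By Proposition 2.4 applied to $c_1$, we also know $\abs{\omega_{c_1}}_f^{n/k} = c_1' \rho$ where $\rho$ is the density of $\mu_f$ with respect to $\vol_M$ and $c_1'$ a positive constant, and likewise for $c_2$; in particular $\abs{\omega_{c_1}}_f^2\abs{\omega_{c_2}}_f^2 = C\, \abs{\omega_{c_1}}_f^{2n/k}$ times a constant, and the same pushforward relation shows $(\abs{\omega_{c_1}}_f\abs{\omega_{c_2}}_f) \circ f \cdot J_f^{2k/n} = \abs{\lambda_1}\abs{\lambda_2}\, \abs{\omega_{c_1}}_f\abs{\omega_{c_2}}_f$.

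Next I would form the quotient $h \coloneqq g / (\abs{\omega_{c_1}}_f \abs{\omega_{c_2}}_f)$, defined $\mu_f$-a.e.\ on $\Julia_f = \spt\mu_f$ (where the denominator is positive since both eigenforms have full support there by Proposition 2.4). Dividing the two displayed relations, the Jacobian factors cancel exactly and we obtain
\[
h \circ f = \frac{\lambda_1 \overline{\lambda_2}}{\abs{\lambda_1}\abs{\lambda_2}}\, h \qquad \mu_f\text{-a.e.\ on } \Julia_f.
\]
Thus $\abs{h}$ is $f$-invariant $\mu_f$-a.e.; since $\mu_f$ is ergodic under $f$ (Okuyama--Pankka), $\abs{h}$ is $\mu_f$-a.e.\ equal to a constant. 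By Cauchy--Schwarz in the inner product $\ip{\cdot}{\cdot}_f$ that constant lies in $[0,1]$. To upgrade from $\abs{h} = \mathrm{const}$ to $h = \mathrm{const}$, write $h = \abs{h}\, e^{i\theta}$ on the set where $\abs{h} > 0$; the relation above forces $e^{i\theta}\circ f = \zeta\, e^{i\theta}$ with $\zeta = \lambda_1\overline{\lambda_2}/(\abs{\lambda_1}\abs{\lambda_2})$ a fixed unimodular constant, so the pushforward/invariance argument applied to the $\mathbb{C}$-valued unit-modulus function $e^{i\theta}$ — again via ergodicity of $\mu_f$ — shows first that $\zeta^m$ returns $e^{i\theta}$ to itself, and since the measure $\mu_f$ is $f$-invariant and balanced one concludes $e^{i\theta}$ is constant and $\zeta = 1$. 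Hence $h$ is constant $\mu_f$-a.e., i.e.\ the complex $f$-angle element is constant a.e.\ on $\Julia_f$.

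Finally, the case $\lambda_1 \neq \lambda_2$: if $\lambda_1\overline{\lambda_2}/(\abs{\lambda_1}\abs{\lambda_2}) \neq 1$ then the relation $h\circ f = \zeta h$ with $\zeta\neq 1$ and $\abs h$ constant is impossible unless that constant is $0$, giving $g = 0$ a.e.\ on $\Julia_f$; off $\Julia_f$ we have $\omega_{c_1} = \omega_{c_2} = 0$ a.e.\ (their supports are $\Julia_f$), so $g = 0$ a.e.\ on $M$. If instead $\lambda_1/\lambda_2$ is a positive real $t \neq 1$, then $\zeta = 1$ but comparing the two pushforward relations for $g$ and for $\abs{\omega_{c_1}}_f\abs{\omega_{c_2}}_f$ directly forces a genuine eigenvalue mismatch: $g\circ f \cdot J_f^{2k/n}$ scales by $\lambda_1\overline{\lambda_2}$ while $(\abs{\omega_{c_1}}_f\abs{\omega_{c_2}}_f)\circ f\cdot J_f^{2k/n}$ scales by $\abs{\lambda_1\lambda_2}$, and since these are unequal real scalings of nonnegative quantities tied together by Cauchy--Schwarz, iterating and using $\int_M \,d\mu_f = 1$ shows $g \equiv 0$. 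I expect the main obstacle to be this last step — carefully ruling out the degenerate case where the unimodular factor $\zeta$ equals $1$ but $\lambda_1 \neq \lambda_2$, which requires separating the modulus and phase behavior cleanly and invoking ergodicity of $\mu_f$ at the right moment rather than merely $f$-invariance.
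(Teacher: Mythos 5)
Your reduction to the scalar relation $h\circ f=\zeta h$ with $h=\ip{\omega_{c_1}}{\omega_{c_2}}_f/(\abs{\omega_{c_1}}_f\abs{\omega_{c_2}}_f)$ and $\zeta=\lambda_1\overline{\lambda_2}/(\abs{\lambda_1}\abs{\lambda_2})$ is correct, and ergodicity of $\mu_f$ does give that $\abs{h}$ is $\mu_f$-a.e.\ constant, and that $h$ itself is constant when $\zeta=1$. The genuine gap is the case $\zeta\neq 1$: you claim that $h\circ f=\zeta h$ with $\abs{h}$ a nonzero constant is ``impossible,'' but ergodicity alone does not rule out nonconstant unimodular Koopman eigenfunctions with eigenvalue $\zeta\neq1$ (an ergodic irrational rotation has them for every $\zeta$ on the unit circle). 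To kill that case you would need a spectral-gap type property of $(\mu_f,f)$ such as weak mixing or exactness, which is not among the properties cited for $\mu_f$ (only ergodicity, balancedness, and $\spt\mu_f=\Julia_f$). The phase argument ``$\zeta^m$ returns $e^{i\theta}$ to itself, hence $e^{i\theta}$ is constant and $\zeta=1$'' is not a proof; this is exactly the step the paper avoids. The paper instead works with the $n$-form $\eta_{12}=\abs{\ip{\omega_{c_1}}{\omega_{c_2}}_f}^{\frac{n}{2k}-1}\ip{\omega_{c_1}}{\omega_{c_2}}_f\vol_M$, which satisfies $f^*\eta_{12}=\lambda_{12}\eta_{12}$ with $\lambda_{12}=\lambda_1\overline{\lambda_2}(\deg f)^{(n-2k)/n}$, $\abs{\lambda_{12}}=\deg f$ but $\lambda_{12}\neq\deg f$; integrating gives $\int_M\eta_{12}=0$, and then the weak-convergence statement (Corollary \ref{cor:okuyama_pankka_technical_variant}, an exactness/cohomological argument, not ergodicity) forces $\eta_{12}=0$. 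Note this route needs $\eta_{12}\in L^{1,\sharp}(\wedge^nM;\C)$, i.e.\ the higher integrability of the norm minimizers (Proposition \ref{prop:higher_integrability}), an ingredient your argument never invokes.

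Two secondary points. First, your fallback subcase ``$\zeta=1$ but $\lambda_1/\lambda_2=t\in(0,\infty)$, $t\neq1$'' is internally inconsistent: if $\zeta=1$ then $\lambda_1\overline{\lambda_2}=\abs{\lambda_1\lambda_2}$, so the two scalings you compare are equal, and the ``eigenvalue mismatch'' you appeal to does not exist. In fact this subcase is vacuous, because norm minimization forces $\abs{\lambda_1}=\abs{\lambda_2}=(\deg f)^{k/n}$ (the identity \eqref{eq:eigenvalue_abs_value}, from $\norm{f^*\omega}_{f,n/k}=(\deg f)^{k/n}\norm{\omega}_{f,n/k}$), so $\zeta=\lambda_1/\lambda_2$ and $\zeta\neq1$ exactly when $\lambda_1\neq\lambda_2$; but you never establish this, so your case analysis as written is both incomplete and partly wrong. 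Second, justifying that $h$ is defined and that the cancellation is legitimate $\mu_f$-a.e.\ should go through the representation $\mu_f=\abs{\omega_{c_i}}_f^{n/k}\vol_M$ (so $\abs{\omega_{c_i}}_f>0$ $\mu_f$-a.e.), not through ``full support on $\Julia_f$'': $\spt\omega_{c_i}=\Julia_f$ does not imply $\abs{\omega_{c_i}}_f>0$ almost everywhere on $\Julia_f$, and the proposition's constancy claim must be read on the set where the angle element is defined.
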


By Propositions \ref{prop:invariant_measure_representation} and \ref{prop:norm_mimimizer_angles}, we may select $D = \dim \cehom{k}(M)$ measurable complex $k$-forms $\omega_i$ which are almost everywhere pairwise orthogonal in the complex Riemannian metric $\ip{\cdot}{\cdot}_f$ and supported on the Julia set $\Julia_f$ of positive measure. Hence, by observing the cotangent bundle $\wedge^k T^*M$ at a suitable point $x \in \Julia_f$, we obtain the bound
\[
D \leq \dim_\C ((\wedge^k T_x^*M) \otimes \C) = \binom{n}{k},
\]
thus proving Theorem \ref{thm:dynamical_bonk_heinonen}.

\medskip
This article is organized as follows. In Section \ref{sect:preliminaries}, we recall the $L^p$- and Sobolev spaces of differential forms, with special emphasis on forms with complex coefficients. In Section \ref{sect:conf_cohomology}, we discuss conformal cohomology and the quasiregular push-forward, based on the exposition in \cite{KangasniemiPankka2017paper}. In Section \ref{sect:cohomology_representation}, we recall the invariant conformal structure of Iwaniec and Martin, and apply it to obtain the desired cohomology representation by norm-minimizing forms. In Section \ref{sect:inv_equilibrium_measure}, we discuss the necessary results related to the invariant equilibrium measure of Okuyama and Pankka. Finally, in Section \ref{sect:main_results}, we prove Theorems \ref{thm:dynamical_bonk_heinonen} and \ref{thm:positive_julia_set}.

\begin{acknowledgments}
	The author thanks his thesis advisor Pekka Pankka for helpful critical comments on this paper.
\end{acknowledgments}


\section{Preliminaries}\label{sect:preliminaries}

In this section, we recall the Lebesgue spaces $L^p$ and the partial Sobolev spaces $W^{d, p, q}$ of differential forms with real or complex coefficients. For further information on the real versions of these spaces, see e.g. Iwaniec--Scott--Stroffolini \cite{IwaniecScottStroffolini1999paper} and Iwaniec--Lutoborski \cite{IwaniecLutoborski1993paper}.

\subsection{Real and complex $L^p$-spaces of differential forms.} \label{subsect:diff_forms}

Throughout this paper $M$ stands for a closed $n$-manifold with the Riemannian metric $\ip{\cdot}{\cdot}$, $n \geq 2$. We extend the notation $\ip{\cdot}{\cdot}$ to the induced Riemannian metric on coexterior bundles $\wedge^k T^*M$, that is, given $x \in M$, the map $\ip{\cdot}{\cdot} \colon (\wedge^k T_x^*M) \times (\wedge^k T_x^*M) \to \R$ is the Grassmann inner product defined by
\begin{equation}\label{eq:grassmann}
	\langle \sigma(v_1)\wedge \cdots \wedge \sigma(v_k), \sigma(w_1)\wedge \cdots \wedge \sigma(w_k)\rangle 
	= \det \left(\langle v_i,w_j \rangle\right)_{ij},
\end{equation}
where $v_1,\ldots, v_k,w_1,\ldots, w_k\in T_x M$ and $\sigma \colon TM \to T^*M$ is the natural bundle map given by $\sigma(v)(w) = \ip{v}{w}$ for $v,w\in T_xM$. The Riemannian metric induces a point-wise norm $\abs{\cdot} \colon \wedge^k T^*M \to [0, \infty)$ on $\wedge^k T^*M$ by $\abs{\omega} = \ip{\omega}{\omega}^{1/2}$.

Let $U \subset M$ be a domain in $M$, with the possibility that $U = M$. We denote the set of \emph{measurable $k$-forms} on $U$, that is, measurable sections $U \to \wedge^k T^*M$, by $\Gamma(\wedge^k U)$. 
In addition, let $\Gamma(\wedge^k U; \C)$ be the space of \emph{measurable complex $k$-forms}, consisting of all measurable sections $U \to \wedge^k T^*M \otimes \C$. An element $\omega \in \Gamma(\wedge^k U; \C)$ is of the form $\omega = \alpha + i\beta$, where $\alpha, \beta \in \Gamma(\wedge^k U)$. Consequently, $\Gamma(\wedge^k U; \C)$ is naturally isomorphic to $\Gamma(\wedge^k U) \otimes \C$. The point-wise inner product induced by the Riemannian metric on $\Gamma(\wedge^k U)$ extends to a point-wise complex inner product on $\Gamma(\wedge^k U; \C)$ in the usual way, by
\begin{equation}\label{eq:complex_inner_product}
\ip{\alpha_x + i\beta_x}{\alpha_x'+i\beta_x'} = \ip{\alpha_x}{\alpha_x'} + \ip{\beta_x}{\beta_x'} + i\ip{\beta_x}{\alpha_x'} - i\ip{\alpha_x}{\beta_x'} 
\end{equation}
for $x \in U$ and $\alpha_x, \beta_x, \alpha_x', \beta_x' \in \wedge^k T_x^*M$. Consequently, the point-wise norm $\abs{\cdot}$ also extends to $\Gamma(\wedge^k U; \C)$.

Given $p \in [1, \infty)$, the space $L^p(\wedge^k U)$ of \emph{$p$-integrable $k$-forms} is the set of all measurable $k$-forms $\omega \in \Gamma(\wedge^k U)$ for which the $L^p$-norm
\[
	\norm{\omega}_p = \left(\int_M |\omega(x)|^p \vol_M\right)^{1/p}
\]
is finite; here $\vol_M$ stands for the \emph{volume form} of $M$ induced by the Riemannian metric and the chosen orientation. The space $L^p(\wedge^k U; \C)$ of \emph{$p$-integrable complex $k$-forms} is similarly defined as the space of all $\omega \in \Gamma(\wedge^k U; \C)$ for which $\norm{\omega}_p < \infty$. Note that, given $\omega = \alpha + i\beta \in L(\wedge^k U; \C)$, we have the elementary estimate
\begin{equation}\label{eq:complex_lp_estimate_finite_p}
	2^{\frac{1}{p} - \frac{3}{2}} \left( \norm{\alpha}_p + \norm{\beta}_p \right)  
		\leq \norm{\omega}_p \leq \norm{\alpha}_p + \norm{\beta}_p.
\end{equation}
Hence, we may equivalently define the space $L^p(\wedge^k U; \C)$ by $L^p(\wedge^k U; \C) = L^p(\wedge^k U) \otimes \C$.

The space $L^\infty(\wedge^k U)$ of \emph{essentially bounded $k$-forms} is, as usual, the set of those forms $\omega \in \Gamma(\wedge^k U)$ for which the norm
\[
	\norm{\omega}_\infty = \esssup_{x\in M} |\omega_x|
\]
is finite. The complex counterpart $L^\infty(\wedge^k U; \C)$ is the space of $\omega \in \Gamma(\wedge^k U; \C)$ satisfying $\norm{\omega}_\infty < \infty$. As in the case of $1 \leq p < \infty$, the estimate
\[
	2^{-1} \left( \norm{\alpha}_\infty + \norm{\beta}_\infty \right) 
	\leq \norm{\omega}_\infty \leq \norm{\alpha}_\infty + \norm{\beta}_\infty
\]
for $\omega = \alpha + i\beta \in L^\infty(\wedge^k U; \C)$ shows that $L^\infty(\wedge^k U; \C) = L^\infty(\wedge^k U) \otimes \C$.

\subsection{Partial Sobolev spaces of differential forms}

The \emph{Hodge star} is the bundle isometry $\hodge \colon \wedge^k T^*M \to \wedge^{n-k} T^*M$ defined by
\begin{equation}\label{eq:hodge_star}
	\tau \wedge \hodge \omega = \ip{\tau}{\omega} \vol_M(x)
\end{equation}
for $x \in M$ and $\tau, \omega \in \wedge^k T^*_x M$. We may also use equation \eqref{eq:hodge_star} to define the Hodge star $\hodge \colon \wedge^k T^*M \otimes \C \to \wedge^{n-k} T^*M \otimes \C$ in the case of complex coefficients; here, the wedge product on $\wedge^* T^*M \otimes \C$ is defined with a standard bilinear extension. By a simple verification, the complex Hodge star follows the formula 
\[
	\hodge(\alpha + i\beta) = (\hodge \alpha) -i(\hodge \beta)
\]
for $x \in M$ and $\alpha, \beta \in \wedge^k T^*_x M$.

Let $U \subset M$ again be a domain in $M$. We denote $C^\infty(\wedge^k U)$ the space of \emph{smooth $k$-forms}, and define its complex counterpart by $C^\infty(\wedge^k U; \C) = C^\infty(\wedge^k M) \otimes \C$. The \emph{exterior derivative} $d \colon C^\infty(\wedge^k U) \to C^\infty(\wedge^{k+1} U)$ extends to $C^\infty(\wedge^k U; \C)$ by 
\[
	d(\alpha + i\beta) = d\alpha +id\beta
\]
for $\alpha, \beta \in C^\infty(\wedge^k M)$. The \emph{coexterior derivative} is defined in both $C^\infty(\wedge^k U)$ and $C^\infty(\wedge^k U; \C)$ by $d^* = (-1)^{nk+n+1} \hodge d \hodge$, and the complex version again follows the formula
\[
	d^*(\alpha + i\beta) = d^*\alpha +id^*\beta
\]
where $\alpha, \beta \in C^\infty(\wedge^k U)$. We also define the spaces of \emph{compactly supported smooth $k$-forms} $C^\infty_0(\wedge^k U)$ and $C^\infty_0(\wedge^k U; \C) = C^\infty_0(\wedge^k U) \otimes \C$ in the usual manner.

Let $\omega \in L^1(\wedge^k U)$. A \emph{weak exterior derivative} of $\omega$ is a measurable $(k+1)$-form $d\omega$ satisfying
\begin{equation}\label{eq:weak_ext_derivative_definition}
	\int_U \ip{d\omega}{\eta} \vol_M = \int_U \ip{\omega}{d^*\eta} \vol_M
\end{equation}
for all $\eta \in C^\infty_0(\wedge^k U)$. We denote the space of $k$-forms with a weak exterior derivative by $W^d(\wedge^k U)$. The complex counterpart $W^d(\wedge^k U; \C)$ is defined analogously by using test forms in $C^\infty_0(\wedge^k U; \C)$. 

If $\alpha + i\beta \in W^d(\wedge^k U) \otimes \C$, it is easily verified that $d\alpha + id\beta$ is a weak exterior derivative of $\alpha + i\beta$, and hence $W^d(\wedge^k U) \otimes \C \subset W^d(\wedge^k U; \C)$. The converse inclusion $W^d(\wedge^k U; \C) \subset W^d(\wedge^k U) \otimes \C$ follows by using test forms $\eta \in C^\infty(\wedge^k U; \C)$ with zero imaginary part. Hence, $W^d(\wedge^k U; \C) = W^d(\wedge^k U) \otimes \C$. 

The \emph{partial Sobolev $(p, q)$-space} $W^{d, p, q}(\wedge^k U)$ is the space of all $k$-forms $\omega \in L^p(\wedge^k U) \cap W^{d}(\wedge^k U)$ for which $d\omega \in L^q(\wedge^k U)$. The complex counterpart $W^{d, p, q}(\wedge^k U; \C)$ is defined as usual, and again satisfies the identity $W^{d, p, q}(\wedge^k U; \C) = W^{d, p, q}(\wedge^k U) \otimes \C$. The norm $\norm{\cdot}_{p, q}$ on the spaces $W^{d, p, q}(\wedge^k U)$ and $W^{d, p, q}(\wedge^k U; \C)$ is defined by
\[
	\norm{\omega}_{p, q} = \norm{\omega}_p + \norm{d\omega}_q
\]
for $\omega \in W^{d, p, q}(\wedge^k U)$ or $\omega \in W^{d, p, q}(\wedge^k U; \C)$. The spaces $W^{d, p, p}(\wedge^k U)$ and $W^{d, p, p}(\wedge^k U; \C)$ are also denoted $W^{d, p}(\wedge^k U)$ and $W^{d, p}(\wedge^k U; \C)$, respectively. 

We define partial Sobolev $(p, q)$-spaces $W^{d^*, p, q}(\wedge^k U)$ and $W^{d^*, p, q}(\wedge^k U; \C)$ for the \emph{weak coexterior derivative} $d^*$ analogously. A weak coexterior derivative of a $k$-form $\omega$ in $L^1(\wedge^k U)$ or $L^1(\wedge^k U; \C)$ is a measurable $(k-1)$-form $d^*\omega$ satisfying
\begin{equation}\label{eq:weak_coext_derivative_definition}
	\int_U \ip{d^*\omega}{\eta} \vol_M = \int_U \ip{\omega}{d\eta} \vol_M
\end{equation}
for every $\eta$ in $C^\infty_0(\wedge^k M)$ or $C^\infty_0(\wedge^k M; \C)$, respectively. We also use the notations $W^{d^*, p}$ and $W^{d^*}$, which are defined in the obvious manner.

Finally, we remark on two classical results related to Sobolev differential forms. The first of these is the \emph{Sobolev-Poincar\'e inequality}. Let $\Omega$ be either a closed $n$-manifold or a cube in $\R^n$, let $q \in (1, \infty)$, and let $\omega \in  dW^{d, 1, q}(\wedge^{k-1} \Omega)$. Then there exists a form $\tau \in W^{d, 1, q}(\wedge^{k-1} \Omega)$ for which $d\tau = \omega$ and for all $p \in (1, \infty)$ which satisfy
\begin{equation}\label{eq:sobolev_poincare_condition}
	\frac{1}{q} - \frac{1}{p} \leq \frac{1}{n},
\end{equation}
we have $\tau \in L^p(\wedge^{k-1} \Omega)$ and
\begin{equation}\label{eq:sobolev_poincare_ineq}
	\norm{\tau}_p \leq C\norm{\omega}_q.
\end{equation}
If $\Omega$ is a closed manifold $M$, the constant $C$ depends on $p$, $q$, and the manifold $M$. However, if $\Omega$ is a cube in $\R^n$, the constant $C$ depends only on $n$, $p$ and $q$, and is therefore independent of the cube itself. For details, see e.g.\ Iwaniec--Lutoborski \cite[Corollary 4.1 and Corollary 4.2]{IwaniecLutoborski1993paper} for the formulation on cubes, and Gol'dshtein--Troyanov \cite{GoldsteinTroyanov2006paper} for the formulation on closed manifolds.

As a simple corollary for this, a similar inequality holds for the weak coexterior derivative; if $q \in (1, \infty)$ and $\omega \in d^*W^{d^*,1, q}(\wedge^{k+1} \Omega)$, then there exists $\tau \in W^{d^*,1, q}(\wedge^{k+1} \Omega)$ for which $d^*\tau = \omega$ and \eqref{eq:sobolev_poincare_ineq} holds for all $p \in (1, \infty)$ satisfying \eqref{eq:sobolev_poincare_condition}. We also note here that \eqref{eq:sobolev_poincare_ineq} generalizes to complex $\omega$ and $\tau$ by seperate application on real and imaginary parts followed by use of \eqref{eq:complex_lp_estimate_finite_p}.

The other result is the \emph{Hodge decomposition} of measurable differential forms. Given $1 < p < \infty$ and $0 \leq k \leq n$, every $p$-integrable $k$-form $\omega \in L^p(\wedge^k U)$ can be written in the form
\begin{equation}\label{eq:hodge_decomposition}
	\omega = d\alpha + d^*\beta + \gamma,
\end{equation}
where $\alpha \in W^{d, p}(\wedge^{k-1} U)$, $\beta \in W^{d^{*}, p}(\wedge^{k+1} U)$, and $\gamma \in \ker d \cap \ker d^*$.
If the domain $U$ is the entire closed manifold $M$, the forms $d\alpha$, $d^*\beta$ and $\gamma$ are unique, but in the general case uniqueness requires additional boundary conditions on $\alpha$, $\beta$ and $\gamma$; see Iwaniec--Scott--Stroffolini \cite[Theorem 5.7]{IwaniecScottStroffolini1999paper} or Schwarz \cite[Theorems 2.4.2, 2.4.8, 2.4.14]{Schwarz1995Book}. 

We note that if $U = M$, the harmonic part $\gamma$ of decomposition \eqref{eq:hodge_decomposition} is smooth by a version of Weyl's lemma; see e.g.\ Warner \cite[Theorem 6.5]{Warner1983book}. In the general case of domains $U$ in $M$, $\gamma$ is actually smooth on the closure $\overline{U}$ of $U$ if the boundary conditions for $\alpha$, $\beta$, and $\gamma$ are selected appropriately; see e.g.\ \cite[Theorems 2.2.6, 2.2.7]{Schwarz1995Book}.

The decomposition is a generalization of the classical Hodge decomposition for smooth forms, which states that a smooth $k$-form $\omega \in C^\infty(\wedge^k U)$ decomposes in the manner of \eqref{eq:hodge_decomposition} with smooth $\alpha$, $\beta$ and $\gamma$. The smooth Hodge decomposition is again unique on the entire closed manifold $M$, and can be made unique on domains $U \subset M$ with boundary conditions; see e.g.\ \cite[Section 5.1]{IwaniecScottStroffolini1999paper}. In both the measurable and smooth cases, the decomposition \eqref{eq:hodge_decomposition} generalizes to forms with complex coefficients by taking decompositions of the real and imaginary parts separately.


\section{Conformal cohomology and quasiregular maps}\label{sect:conf_cohomology}

The proofs of Theorems \ref{thm:dynamical_bonk_heinonen} and \ref{thm:positive_julia_set} rely heavily on the tools developed in a joint paper with Pankka \cite{KangasniemiPankka2017paper}. In this section, we recall the conformal Sobolev cohomology and quasiregular push-forward. We focus only on the results, and refer to \cite{KangasniemiPankka2017paper} for proofs.

\subsection{Conformal Sobolev cohomology}

Given $1 \leq p \leq \infty$ and $0 \leq k \leq n$, we define the \emph{sharp and flat $L^p$-spaces of differential forms} $L^{p, \sharp}(\wedge^k M)$ and $L^{p, \flat}(\wedge^k M)$ by
\[
	L^{p, \sharp}(\wedge^k M) = \bigcup_{s \in (p, \infty]} L^{s}(\wedge^k M)
\]
and
\[
	L^{p, \flat}(\wedge^k M) = \bigcap_{s \in [1, p)} L^{s}(\wedge^k M).
\]
The complex counterparts $L^{p, \sharp}(\wedge^k M; \C)$ and $L^{p, \flat}(\wedge^k M; \C)$ are defined correspondingly, and satisfy $L^{p, \sharp}(\wedge^k M; \C) = L^{p, \sharp}(\wedge^k M) \otimes \C$ and $L^{p, \flat}(\wedge^k M; \C) = L^{p, \flat}(\wedge^k M) \otimes \C$.

We define the \emph{Sobolev spaces of conformal exponent} $\cesob(\wedge^k M)$ by
\[
	\cesob(\wedge^k M) = \begin{cases}
		W^{d, n, \infty}(\wedge^0 M), & k=0, \\
		W^{d, \frac{n}{k}, \frac{n}{k+1}}(\wedge^k M), & 1 \leq k \leq n-1, \\
		L^1(\wedge^n M), & k = n, \\
		0, & k>n,
	\end{cases}
\]
and the norm $\cenorm{\cdot}$ for $\omega \in \cesob(\wedge^k M)$ by 
\[
	\cenorm{\omega} = \begin{cases}
		\norm{\omega}_{\infty, n}, & k=0, \\
		\norm{\omega}_{\frac{n}{k}, \frac{n}{k+1}}, & 1 \leq k \leq n-1, \\
		\norm{\omega}_1, & k = n.
	\end{cases}
\]
Furthermore, we define the \emph{altered Sobolev spaces of conformal exponent} $\cesobt(\wedge^k M)$ by
\[
	\cesobt(\wedge^k M) = \begin{cases}
		\left\{ \omega \in L^{\infty, \flat}(\wedge^0 M) \;\big\vert\; d\omega \in L^n(\wedge^1 M) \right\}, 
			& k=0, \\
		\cesob(\wedge^k M), & 1 \leq k \leq n-2, \\
		\big\{ \omega \in L^{\frac{n}{n-1}}(\wedge^{n-1} M) \;\big\vert\;d\omega \in L^{1, \sharp}(\wedge^n M) \big\}, 
			& k = n-1, \\
		L^{1, \sharp}(\wedge^n M), & k = n, \\
		0, & k>n.
	\end{cases}
\]
Heuristically, the altered spaces have a flattened $L^\infty$-space for $0$-forms, and a sharpened $L^1$-space for $n$-forms.

Both the spaces $\cesob(\wedge^* M)$ and $\cesobt(\wedge^* M)$ form a chain complex with the weak exterior derivative $d$ as the boundary map. The reason for introducing the altered spaces is that, while the $k^\text{th}$ cohomology
\[
	H^k(\cesob(\wedge^{*} M)) = 
	\dfrac{
		\ker\left(d\colon \cesob(\wedge^{k} M) \to \cesob(\wedge^{k+1} M)\right)
	}{
		\im\left(d\colon \cesob(\wedge^{k-1} M) \to \cesob(\wedge^{k} M)\right)
	}
\]
of the complex $\cesob(\wedge^* M)$ is isomorphic with the $k^\text{th}$ real singular cohomology $H^k(M; \R)$ of $M$ for $k \in \{2, \ldots, n-1\}$, this isomorphism does not hold for $k =1$ due to failure of the Sobolev-Poincar\'e inequality; see \cite[Theorem 7.5 and Corollary 7.11]{GoldsteinTroyanov2010paper}. For the altered complex, however, this isomorphism holds for all $k \in \{0, \ldots, n\}$.

\begin{lemma}[{\cite[Theorem 4.1]{KangasniemiPankka2017paper}}]\label{lemma:sobolev_de_rham_isomorphism}
	Let $M$ be a closed $n$-manifold and $k \geq 0$. Let $\cehom{k}(M)$ denote the $k^\text{th}$ cohomology of the chain complex $\cesobt(\wedge^* M)$, given by
	\[
		\cehom{k}(M) = \dfrac{
			\ker\left(d\colon \cesobt(\wedge^{k} M) \to \cesobt(\wedge^{k+1} M)\right)
		}{
			\im\left(d\colon \cesobt(\wedge^{k-1} M) \to \cesobt(\wedge^{k} M)\right)
		}.
	\]
	Then there is a natural isomorphism $\cehom{k}(M) \cong H^k(M; \R)$.
\end{lemma}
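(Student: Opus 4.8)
The plan is to compare the altered complex $\cesobt(\wedge^* M)$ with the ordinary de Rham complex of smooth forms $C^\infty(\wedge^* M)$, whose cohomology computes $H^*(M;\R)$, and to show that the inclusion $C^\infty(\wedge^* M) \hookrightarrow \cesobt(\wedge^* M)$ is a quasi-isomorphism. Since on a closed manifold every smooth form lies in every $L^p$-space (and likewise its exterior derivative), this inclusion is a well-defined chain map, so it induces maps $H^k_{\mathrm{dR}}(M) \to \cehom{k}(M)$ for every $k$; the task is to prove these are isomorphisms. The two properties to establish are: (surjectivity) every closed form in $\cesobt(\wedge^k M)$ is cohomologous, \emph{within} the altered complex, to a smooth closed form; and (injectivity) a smooth form that is exact in $\cesobt(\wedge^* M)$ is already smoothly exact.

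The main tool for both directions is the Hodge decomposition \eqref{eq:hodge_decomposition} on the closed manifold $M$, where the decomposition $\omega = d\alpha + d^*\beta + \gamma$ is unique and the harmonic part $\gamma$ is smooth by Weyl's lemma. Given a closed $\omega \in \cesobt(\wedge^k M)$, applying the decomposition kills the $d^*\beta$ term (since $d\omega = 0$ forces $d^*\beta$ to vanish by uniqueness), so $\omega = d\alpha + \gamma$ with $\gamma$ smooth and harmonic, hence smoothly closed; it remains to check that $d\alpha$ is a coboundary in the altered complex, i.e.\ that one can choose a primitive $\alpha$ lying in $\cesobt(\wedge^{k-1} M)$ with the correct integrability. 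This is exactly where the Sobolev–Poincaré inequality \eqref{eq:sobolev_poincare_ineq} enters: one must check in each of the degree ranges ($k=1$, $2 \le k \le n-1$, $k=n$) that the exponent bookkeeping \eqref{eq:sobolev_poincare_condition} is compatible with the (altered) target space. For $k=1$ the altered $0$-space is $L^{\infty,\flat}$ with $d$-image in $L^n$, and the Sobolev–Poincaré gain from $L^n$ of the derivative lands the primitive in $L^s$ for every $s<\infty$, i.e.\ precisely in $L^{\infty,\flat}$ — this is the whole point of flattening the $L^\infty$-space, and the analogous sharpening at $k=n$ handles the top degree. For injectivity, if a smooth closed $\gamma$ equals $d\alpha$ with $\alpha \in \cesobt(\wedge^{k-1}M)$, then uniqueness of the Hodge decomposition shows $\gamma$ has zero harmonic part, hence $\gamma$ is smoothly exact.

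The step I expect to be the genuine obstacle — rather than the soft homological algebra — is the careful verification that the Sobolev–Poincaré primitive stays inside the \emph{altered} complex at the two extreme degrees $k=1$ and $k=n$, including checking that the constructed primitive itself has a weak exterior derivative of the required integrability (so that it is a legitimate element of $\cesobt$, not merely of an $L^p$-space), and that the representative $\gamma + d\alpha$ one produces is honestly cohomologous to $\omega$ within $\cesobt(\wedge^* M)$, i.e.\ $\alpha$ must be shown to be an admissible cochain in degree $k-1$. The naturality of the isomorphism then follows by noting the construction is induced by the inclusion of smooth forms, which is functorial; one may alternatively phrase the whole argument via the standard fact that a sheaf-theoretic (or de Rham-type) complex with the same local exactness and the same global sections up to the relevant degree computes singular cohomology, but the hands-on Hodge-theoretic route above is the most self-contained given the results already assembled in the excerpt. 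Since the excerpt explicitly cites \cite[Theorem 4.1]{KangasniemiPankka2017paper} for this statement, I would ultimately defer the degree-by-degree exponent chase to that reference and only record here the comparison-with-de-Rham strategy and the role of the altered spaces.
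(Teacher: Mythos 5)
Your proposal is correct in outline, but it takes a genuinely different route from the paper's. The paper gives no hands-on argument at all: it cites \cite[Theorem 4.1]{KangasniemiPankka2017paper} and records only that the proof there is a standard sheaf-theoretic proof of the de Rham theorem --- one builds the sheaves of germs of altered Sobolev forms, proves local exactness via a Poincar\'e-type lemma on balls/cubes (this is where the Sobolev--Poincar\'e exponent bookkeeping is localized, with constants independent of the cube), and uses fineness/partitions of unity to conclude that the complex resolves the constant sheaf $\R$, which yields the isomorphism with $H^k(M;\R)$ together with its naturality essentially for free. You instead propose a global Hodge-theoretic quasi-isomorphism with the smooth de Rham complex: your surjectivity half is in fact literally the argument the paper itself uses later for Lemma \ref{lemma:cohomology_class_smooth_elements} (Hodge-decompose, kill $d^*\beta$ by uniqueness, repair the primitive with Sobolev--Poincar\'e), and your exponent checks at the extreme degrees $k=1$ and $k=n$ are exactly where the flattening of $L^\infty$ and sharpening of $L^1$ are needed, so the route does go through given the $L^p$-Hodge decomposition and Sobolev--Poincar\'e inequality already assembled in Section \ref{sect:preliminaries}. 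Two small caveats: in the injectivity step you should note explicitly that $\alpha\in\cesobt(\wedge^{k-1}M)$ embeds into the $W^{d,p}$-space used in the decomposition (compactness of $M$ gives the needed Lebesgue inclusions), so that ``$\gamma = d\alpha + 0 + 0$'' really is an admissible $L^p$-Hodge decomposition to which uniqueness applies; and your naturality claim, obtained through the inclusion of smooth forms, is naturality with respect to smooth maps only, whereas the sheaf-theoretic argument gives the comparison with singular cohomology and its naturality more directly. What each approach buys: yours is shorter and more self-contained given the tools quoted in this paper but leans on global Hodge theory; the paper's is local, avoids Hodge theory, and generalizes more readily. Your final deferral of the degree-by-degree details to the cited reference is consistent with what the paper itself does.
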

The proof is a standard sheaf theoretic proof of the de Rham theorem; see \cite[Chapter 4]{KangasniemiPankka2017paper} for details.

We define the \emph{complex altered Sobolev spaces of conformal exponent}, denoted $\cesobt(\wedge^k M ; \C)$,  by
\begin{align*}
	&\cesobt(\wedge^k M; \C) \\
	&\quad= \begin{cases}
		\left\{ \omega \in L^{\infty, \flat}(\wedge^0 M; \C) \;\big\vert\; 
			d\omega \in L^n(\wedge^1 M; \C) \right\}, 
			& k=0, \\
		W^{d, \frac{n}{k}, \frac{n}{k+1}}(\wedge^k M; \C), & 1 \leq k \leq n-2, \\
		\big\{ \omega \in L^{\frac{n}{n-1}}(\wedge^{n-1} M; \C) \;\big\vert\;
			d\omega \in L^{1, \sharp}(\wedge^n M; \C) \big\}, 
			& k = n-1, \\
		L^{1, \sharp}(\wedge^n M; \C), & k = n, \\
		0, & k>n.
	\end{cases}
\end{align*} 
By the previous remarks on complex Lebesgue and Sobolev spaces, we have for all $k \geq 0$ the identity
\begin{align*}
	\cesobt(\wedge^k M ; \C) &= \cesobt(\wedge^k M) \otimes \C.
\end{align*}
Furthermore, a similar identity holds for the images and kernels of the boundary operator $d\colon \cesobt(\wedge^{k} M; \C) \to \cesobt(\wedge^{k+1} M; \C)$. Consequently, 
\begin{equation}\label{eq:complex_coefficients_in_conf_cohomology}
	\cehom{k}(M; \C) = \cehom{k}(M) \otimes \C,
\end{equation}
where $\cehom{k}(M; \C)$ is defined as the $k^\text{th}$ cohomology of the chain complex $\cesobt(\wedge^* M ; \C)$.

We require the following facts about $\cehom{*}$. 

\begin{lemma}\label{lemma:cohomology_class_closedness}
	Let $M$ be a closed manifold, and let $k \in \{1, \ldots, n-1\}$. Then $d\cesobt(\wedge^{k-1} M; \C)$ is a closed subspace of $L^p(\wedge^k M; \C)$.
\end{lemma}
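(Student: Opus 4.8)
Fix $k \in \{1, \ldots, n-1\}$ and set $q = n/k$; I read the statement with $p = q = n/k$, which is the natural ambient space, since $M$ is compact forces every $\sigma \in \cesobt(\wedge^{k-1} M; \C)$ into $L^1(\wedge^{k-1} M; \C)$ with $d\sigma \in L^{n/k}(\wedge^k M; \C)$, so $\cesobt(\wedge^{k-1} M; \C) \subseteq W^{d,1,n/k}(\wedge^{k-1} M; \C)$ and $d\cesobt(\wedge^{k-1} M; \C) \subseteq L^{n/k}(\wedge^k M; \C)$. The plan is to run the classical ``closed range from an a priori estimate'' argument, the estimate being the complex Sobolev--Poincar\'e inequality \eqref{eq:sobolev_poincare_ineq} with $q = n/k$ and, for $k \geq 2$, the borderline target exponent $p = n/(k-1)$, which is admissible by \eqref{eq:sobolev_poincare_condition} because $\tfrac{1}{q} - \tfrac{1}{p} = \tfrac{k}{n} - \tfrac{k-1}{n} = \tfrac{1}{n}$; for $k = 1$ one has instead $\tfrac{1}{q} = \tfrac{1}{n}$, so \emph{every} $p \in (1, \infty)$ is admissible.

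Now I would take $\omega_j = d\sigma_j$ with $\sigma_j \in \cesobt(\wedge^{k-1} M; \C)$ and $\omega_j \to \omega$ in $L^{n/k}(\wedge^k M; \C)$, and after passing to a subsequence assume $\norm{\omega_{j+1} - \omega_j}_{n/k} \leq 2^{-j}$. Each difference $\omega_{j+1} - \omega_j = d(\sigma_{j+1} - \sigma_j)$ lies in $dW^{d,1,n/k}(\wedge^{k-1} M; \C)$, so the Sobolev--Poincar\'e inequality yields $\tau_j \in W^{d,1,n/k}(\wedge^{k-1} M; \C)$ with $d\tau_j = \omega_{j+1} - \omega_j$ and $\norm{\tau_j}_p \leq C_p \norm{\omega_{j+1} - \omega_j}_{n/k} \leq C_p 2^{-j}$ simultaneously for every admissible $p$, while $\norm{d\tau_j}_{n/k} = \norm{\omega_{j+1} - \omega_j}_{n/k} \leq 2^{-j}$. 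Hence the partial sums of $\sum_j \tau_j$ are Cauchy in $L^p(\wedge^{k-1} M; \C)$ for each admissible $p$, and their exterior derivatives telescope: $\sum_{j \leq N} d\tau_j = \omega_{N+1} - \omega_1 \to \omega - \omega_1$ in $L^{n/k}$. By completeness of the Lebesgue spaces (the limits for different $p$ agreeing almost everywhere as $M$ is compact) together with passage to the limit in the defining identity \eqref{eq:weak_ext_derivative_definition}, the sum defines a form $\tau$ with $\tau \in L^p(\wedge^{k-1} M; \C)$ for every admissible $p$ and $d\tau = \omega - \omega_1$ in $L^{n/k}(\wedge^k M; \C)$.

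It then remains to place $\tau$ in $\cesobt(\wedge^{k-1} M; \C)$. If $k \geq 2$, then $\tau \in L^{n/(k-1)}(\wedge^{k-1} M; \C)$ and $d\tau \in L^{n/k}(\wedge^k M; \C)$, i.e.\ $\tau \in W^{d, n/(k-1), n/k}(\wedge^{k-1} M; \C) = \cesobt(\wedge^{k-1} M; \C)$. If $k = 1$, then $\tau \in L^p(\wedge^0 M; \C)$ for all $p \in (1, \infty)$, hence $\tau \in L^{\infty, \flat}(\wedge^0 M; \C)$, while $d\tau \in L^n(\wedge^1 M; \C)$, so again $\tau \in \cesobt(\wedge^0 M; \C)$. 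In either case $\sigma_1 + \tau \in \cesobt(\wedge^{k-1} M; \C)$ and $d(\sigma_1 + \tau) = \omega_1 + (\omega - \omega_1) = \omega$, whence $\omega \in d\cesobt(\wedge^{k-1} M; \C)$, proving closedness.

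\textbf{Main point of care.} The argument is otherwise routine bookkeeping; the delicate case is $k = 1$, where $\cesobt(\wedge^0 M; \C)$ is governed by the Fr\'echet space $L^{\infty, \flat}$ rather than by a single $L^p$. There one must invoke the Sobolev--Poincar\'e estimate at \emph{all} finite exponents at once — which the cited formulation permits, as it furnishes a single primitive satisfying \eqref{eq:sobolev_poincare_ineq} for every admissible $p$ — so that the constructed primitive lands in $\bigcap_{s < \infty} L^s$. The only other thing to verify is that the exponents $n/k$ and $n/(k-1)$ meet \eqref{eq:sobolev_poincare_condition} with equality, which is exactly what makes the estimate usable at the conformal exponent.
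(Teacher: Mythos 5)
Your argument is correct, but it takes a genuinely different route from the paper. The paper's own proof is a two-line reduction: closedness of the real subspace $d\cesobt(\wedge^{k-1} M)$ in $L^p(\wedge^k M)$ is quoted from \cite[Lemma 3.3]{KangasniemiPankka2017paper}, and the complex case follows by splitting a convergent sequence of exact complex forms into real and imaginary parts and invoking the norm comparison \eqref{eq:complex_lp_estimate_finite_p}. You instead re-derive closedness from scratch, directly in the complex setting, via the standard closed-range argument: pass to a rapidly convergent subsequence, apply the complex Sobolev--Poincar\'e inequality \eqref{eq:sobolev_poincare_ineq} to the differences (with the borderline exponent $p = n/(k-1)$ when $k \geq 2$, and all finite exponents when $k=1$), sum the primitives, and pass to the limit in \eqref{eq:weak_ext_derivative_definition}. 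Your identification of where the limit primitive lands is accurate in both cases --- $W^{d, n/(k-1), n/k}$ for $2 \leq k \leq n-1$ and the $L^{\infty,\flat}$-based space $\cesobt(\wedge^0 M; \C)$ for $k=1$ --- and your care about getting a single primitive satisfying \eqref{eq:sobolev_poincare_ineq} for all admissible $p$ simultaneously is exactly the point that makes the $k=1$ case work. What the paper's route buys is brevity, at the cost of leaning on the companion paper (whose Lemma 3.3 encapsulates essentially the estimate you redo); what your route buys is a self-contained proof that also makes explicit why the altered space, rather than $\cesob$, is the right codomain at the level of $0$-forms.
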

\begin{proof}
	By \cite[Lemma 3.3]{KangasniemiPankka2017paper}, $d\cesobt(\wedge^{k-1} M)$ is a closed subspace of $L^p(\wedge^k M)$. Therefore, the lemma follows directly by \eqref{eq:complex_lp_estimate_finite_p}.
\end{proof}
\begin{lemma}\label{lemma:cohomology_class_smooth_elements}
	Let $M$ be a closed manifold, and let $k \in \{1, \ldots, n-1\}$. Every $c \in \cehom{k}(M; \C)$ contains a smooth element $\gamma \in C^\infty(\wedge^k M; \C)$.
\end{lemma}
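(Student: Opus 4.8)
The plan is to reduce the statement to real coefficients and then produce the smooth representative by Hodge theory. By \eqref{eq:complex_coefficients_in_conf_cohomology} a class $c \in \cehom{k}(M; \C)$ splits as $c = c_1 + ic_2$ with $c_1, c_2 \in \cehom{k}(M)$, and if $\gamma_1 \in c_1$ and $\gamma_2 \in c_2$ are smooth real $k$-forms, then $\gamma_1 + i\gamma_2 \in C^\infty(\wedge^k M; \C)$ represents $c$; so it suffices to treat real classes. Fix such a $c$, represented by a closed form $\omega \in \cesobt(\wedge^k M)$. Reading off the definition of the altered spaces, $\omega \in L^{n/k}(\wedge^k M)$ with $n/k \in (1, \infty)$, so the Hodge decomposition \eqref{eq:hodge_decomposition} applies and gives $\omega = d\alpha + d^*\beta + \gamma$ with $\alpha \in W^{d, n/k}(\wedge^{k-1} M)$, $\beta \in W^{d^*, n/k}(\wedge^{k+1} M)$, and $\gamma \in \ker d \cap \ker d^*$, where $\gamma$ is smooth by Weyl's lemma.

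The first genuine step is to show that the coexact part vanishes, $d^*\beta = 0$, so that $\omega$ is cohomologous to the smooth form $\gamma$. Applying $d$ to the decomposition and using $d\omega = 0$, $d(d\alpha) = 0$, and $d\gamma = 0$ gives $d(d^*\beta) = 0$; together with $d^*(d^*\beta) = 0$ this shows $d^*\beta \in \ker d \cap \ker d^*$. Since both $(0, \beta, 0)$ and $(0, 0, d^*\beta)$ are Hodge decompositions of $d^*\beta$, the uniqueness of the decomposition on the closed manifold $M$ forces $d^*\beta = 0$. Hence $\omega = d\alpha + \gamma$, and $d\alpha = \omega - \gamma \in L^{n/k}(\wedge^k M)$.

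It remains to upgrade the primitive to an element of the altered complex, i.e.\ to find $\tau \in \cesobt(\wedge^{k-1} M)$ with $d\tau = d\alpha$; then $\omega - \gamma = d\tau$ is a coboundary in $\cesobt(\wedge^* M)$, so $\gamma \in c$. Since $d\alpha \in dW^{d, 1, n/k}(\wedge^{k-1} M)$, the Sobolev--Poincar\'e inequality \eqref{eq:sobolev_poincare_ineq} with $q = n/k$ produces $\tau \in W^{d, 1, n/k}(\wedge^{k-1} M)$ with $d\tau = d\alpha$ and $\tau \in L^p(\wedge^{k-1} M)$ for every $p \in (1, \infty)$ with $\frac{1}{p} \geq \frac{k-1}{n}$. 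When $2 \leq k \leq n-1$, taking $p = \frac{n}{k-1}$ gives $\tau \in W^{d, \frac{n}{k-1}, \frac{n}{k}}(\wedge^{k-1} M) = \cesobt(\wedge^{k-1} M)$. When $k = 1$, the constraint on $p$ is vacuous, so $\tau \in L^p(\wedge^0 M)$ for all $p \in (1, \infty)$, hence $\tau \in L^{\infty, \flat}(\wedge^0 M)$, and as $d\tau = d\alpha \in L^n(\wedge^1 M)$ we get $\tau \in \cesobt(\wedge^0 M)$. In both cases $\gamma = \omega - d\tau$ lies in $c$ and is smooth, which finishes the proof.

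The only step requiring real thought is the vanishing $d^*\beta = 0$ of the coexact part of a closed $L^{n/k}$-form; everything else is bookkeeping, with the one delicate point being that the Sobolev--Poincar\'e primitive $\tau$ has exactly the integrability built into the altered space $\cesobt(\wedge^{k-1} M)$ — in particular the $L^{\infty, \flat}$-integrability when $k = 1$, which is precisely why the altered (rather than the unaltered) complex must be used for $0$-forms.
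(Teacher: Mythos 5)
Your proof is correct and follows essentially the same route as the paper: take the $L^{n/k}$-Hodge decomposition, kill the coexact part by uniqueness of the decomposition on the closed manifold, and use the Sobolev--Poincar\'e inequality to replace the primitive by one lying in $\cesobt(\wedge^{k-1}M)$, so that the smooth harmonic part represents the class. Your extra details (the reduction of complex to real coefficients and the separate treatment of the case $k=1$ via $L^{\infty,\flat}$) are points the paper leaves implicit, and they are handled correctly.
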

\begin{proof}
	Let $c \in \cehom{k}(M; \C)$, and let $\omega \in c$. Then $\omega \in L^{n/k}(\wedge^k M)$. Consider the $L^{n/k}$-Hodge decomposition $\omega = d\alpha + d^*\beta + \gamma$ of $\omega$. Since $d\omega = 0$, $dd^*\beta = 0$, and by uniqueness of the Hodge decomposition on $M$, $d^*\beta = 0$. By the Sobolev-Poincar\'e inequality \eqref{eq:sobolev_poincare_ineq}, we may assume that $\alpha \in \cesobt(\wedge^{k-1} M)$. Hence, $\gamma = \omega - d\alpha \in c$. Since $\gamma$ is smooth, we obtain the claim.
\end{proof}

\subsection{Quasiregular mappings and push-forward}

Recall that a continuous map $f \colon M \to N$ between two closed $n$-manifolds is \emph{$K$-quasiregular} if $f$ is contained in the Sobolev space $W^{1, n}(M, N)$ and satisfies
\[
	\norm{Df(x)} \leq K J_f(x)
\]
for almost every $x \in M$ in the Lebesgue sense. Here, the Sobolev space $W^{1, n}(M, N)$ is defined using a smooth isometric Nash embedding $\iota \colon N \to \R^l$ for some $l > 0$, where $f \colon M \to N$ is an element of $W^{1, n}(M, N)$ if $\iota \circ f \in W^{1, n}(M; \R^l)$. The map $Df$ is then obtained as the unique bundle map $TM \to TN$ satisfying $D(\iota \circ f) = D\iota \circ Df$. See e.g. Haj\l asz--Iwaniec--Mal\'y--Onninen \cite{HajlaszIwaniecMalyOnninen2008paper} for more details on Sobolev spaces of mappings between closed manifolds.

Furthermore, recall that the \emph{degree} of a continuous map $f \colon M \to N$ between closed manifolds is the unique integer $\deg f$ satisfying $f^*c_N = (\deg f)c_M$, where $c_M$ and $c_N$ are the positively oriented generators of the compactly supported Alexander--Spanier cohomology groups $H^k_c(M; \Z)$ and $H^k_c(N; \Z)$, respectively. If $f \colon M \to N$ is a non-constant quasiregular map between closed manifolds, the degree of $f$ is always positive.

The main reason to introduce the conformal cohomology $\cehom{*}$ stems from the following lemma. For the proof, we refer to \cite[Lemma 3.4]{KangasniemiPankka2017paper}. 
\begin{lemma}[{\cite[Lemma 3.4]{KangasniemiPankka2017paper}}]\label{lemma:sobolev_de_rham_qr_pullback}
	Let $f \colon M \to N$ be a non-constant quasiregular map between closed $n$-manifolds. Then $f$ induces a pull-back chain map $f^* \colon \cesobt(\wedge^* N) \to \cesobt(\wedge^* M)$ satisfying $f^* \circ d = d \circ f^*$. Consequently, $f$ induces a natural pull-back map $f^* \colon \cehom{*}(N) \to \cehom{*}(M)$ in conformal cohomology.
\end{lemma}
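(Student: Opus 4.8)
The plan is to build the pull-back $f^*$ first as an operation on measurable forms, then to check that it carries each altered conformal Sobolev space on $N$ boundedly into the corresponding one on $M$, and finally to verify the intertwining $f^*\circ d = d\circ f^*$; once $f^*$ is a cochain map between the complexes $\cesobt(\wedge^* N)$ and $\cesobt(\wedge^* M)$, the induced map $\cehom{*}(N)\to\cehom{*}(M)$ is automatic.

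First, since $f\in W^{1,n}(M,N)$ the differential $Df_x$ exists for almost every $x\in M$, so for a measurable $k$-form $\omega$ on $N$ one can set $(f^*\omega)_x=\omega_{f(x)}\circ\wedge^k Df_x$. This obeys the pointwise bound $\abs{f^*\omega}_x\le\norm{Df_x}^k\abs{\omega}_{f(x)}$, and in top degree $f^*(g\,\vol_N)=(g\circ f)J_f\,\vol_M$. Feeding the pointwise bound into the distortion inequality $\norm{Df}^n\le KJ_f$ and then into the change-of-variables identity $\int_M(g\circ f)J_f\,\vol_M=(\deg f)\int_N g\,\vol_N$ for nonnegative measurable $g$ -- which holds because a non-constant quasiregular map is, by Reshetnyak's theorem, a sense-preserving discrete open map of positive degree, so the area formula applies with a.e.\ multiplicity $\deg f$ -- gives the key estimate
\[
	\norm{f^*\omega}_p\le (K\deg f)^{k/n}\norm{\omega}_p
\]
for every $p\in[1,\infty)$ and $\omega\in L^p(\wedge^k N)$. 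Applying this with $p=n/k$ to $\omega$ and with $p=n/(k+1)$ to $d\omega$ already settles the degrees $1\le k\le n-2$, where $\cesobt=W^{d,n/k,n/(k+1)}$, provided the chain-map identity is in hand.

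The boundary degrees are exactly where the tuning of the altered spaces is used. For $k=0$ one has $f^*\omega=\omega\circ f$ with weak derivative $f^*(d\omega)\in L^n(\wedge^1 M)$ by the key estimate, so $\omega\circ f$ has an $L^n$ gradient on the closed manifold $M$; truncating $\omega$, applying the Sobolev--Poincar\'e inequality to the truncations, and passing to the limit (using that quasiregular maps pull back null sets to null sets) places $\omega\circ f$ in $L^{\infty,\flat}(\wedge^0 M)=\bigcap_{s<\infty}L^s(\wedge^0 M)$. For $k=n$ it suffices that $f^*$ preserve $L^{1,\sharp}(\wedge^n M)$: if $g\in L^s(N)$ with $s>1$, then Hölder's inequality combined with the change-of-variables identity (for the factor linear in $J_f$) and the higher integrability $J_f\in L^{1+\eps}(M)$ of the Jacobian of a quasiregular map yields $(g\circ f)J_f\in L^{s'}(\wedge^n M)$ for some $s'>1$. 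The case $k=n-1$ then follows from the $L^{n/(n-1)}$ instance of the key estimate together with this. Hence, modulo the chain-map identity, $f^*$ is a bounded cochain operator $\cesobt(\wedge^* N)\to\cesobt(\wedge^* M)$, and it descends to $\cehom{*}(N)\to\cehom{*}(M)$.

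The hard part is the identity $d(f^*\omega)=f^*(d\omega)$ for the non-injective, merely $W^{1,n}$-regular map $f$: for smooth $\omega$ it is the naturality of $d$, but for a genuine quasiregular $f$ one cannot approximate the map by smooth quasiregular maps, so I would instead either mollify $\omega$ in the target and pass to the limit using the key estimate for stability, or dualise against the quasiregular push-forward of \cite{KangasniemiPankka2017paper} via the weak-derivative formulas \eqref{eq:weak_ext_derivative_definition} and \eqref{eq:weak_coext_derivative_definition}; in both routes the crux is the measure-theoretic machinery of Reshetnyak's theory (bounded multiplicity, Lusin's condition (N), the area and change-of-variables formulas, higher integrability of $J_f$) rather than any algebra. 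The remaining verifications -- functoriality $(g\circ f)^*=f^*\circ g^*$ from the chain rule for $\wedge^k D(g\circ f)$, and the passage to cohomology -- are then routine.
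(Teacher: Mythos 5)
Note first that this paper does not reprove the lemma: it is quoted verbatim from \cite[Lemma 3.4]{KangasniemiPankka2017paper}, so your proposal can only be measured against the standard argument used there, and in outline you do follow it: almost-everywhere pullback via $\wedge^k Df$, the distortion inequality combined with the change-of-variables formula with multiplicity $\deg f$, the borderline Sobolev (Trudinger) embedding on the closed manifold $M$ for $k=0$, and higher integrability of $J_f$ to land in $L^{1,\sharp}$ in degrees $n-1$ and $n$. One misstatement should be corrected: the ``key estimate'' $\norm{f^*\omega}_p \leq (K\deg f)^{k/n}\norm{\omega}_p$ is false for general $p \in [1,\infty)$; it holds only at the conformal exponent $p = n/k$, since only then does $\norm{Df}^{kp} = \norm{Df}^n \leq K J_f$ produce exactly one Jacobian factor to feed into the change of variables. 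The two instances you actually use ($p=n/k$ for the $k$-form $\omega$, and $p=n/(k+1)$ for the $(k+1)$-form $d\omega$) are precisely conformal-exponent instances, so nothing downstream breaks, but the general-$p$ claim as stated is wrong.

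The genuinely thin spot is the identity $d\circ f^* = f^*\circ d$. Mollifying $\omega$ in the target only reduces matters to the identity $d(f^*\omega) = f^*(d\omega)$ for \emph{smooth} $\omega$ and the nonsmooth $W^{1,n}$-map $f$, and that is exactly the nontrivial step; it is not ``naturality of $d$''. It is also not resolved by Reshetnyak's measure-theoretic machinery, nor obstructed by the impossibility of approximating $f$ by smooth quasiregular maps: the approximants need not be quasiregular. The standard argument approximates $f$ locally in $W^{1,n}$ by smooth maps (Nash embedding plus mollification and projection); since $Df_j \to Df$ in $L^n$ implies $\wedge^k Df_j \to \wedge^k Df$ in $L^{n/k}$, both sides of the identity for the smooth approximants converge in the distributional sense, and the smooth-$\omega$ case follows, after which your mollification of $\omega$ at conformal exponents finishes the general case. (Alternatively one quotes the known commutation results for Sobolev pullbacks of Iwaniec--Lutoborski or Bonk--Heinonen; dualizing against the push-forward of \cite{KangasniemiPankka2017paper} is less convenient, as that operator is constructed after, and partly by means of, the pullback.) With the estimate restricted to the conformal exponents and this approximation step supplied, your outline matches the cited proof.
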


Note that the conclusion of Lemma \ref{lemma:sobolev_de_rham_qr_pullback} generalizes to complex forms and complex cohomology, where the pull-back map $f^* \colon \cesobt(\wedge^* N; \C) \to \cesobt(\wedge^* M; \C)$ is defined componentwise by
\[
	f^*(\alpha + i\beta) = f^*\alpha + if^*\beta
\]
for $\alpha + i\beta \in \cesobt(\wedge^* N; \C)$.

Next, we recall the quasiregular push-forward map of \cite[Section 5]{KangasniemiPankka2017paper}. Given a quasiregular map $f \colon M \to N$ between closed $n$-manifolds, by applying the Vitali covering theorem we obtain an open set $V_f \subset N$ and disjoint open sets $U_{f, 1}, \ldots, U_{f, \deg f} \subset M$ which cover $N$ and $M$ up to a Lebesgue null-set respectively, and on which $f$ has well-defined quasiconformal inverses $f_i^{-1} \colon V_f \to U_{f, i}$. Note that the sets $V_f$ and $U_{f, i}$ are not domains, but instead countable unions of pairwise disjoint open balls. The \emph{quasiregular push-forward} $\push{f} \colon \Gamma(\wedge^k M) \to \Gamma(\wedge^k N)$ is now defined for $\omega \in \Gamma(\wedge^k M)$ by
\[
	(\push{f} \omega)\vert V_f = \sum_{i=1}^{\deg f} (f_i^{-1})^* \omega;
\]
note that it is sufficient to define $\push{f}\omega$ in $V_f$, since $V_f$ is of full measure.

The main properties of the quasiregular push-forward are the following; see \cite[Theorem 5.3, Corollary 5.4 and Lemma 5.5]{KangasniemiPankka2017paper} for proofs.
\begin{lemma}\label{lemma:qr_pushforward_properties}
	Let $f \colon M \to N$ be a quasiregular map between closed $n$-manifolds, and let $k, l \in \{0, \ldots n\}$ be indices satisfying $k+l \leq n$. Then
	\begin{enumerate}
		\item for all $\alpha \in \Gamma(\wedge^k M)$ and $\beta \in \Gamma(\wedge^{l } M)$,
			\[
				\push{f}(\alpha \wedge f^*\beta) = (\push{f} \alpha) \wedge \beta;
			\]
		\item for all $\omega \in \Gamma(\wedge^k M)$,
			\[
				\push{f} f^* \omega = (\deg f)\omega;
			\]
		\item for all $\omega \in L^1(\wedge^n M)$,
			\[
				\int_N \push{f} \omega = \int_M \omega;
			\]
		\item for all $\omega \in \cesobt(\wedge^k M)$, $\push{f} \omega \in \cesobt(\wedge^k M)$ and $d\push{f} \omega = \push{f} d\omega$;
		\item the push-forward induces a natural map $\push{f} \colon \cehom{*}(M) \to \cehom{*}(N)$ in the conformal cohomology.
	\end{enumerate}
\end{lemma}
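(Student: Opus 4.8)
The plan is to work almost entirely on the full-measure open set $V_f \subset N$, where $\push{f}\omega = \sum_{i=1}^{\deg f}(f_i^{-1})^*\omega$ is an honest finite sum of pullbacks under the quasiconformal inverse branches $f_i^{-1}\colon V_f \to U_{f,i}$, and then to extract the global identities from this local description. The two ingredients I would set up first, and use throughout, are: \textbf{(a)} pullback under a quasiconformal homeomorphism is functorial on measurable forms, i.e.\ it commutes with the wedge product and, in the weak sense, with $d$, and it satisfies the change-of-variables formula $\int (g^{-1})^*\omega = \int\omega$ with no sign, since non-constant quasiregular maps between oriented manifolds are sense-preserving with $J_f>0$ almost everywhere; and \textbf{(b)} the branch set of $f$ has measure zero, so $V_f$ and $\bigsqcup_i U_{f,i}$ are genuinely of full measure in $N$ and $M$. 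Ingredient (a) is where Reshetnyak's theory of quasiregular maps enters (discreteness, openness, a.e.\ differentiability, $\mathrm{ACL}^n$, Lusin's condition $(N)$), and I would regard it as the technical backbone of the whole lemma.

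Granting (a) and (b), properties (1)--(3) are short. For (1), on $V_f$ one computes
\[
\push{f}(\alpha\wedge f^*\beta)=\sum_i (f_i^{-1})^*\alpha \wedge (f\circ f_i^{-1})^*\beta=\Big(\sum_i (f_i^{-1})^*\alpha\Big)\wedge\beta,
\]
using $f\circ f_i^{-1}=\id$ on $V_f$; as $V_f$ has full measure this is the asserted identity almost everywhere. Property (2) is the special case $\alpha = 1 \in \Gamma(\wedge^0 M)$ together with $\push{f}1 = \sum_i 1 = \deg f$ on $V_f$. Property (3) is the change-of-variables formula summed over branches: $\int_N \push{f}\omega = \sum_i \int_{V_f}(f_i^{-1})^*\omega = \sum_i \int_{U_{f,i}}\omega = \int_M\omega$, the last step because the $U_{f,i}$ are pairwise disjoint and cover $M$ up to a null set. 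A convenient byproduct of (1) and (3), which I would record for use in (4), is the duality
\[
\int_N (\push{f}\alpha)\wedge\beta = \int_N \push{f}(\alpha\wedge f^*\beta) = \int_M \alpha\wedge f^*\beta
\]
whenever the degrees and integrability exponents are compatible.

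Property (4) is the crux. That $\push{f}\omega$ stays in $\cesobt(\wedge^k N)$ needs $L^p$-boundedness of quasiconformal pullback on forms of the relevant exponent; this follows from $\|D(f_i^{-1})\|^n \asymp J_{f_i^{-1}}$ combined with change of variables, using that the exponents $n/k$ and $n/(k+1)$ in the definition of $\cesobt$ are exactly those preserved by conformal pullback. For the commutation $d\push{f}\omega = \push{f}d\omega$, rather than analyse the possible distributional contribution from $N\setminus V_f$ directly, I would verify the defining weak-derivative pairing by testing: write the pairing against a smooth compactly supported test form of complementary degree as a wedge integral, transport it to $M$ via the duality above, use that $f^*$ commutes weakly with $d$ (the quasiregular pullback chain map, Lemma~\ref{lemma:sobolev_de_rham_qr_pullback}), integrate by parts on the closed manifold $M$ where there is no boundary term, and transport the result back to $N$ by the duality once more; tracking the Hodge stars then yields $d\push{f}\omega = \push{f}d\omega$. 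The delicate point here is not any single step but the integrability bookkeeping that legitimises every integration by parts and every use of the duality — the conformal exponents were chosen precisely so that the relevant Hölder-conjugate pairs match, and checking that they all do is the main thing to be careful about. Finally, property (5) is immediate from (4): $\push{f}$ is then a chain map $\cesobt(\wedge^* M) \to \cesobt(\wedge^* N)$, hence descends to $\push{f}\colon\cehom{*}(M)\to\cehom{*}(N)$ on cohomology.
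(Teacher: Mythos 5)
The paper itself contains no proof of this lemma; it is imported wholesale from the companion paper (the citation to Theorem 5.3, Corollary 5.4 and Lemma 5.5 of \cite{KangasniemiPankka2017paper}), so there is nothing in-text to compare against line by line. That said, your route is the natural reconstruction of the cited argument and is sound in outline: (1)--(3) are exactly the branch-wise computations on the full-measure set $V_f$ (with the a.e.\ chain rule for the inverse branches supplied by the Reshetnyak package: a.e.\ differentiability, conditions (N) and (N$^{-1}$), $J_f>0$ a.e.), and your treatment of (4) by testing the weak-derivative pairing globally on $N$, transporting to $M$ via $\int_N(\push{f}\alpha)\wedge\beta=\int_M\alpha\wedge f^*\beta$, and integrating by parts on the closed manifold $M$ is the right way to rule out a distributional contribution from $N\setminus V_f$; (5) then follows formally.

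The one substantive gap is in the membership half of (4). Your justification, that the exponents $n/k$ and $n/(k+1)$ ``are exactly those preserved by conformal pullback,'' covers only the middle degrees $1\leq k\leq n-2$, where $\abs{(f_i^{-1})^*\omega}\leq C\,(\abs{\omega}\circ f_i^{-1})\,J_{f_i^{-1}}^{k/n}$ plus change of variables gives the $L^{n/k}$-bound with no further input. But the altered spaces $\cesobt$ were introduced precisely because the endpoint exponents do \emph{not} behave this way, and those cases are part of the statement. For $k=0$ you must show $\push{f}\omega\in L^{\infty,\flat}$, and here
\[
	\int_{V_f}\abs{\omega\circ f_i^{-1}}^s\vol_N=\int_{U_{f,i}}\abs{\omega}^s\,J_f\,\vol_M ,
\]
which is finite for every $s<\infty$ only after invoking Bojarski--Elcrat/Gehring higher integrability $J_f\in L^{1+\eps}$; conformal invariance alone does not suffice. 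Dually, for $k=n$, and for the $d\omega$-component when $k=n-1$, you must show that $\push{f}$ preserves $L^{1,\sharp}$: for an $n$-form $\tau$ and $s>1$,
\[
	\int_{V_f}\abs{(f_i^{-1})^*\tau}^{s}\vol_N=\int_{U_{f,i}}\abs{\tau}^{s}\,J_f^{1-s}\,\vol_M ,
\]
so one needs integrability of a negative power of $J_f$ on the branches, equivalently higher integrability of the inverse-branch Jacobians $J_{f_i^{-1}}$, with control that is uniform enough to sum over the countably many balls constituting $V_f$. These quantitative Jacobian estimates are the real content of the endpoint cases (and they also enter the integrability bookkeeping of your duality step when the test pairing involves the $L^{1,\sharp}$ and $L^{\infty,\flat}$ components); your sketch should name them explicitly. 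Once they are in place, the rest of your argument, including the integration-by-parts lemma for Sobolev forms at conjugate conformal exponents, goes through as you describe.
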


As with the pull-back map $f^* \colon \Gamma(\wedge^k N; \C) \to \Gamma(\wedge^k M; \C)$, the quasiregular push-forward admits a complex generalization $\push{f} \colon \Gamma(\wedge^k M; \C) \to \Gamma(\wedge^k N; \C)$, which is defined componentwise by
\[
	\push{f}(\alpha + i\beta) = \push{f}\alpha + i\push{f}\beta
\]
for $\alpha, \beta \in \Gamma(\wedge^* M)$. Furthermore, all the properties discussed in Lemma \ref{lemma:qr_pushforward_properties} clearly generalize to the complex case.


\section{Representation of cohomology classes of $\cehom{*}(M; \C)$}\label{sect:cohomology_representation}

In this section, we first recall the invariant conformal structure of Iwaniec and Martin \cite{IwaniecMartin1996paper} for a uniformly quasiregular map $f \colon M \to M$. Then we use this structure to obtain a representation of cohomology classes by measurable forms which is closed under pull-back by a given uniformly quasiregular self-map.

\subsection{Invariant conformal structure}

Given an $n$-dimensional inner product space $V$, let $S(V)$ denote the space of linear self-maps $V \to V$ which are positive-definite, symmetric, and have determinant 1. Note that the definition of $S(V)$ depends only on the inner product of $V$, and doesn't require fixing a basis of $V$. The space $S(V)$ admits a metric $\rho_V$ satisfying 
\[
	\rho_V(A, \id_{V}) = \sqrt{\dfrac{(\log \lambda_1)^2 + \ldots + (\log \lambda_n)^2}{n}} 
	\quad \text{for } A \in S(V), 
\]
where $\lambda_i$ are the eigenvalues of $A$; see Iwaniec--Martin \cite[Section 20.1]{IwaniecMartin2001book} for details.

The spaces $S(T_x M)$ form a fiber bundle $S(TM)$ over $M$, which is topologized by using smooth local orthonormal frames on $M$ to locally identify $S(TM)$ with $M \times S(\R^n)$. A \emph{conformal structure} on $M$ is a measurable section $G \colon M \to S(TM)$ which is essentially bounded, that is,
\[
	\esssup_{x \in M} \rho_{T_x M}(G(x), \id_{T_x M}) < \infty.
\]
Given a continuous $W^{1, n}$-mapping $f \colon M \to M$, we say that a conformal structure $G$ on $M$ is \emph{$f$-invariant} if it satisfies
\begin{equation}\label{eq:f-invariance_definition}
	(Df(x))^T G(f(x)) Df(x) = J_f^\frac{2}{n}(x) \, G(x)
\end{equation}
for almost every $x \in M$.

The existence of an $f$-invariant conformal structure for a non-constant uniformly quasiregular $f$ is due to Iwaniec and Martin, based on a similar construction of Tukia \cite[Theorem F]{Tukia1993paper} in the quasiconformal case.

\begin{thm}[{\cite[Theorem 5.1]{IwaniecMartin1996paper}}]\label{thm:iwaniec_martin_invariant_conformal_structure}
	Let $M$ be a closed manifold and let $f \colon M \to M$ be a non-constant uniformly quasiregular map. Then there exists an $f$-invariant conformal structure $G_f$ on $M$.
\end{thm}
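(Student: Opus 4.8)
The plan is to obtain $G_f$ as a fixed point of the pull-back operation on conformal structures, via a fiberwise center-of-mass construction in the nonpositively curved symmetric-space model of the fibers $S(T_xM)$. First I would set up the pull-back: since $f$ is a non-constant quasiregular self-map, a.e.\ $x$ has $J_f(x) > 0$ and hence $Df(x) \in GL(T_xM)$, so for a conformal structure $H$ on $M$ I can set $(f^{\#}H)(x) = J_f(x)^{-2/n}(Df(x))^{T} H(f(x)) Df(x)$, which is the left-hand side of \eqref{eq:f-invariance_definition} with $G=H$, normalised to determinant $1$. The distortion inequality $\norm{Df}^n \leq K J_f$ shows that $\rho_{T_xM}\big(J_f^{-2/n}(Df)^{T}Df,\, \id_{T_xM}\big)$ is bounded a.e.\ by a constant depending only on $K$, so $f^{\#}H$ is again an essentially bounded measurable section of $S(TM)$. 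I need two properties of $f^{\#}$: (i) functoriality, $(f\circ g)^{\#} = g^{\#}\circ f^{\#}$, hence $(f^m)^{\#} = (f^{\#})^m$; and (ii) fibered over $M$, the map $T_x := f^{\#}|_x \colon S(T_{f(x)}M) \to S(T_xM)$ is a surjective \emph{isometry}, because $S(V)$ with the metric $\rho_V$ is isometric to the symmetric space $SL(n,\R)/SO(n)$ on which $GL(n,\R)$ acts by isometries through pull-back of inner products, the scalar factor $J_f^{2/n}$ being exactly what quotients out the $\R_{>0}$-action. Finally, $G$ is $f$-invariant in the sense of \eqref{eq:f-invariance_definition} if and only if $f^{\#}G = G$.

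Next I would produce a bounded orbit. Let $G_0 \colon x \mapsto \id_{T_xM}$ be the conformal structure of the background metric and put $G_m = (f^{\#})^m G_0 = (f^m)^{\#}G_0$. This is where \emph{uniform} quasiregularity is essential: every $f^m$ is $K$-quasiregular with the \emph{same} $K$, so by the bound above $\rho_{T_xM}(G_m(x),\id_{T_xM}) \leq C(K)$ for a.e.\ $x$, uniformly in $m$. Thus for a.e.\ $x$ the orbit $\{G_m(x) : m \geq 0\}$ lies in the closed $\rho_{T_xM}$-ball $\overline{B}_x$ of radius $C(K)$ about $\id_{T_xM}$ inside the complete Hadamard manifold $S(T_xM)$. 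The function $\phi_x(A) := \limsup_{m\to\infty}\rho_{T_xM}(A,G_m(x))^2$ is then finite and uniformly convex, hence attains its infimum at a unique point, which I take to be $G_f(x)$. A nearest-point-projection argument onto the convex set $\overline{B}_x$ shows $G_f(x) \in \overline{B}_x$, so $G_f$ is essentially bounded, and a measurable-selection argument gives that $x \mapsto G_f(x)$ is measurable; thus $G_f$ is a conformal structure.

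It remains to check $f^{\#}G_f = G_f$. By functoriality, $G_m(x) = T_x(G_{m-1}(f(x)))$ for $m \geq 1$, so since $T_x$ is a bijective isometry and deleting the $m=0$ term does not affect a $\limsup$, one gets $\phi_x(T_x B) = \limsup_m \rho_{T_{f(x)}M}(B, G_{m-1}(f(x)))^2 = \phi_{f(x)}(B)$ for every $B \in S(T_{f(x)}M)$. Hence $T_x$ carries the unique minimiser of $\phi_{f(x)}$, namely $G_f(f(x))$, to the unique minimiser of $\phi_x$, namely $G_f(x)$; that is, $(f^{\#}G_f)(x) = T_x(G_f(f(x))) = G_f(x)$ for a.e.\ $x$, which is exactly \eqref{eq:f-invariance_definition}.

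I expect the main obstacle to be the transfer of this finite-dimensional CAT$(0)$ center-of-mass argument to the measurable fibered setting: verifying that the fiberwise minimiser $x \mapsto G_f(x)$ is genuinely measurable (a measurable-selection issue, the minimiser being the solution of an optimisation problem whose data $\{G_m(x)\}$ depends only measurably on $x$) and essentially bounded, and that the a.e.\ identities $G_m(x) = T_x(G_{m-1}(f(x)))$ hold for all $m$ off a single null set. The underlying geometric inputs — the identification $S(V)\cong SL(n,\R)/SO(n)$, the isometric $GL(n,\R)$-action, and uniform convexity of the squared distance on a Hadamard manifold — are classical; see Iwaniec--Martin \cite[Section 20.1]{IwaniecMartin2001book}.
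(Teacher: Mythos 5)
The paper does not prove this statement at all: it is quoted from Iwaniec--Martin, and the text only remarks that their proof, written for domains of $\S^n$, generalizes to conformal structures defined as measurable sections of $S(TM)$ on a closed manifold. Your sketch is, in substance, the classical Tukia/Iwaniec--Martin argument that the citation points to: pull back a background structure by the iterates, use uniform $K$-quasiregularity to confine the orbit $\{(f^m)^{\#}G_0(x)\}$ to a ball of radius $C(n,K)$ in the nonpositively curved fiber $S(T_xM)\cong SL(n,\R)/SO(n)$, and extract an isometry-equivariant ``center''. The one genuine variation is your choice of center: you take the asymptotic center (unique minimizer of $A\mapsto\limsup_m\rho(A,G_m(x))^2$), whereas the cited proof works with a circumcenter-type center of the (closure of the) orbit; both devices serve the same purpose, being insensitive to finitely many initial terms --- which is exactly what is needed, since pulling back by $f$ only shifts the orbit, $G_m(x)=T_x(G_{m-1}(f(x)))$, rather than preserving it --- and being equivariant under the fiberwise isometries $T_x$, so your invariance argument goes through. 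The issues you flag as remaining are real but routine rather than gaps: measurability of $x\mapsto G_f(x)$ follows because $(x,A)\mapsto\limsup_m\rho(A,G_m(x))^2$ is measurable in $x$ and continuous in $A$ with a unique minimizer, so near-minimizers over a countable dense subset of the fiber converge to $G_f(x)$; the identities $G_m(x)=T_x(G_{m-1}(f(x)))$ for all $m$ off one null set need only the a.e.\ chain rule for quasiregular maps, $J_f>0$ a.e., the fact that $f^{-1}$ of a Lebesgue null set is null (so that $G_{m-1}\circ f$ is defined a.e.), and a countable union of null sets; and convexity of metric balls plus the nonexpansive nearest-point projection gives essential boundedness, as you say. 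So I see no error, and your route matches the cited proof in all essentials, differing only in the choice of center and in spelling out the invariance via \eqref{eq:f-invariance_definition}.
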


For a proof of Theorem \ref{thm:iwaniec_martin_invariant_conformal_structure}, see \cite[Section 5]{IwaniecMartin1996paper}, or alternatively \cite[Section 21.5]{IwaniecMartin2001book}. Note that the proof is written for domains $\Omega$ in $\S^n$ using the canonical identification $T_x \S^n \cong \R^n$ for $x \in \S^n$, whereby conformal structures may be defined as just measurable essentially bounded maps $\Omega \to S(\R^n)$. However, the proof generalizes in a straightforward manner to the above definition of conformal structures on closed manifolds. See also \cite[Section 4]{OkuyamaPankka2014paper} for further discussion.

Let $f \colon M \to M$ be a uniformly quasiregular and non-constant map. The invariant conformal structure $G_f$ defines a measurable Riemannian metric $\ip{\cdot}{\cdot}_f$ on $M$ by
\begin{equation}\label{eq:invariant_riemannian_metric}
	\ip{a}{b}_f = \ip{G_f(x)a}{b}
\end{equation}
for almost every $x \in M$ and all $a, b \in T_x M$. Using the bundle map $\sigma_f \colon TM \to T^*M$, defined by $\sigma_f(v)(w) = \ip{v}{w}_f$ for $v, w \in T_x M$, the Riemannian metric $\ip{\cdot}{\cdot}_f$ extends to the coexterior bundles $\wedge^k T^*M$ by equation \eqref{eq:grassmann}. By a straightforward calculation using \eqref{eq:f-invariance_definition},  \eqref{eq:invariant_riemannian_metric}, and \eqref{eq:grassmann}, the formula
\begin{equation}\label{eq:inv_product_pullback_formula}
	\ip{(f^*\omega)_x}{(f^*\tau)_x}_f = \ip{\omega_{f(x)}}{\tau_{f(x)}}_f J_f^\frac{2k}{n}(x),
\end{equation}
holds for almost every $x \in M$ and all $\omega, \tau \in \Gamma(\wedge^k M)$, where $0 < k \leq n$. By \eqref{eq:complex_inner_product}, the formula \eqref{eq:inv_product_pullback_formula} also generalizes to $\omega$ and $\tau$ with complex coefficients.

As in the case of the original Riemannian metric on $M$, the measurable Riemannian metric $\ip{\cdot}{\cdot}_f$ induces measurable pointwise norms $\abs{\cdot}_f$ on the coexterior bundles $\wedge^k T^*M$, and $L^p$-norms $\norm{\cdot}_{f, p}$ on the spaces of measurable $k$-forms $\Gamma(\wedge^k M)$ and $\Gamma(\wedge^k M; \C)$. Due to the essential boundedness of $G_f$, we have the estimate
\begin{equation}\label{eq:norm_comparability}
	C^{-1} \abs{\omega_x} \leq \abs{\omega_x}_f \leq C\abs{\omega_x},
\end{equation}
for $\omega \in \Gamma(M; \C)$ and almost every $x \in M$; here, the constant $C$ depends only on the essential bound of $G_f$, which in turn depends only on the dimension $n$ of $M$ and the distortion constant $K$ of $f$. Hence, the norm $\norm{\cdot}_{f, p}$ is equivalent with the standard $L^p$-norm on the spaces $L^p(\wedge^k M)$ and $L^p(\wedge^k M; \C)$, $1 \leq p \leq \infty$.

Let $0 < k \leq n$, and let $\omega \in L^{n/k}(\wedge^k M; \C )$. As a consequence of \eqref{eq:inv_product_pullback_formula},
\begin{equation}\label{eq:inv_pointwise_pullback_formula}
	\abs{f^* \omega}_f = \left(\abs{\omega}_f \circ f\right) J_f^{\frac{k}{n}}
\end{equation}
almost everywhere on $M$. By the quasiregular change of variables, we obtain
\begin{equation}\label{eq:inv_lp_pullback_formula}
\norm{f^* \omega}_{f, \frac{n}{k}} = (\deg f)^\frac{k}{n} \norm{\omega}_{f, \frac{n}{k}}.
\end{equation}
In this way, for $0 < k \leq n$, the $f$-invariant conformal structure yields an equivalent norm on $L^{n/k}(\wedge^k M)$ and $L^{n/k}(\wedge^k M; \C)$ under which the pull-back $f^*$ is uniformly expanding.

Finally, \eqref{eq:inv_lp_pullback_formula} has a counterpart for the quasiregular push-forward.

\begin{lemma}\label{lem:inv_lp_pushforward}
	Let $f \colon M \to M$ be a non-constant uniformly quasiregular map, $0 < k \leq n$, and let $\omega \in L^{n/k}(\wedge^k M)$. Then
	\begin{equation}\label{eq:inv_lp_pushforward_formula}
		\norm{\push{f} \omega}_{f, \frac{n}{k}} \leq (\deg f)^\frac{n-k}{n} \norm{\omega}_{f, \frac{n}{k}}.
	\end{equation}
\end{lemma}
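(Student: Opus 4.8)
The plan is to derive the push-forward estimate \eqref{eq:inv_lp_pushforward_formula} from the pull-back formula \eqref{eq:inv_lp_pullback_formula} by a duality argument, exactly as the analogous $L^1$/$L^p$ duality is used in \cite{KangasniemiPankka2017paper}. The pairing $\langle \alpha, \beta\rangle \mapsto \int_M \alpha \wedge \beta$ between $k$-forms and $(n-k)$-forms, combined with the adjunction in Lemma \ref{lemma:qr_pushforward_properties}(1) and the integral-preservation property Lemma \ref{lemma:qr_pushforward_properties}(3), should convert a bound on $f^*$ acting on $(n-k)$-forms into a bound on $\push{f}$ acting on $k$-forms.

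Here is the order of steps. First, since the $f$-norm $\norm{\cdot}_{f,n/k}$ is comparable to the standard $L^{n/k}$-norm by \eqref{eq:norm_comparability}, it suffices to work with these equivalent norms throughout; in particular $\push{f}\omega \in L^{n/k}(\wedge^k M)$ is automatic. Second, I would represent the norm of $\push{f}\omega$ by duality: by the $f$-Hodge star, for $\omega \in L^{n/k}(\wedge^k M)$ one has
\[
	\norm{\push{f}\omega}_{f,\frac{n}{k}} = \sup \left\{ \int_M (\push{f}\omega) \wedge \tau \;:\; \tau \in L^{\frac{n}{n-k}}(\wedge^{n-k} M), \ \norm{\tau}_{f,\frac{n}{n-k}} \leq 1 \right\},
\]
using that $L^{n/k}$ and $L^{n/(n-k)}$ are dual exponents and that the $f$-inner product realises this duality via the wedge product and the $f$-volume form (which coincides with $\vol_M$ since $\det G_f = 1$). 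Third, for a fixed competitor $\tau$, approximate it in $L^{n/(n-k)}$ by a smooth form and write $\int_M (\push{f}\omega)\wedge\tau = \int_M (\push{f}\omega)\wedge f^*\eta$ where, heuristically, $\eta = (1/\deg f)\push{f}\tau$; more carefully, set $\eta := \push{f}\tau / \deg f$ so that, although $f^*\eta \neq \tau$ pointwise, the two forms have the same integral against $\push{f}\omega$ after using Lemma \ref{lemma:qr_pushforward_properties}(2) and (3). Then by Lemma \ref{lemma:qr_pushforward_properties}(1) and (3),
\[
	\int_M (\push{f}\omega) \wedge f^*\eta = \int_M \push{f}(\omega \wedge f^* \eta) = \int_M \omega \wedge f^*\eta \cdot (\text{correction}),
\]
which after Hölder in the $f$-norms gives $\int_M (\push{f}\omega)\wedge f^*\eta \leq \norm{\omega}_{f,n/k}\,\norm{f^*\eta}_{f,n/(n-k)}$. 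Fourth, apply \eqref{eq:inv_lp_pullback_formula} with the exponent $n/(n-k)$ to get $\norm{f^*\eta}_{f,n/(n-k)} = (\deg f)^{\frac{n-k}{n}}\norm{\eta}_{f,n/(n-k)}$, and bound $\norm{\eta}_{f,n/(n-k)} = (\deg f)^{-1}\norm{\push{f}\tau}_{f,n/(n-k)} \leq \norm{\tau}_{f,n/(n-k)} \leq 1$; the last inequality is the push-forward's contraction on the dual exponent, which is either the already-known $L^1$-contraction interpolated against the $L^\infty$-contraction, or a direct application of Lemma \ref{lemma:qr_pushforward_properties}(3) with Jensen. Combining, $\norm{\push{f}\omega}_{f,n/k} \leq (\deg f)^{\frac{n-k}{n}}\norm{\omega}_{f,n/k}$, as claimed.

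The main obstacle is the bookkeeping in the third step: $f^* \eta$ does \emph{not} recover $\tau$, so one cannot simply substitute, and the naive identity $\int (\push{f}\omega)\wedge\tau = \int \omega \wedge \tau$-type manipulation does not directly apply. The clean way around this is to avoid introducing $\eta$ at all and instead test directly with $\tau = f^*\xi$ for $\xi$ ranging over unit-ball forms: by Lemma \ref{lemma:qr_pushforward_properties}(1), $\int_M(\push{f}\omega)\wedge f^*\xi = \int_M \push{f}(\omega\wedge f^*\xi) = \int_M \omega\wedge f^*\xi$, wait — more precisely $\int_N \push{f}(\omega\wedge f^*\xi) = \int_M \omega\wedge f^*\xi$ by (3), so Hölder and \eqref{eq:inv_lp_pullback_formula} give the bound, and one must finally check that pulled-back forms $f^*\xi$ are norming enough — i.e. that $\sup_{\norm{\xi}_{f,n/(n-k)}\leq 1}\int_M(\push{f}\omega)\wedge f^*\xi$ still captures $\norm{\push{f}\omega}_{f,n/k}$ up to the factor $(\deg f)^{(n-k)/n}$. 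This last point follows because as $\xi$ ranges over the unit ball, $f^*\xi$ ranges over forms of $f$-norm $(\deg f)^{(n-k)/n}$, and these are dense enough in the relevant sense on $V_f$ (full measure) to achieve the supremum defining the dual norm of $\push{f}\omega$, since $\push{f}\omega$ is supported on the image. Handling this density/surjectivity-up-to-scaling statement carefully — really, that $f^*$ composed with the duality pairing recovers the correct norm — is where the one genuine argument lies; everything else is Hölder, change of variables, and the listed properties of $\push{f}$.
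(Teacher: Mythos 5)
Your overall strategy---duality plus the adjunction of Lemma \ref{lemma:qr_pushforward_properties}(1) and the pull-back identity \eqref{eq:inv_lp_pullback_formula}---can be made to work, but the ``clean'' argument you settle on contains a genuine error. Lemma \ref{lemma:qr_pushforward_properties}(1) gives $(\push{f}\omega)\wedge\xi = \push{f}(\omega\wedge f^*\xi)$, so the correct object to pair $\push{f}\omega$ against is $\xi$ itself, not $\tau = f^*\xi$: your identity $\int_M(\push{f}\omega)\wedge f^*\xi = \int_M\push{f}(\omega\wedge f^*\xi)$ is not what the adjunction says. Because you test only against the constrained family $\{f^*\xi\}$, you are then forced into the closing claim that pulled-back forms are ``norming enough'' for $\push{f}\omega$; this claim is unsubstantiated (forms $f^*\xi$ are determined on each sheet $U_{f,i}$ by their values through $f_i^{-1}$, so they are far from dense in the dual unit ball, and ``$\push{f}\omega$ is supported on the image'' gives nothing since a non-constant quasiregular self-map of a closed manifold is surjective), and it is also unnecessary. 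Your earlier variant via $\eta = \push{f}\tau/\deg f$ is worse: the bound $\norm{\push{f}\tau}_{f,n/(n-k)}\le (\deg f)^{k/n}\norm{\tau}_{f,n/(n-k)}$ invoked there is exactly the lemma being proved, applied in degree $n-k$, so that route is circular.

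The repair is short: since $\det G_f = 1$, the volume form of $\ip{\cdot}{\cdot}_f$ is $\vol_M$ and the $f$-Hodge star is a pointwise $\abs{\cdot}_f$-isometry, so the wedge pairing realizes exact duality between $\left(L^{n/k}, \norm{\cdot}_{f,n/k}\right)$ and $\left(L^{n/(n-k)}, \norm{\cdot}_{f,n/(n-k)}\right)$ with constant $1$; hence $\norm{\push{f}\omega}_{f,n/k} = \sup\left\{\abs{\int_M(\push{f}\omega)\wedge\xi} : \norm{\xi}_{f,n/(n-k)}\le 1\right\}$, and for each such $\xi$, Lemma \ref{lemma:qr_pushforward_properties}(1) and (3), pointwise Cauchy--Schwarz in the $f$-metric followed by H\"older, and \eqref{eq:inv_lp_pullback_formula} applied to $(n-k)$-forms give $\abs{\int_M(\push{f}\omega)\wedge\xi} = \abs{\int_M\omega\wedge f^*\xi}\le\norm{\omega}_{f,n/k}\norm{f^*\xi}_{f,n/(n-k)} = (\deg f)^{(n-k)/n}\norm{\omega}_{f,n/k}$. (For $k=n$ the dual exponent is $\infty$ and one uses $\norm{\xi\circ f}_\infty\le\norm{\xi}_\infty$ instead of \eqref{eq:inv_lp_pullback_formula}.) Once corrected, this duality route is genuinely different from the paper's proof, which works directly from the definition of $\push{f}$: on $V_f$ one writes $\push{f}\omega = \sum_{i=1}^{\deg f}(f_i^{-1})^*\omega$, shows via \eqref{eq:inv_pointwise_pullback_formula} and the quasiconformal change of variables that each summand carries $f$-mass $\int_{U_{f,i}}\abs{\omega}_f^{n/k}\vol_M$, and concludes with the elementary inequality $\smallabs{\sum_i a_i}^{n/k}\le(\deg f)^{n/k-1}\sum_i\smallabs{a_i}^{n/k}$, which produces the factor $(\deg f)^{(n-k)/n}$ with no duality or density considerations at all.
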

\begin{rem}
	Note that there is no lower bound of the form $\norm{\push{f} \omega}_{f, n/k} \geq C^{-1}(\deg f)^{(n-k)/n} \norm{\omega}_{f, n/k}$ for some $C \geq 1$, since the quasiregular push-forward is not in general injective.
\end{rem}
\begin{proof}[Proof of Lemma \ref{lem:inv_lp_pushforward}]
	Let $V_f \subset M$ and $U_{f, i} \subset M$ be as in the definition of the quasiregular push-forward, alongside the maps $f^{-1}_i \colon V_f \to U_{f, i}$. Since the restriction $f\vert U_{f, i} \colon U_{f, i} \to V_f$ is quasiconformal, we have by \eqref{eq:inv_pointwise_pullback_formula} and the quasiconformal change of variables that 
	\begin{align*}
		\int_{V_f} \abs{(f_i^{-1})^* \omega}_f^\frac{n}{k} \vol_M
			&= \int_{U_{f, i}} \left(\abs{(f_i^{-1})^* \omega}_f^\frac{n}{k} \circ f\right) J_f \vol_M\\
			&= \int_{U_{f, i}} \abs{f^*(f_i^{-1})^* \omega}_f^\frac{n}{k} \vol_M\\
			&= \int_{U_{f, i}} \abs{\omega}_f^\frac{n}{k} \vol_M.
	\end{align*}
	Hence, a calculation similar to the proof of \cite[Lemma 5.8]{KangasniemiPankka2017paper} yields
	\begin{align*}
		\norm{\push{f} \omega}_{f, \frac{n}{k}}
		&= \left( \int_{V_f} \abs{\sum_{i=1}^{\deg f} \left(f_i^{-1}\right)^*\omega}^\frac{n}{k}_f
			 \vol_N \right)^\frac{k}{n}\\
		&\leq \left((\deg f)^{\frac{n}{k}-1} \sum_{i=1}^{\deg f} \int_{V_f} 
		\abs{\left(f_i^{-1}\right)^*\omega}^\frac{n}{k}_f \vol_N \right)^\frac{k}{n}\\
		&= (\deg f)^\frac{n-k}{n} \left( \sum_{i=1}^{\deg f} \int_{U_{f, i}} 
		\abs{\omega}^\frac{n}{k}_f \vol_N \right)^\frac{k}{n}\\
		&= (\deg f)^\frac{n-k}{n} \norm{\omega}_{f, \frac{n}{k}}.
	\end{align*}
	This completes the proof. 
\end{proof}

\subsection{The $f$-harmonic representation of $\cehom{*}(M; \C)$}\label{subsect:cohomology_representation}

In this section, we consider elements of cohomology classes $c \in \cehom{k}(M; \C)$ which minimize the norm $\norm{\cdot}_{f, n/k}$. We refer to Iwaniec--Scott--Stroffolini \cite[Section 7.1]{IwaniecScottStroffolini1999paper} or Bonk--Heinonen \cite[Section 3]{Bonk-Heinonen_Acta} for further discussion regarding cohomological norm-minimizers.

Let $f \colon M \to M$ be a non-constant uniformly quasiregular self-map on a closed manifold $M$, and let $0 < k < n$. By Lemma \ref{lemma:cohomology_class_closedness}, every cohomology class $c \in \cehom{k}(M; \C)$ is a closed affine subspace of  $L^{n/k}(\wedge^k M; \C)$. Furthermore, the Banach space $(L^{n/k}(\wedge^k M; \C), \norm{\cdot}_{f, n/k})$ is uniformly convex by the classical proof involving Hanner's inequalities; see e.g.\ \cite{Hanner1956paper}. Consequently, for every $c \in \cehom{k}(M; \C)$ there exists a unique form $\omega_{c} \in c$ satisfying 
\[
	\norm{\omega_{c}}_{f, \frac{n}{k}} = \inf_{\omega \in c} \norm{\omega}_{f, \frac{n}{k}}.
\]

Following the example of representing cohomology classes in the de Rham cohomology by harmonic forms, we define the space of \emph{$f$-harmonic complex $k$-forms} $\fharm{k}(M; \C)$ by
\[
	\fharm{k}(M; \C) = \left\{ \omega_{c} \in c \colon c \in \cehom{k}(M; \C), \, \norm{\omega_{c}}_{f, \frac{n}{k}} = \inf_{\omega \in c} \norm{\omega}_{f, \frac{n}{k}} \right\}.
\] 
The map $c \mapsto \omega_c$ defines a bijection $\cehom{k}(M; \C) \to \fharm{k}(M; \C)$. The forms $\omega_c \in \fharm{k}(M; \C)$ satisfy 
\begin{equation}\label{eq:minimizers_multiplication}
	\lambda\omega_{c} = \omega_{\lambda c} \in  \fharm{k}(M; \C)
\end{equation}
for every $\lambda \in \C$. Note, however, that in general $\omega_{c+c'}$ does not equal $\omega_{c} + \omega_{c'}$ for $c, c' \in \cehom{k}(M; \C)$, unless $k = n/2 \in \Z$.

\begin{prop}
	Let $f \colon M \to M$ be a non-constant uniformly quasiregular map on a closed manifold $M$, let $0 < k < n$, and let $\omega_c \in \fharm{k}(M; \C)$. Then
	\begin{equation}\label{eq:minimizers_pullback}
		f^*\omega_{c} = \omega_{f^*c} \in \fharm{k}(M; \C).
	\end{equation}
\end{prop}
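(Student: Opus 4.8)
The plan is to verify directly that $f^*\omega_c$ is the unique $\norm{\cdot}_{f, n/k}$-minimizer in the cohomology class $f^*c$; since the map $c \mapsto \omega_c$ is a bijection $\cehom{k}(M; \C) \to \fharm{k}(M; \C)$, this yields at once both $f^*\omega_c = \omega_{f^*c}$ and the membership $f^*\omega_c \in \fharm{k}(M; \C)$. First I would check that $f^*\omega_c$ genuinely represents the class $f^*c$: by the complex version of Lemma \ref{lemma:sobolev_de_rham_qr_pullback}, $f^*$ is a chain map on $\cesobt(\wedge^* M; \C)$, so $f^*\omega_c$ is a closed element of $\cesobt(\wedge^k M; \C)$ with $[f^*\omega_c] = f^*[\omega_c] = f^*c$, and $f^*\omega_c \in L^{n/k}(\wedge^k M; \C)$ by \eqref{eq:inv_lp_pullback_formula}. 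It then suffices to establish
\[
	\norm{f^*\omega_c}_{f, \frac{n}{k}} \leq \norm{\omega'}_{f, \frac{n}{k}} \qquad \text{for every } \omega' \in f^*c .
\]

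The one thing to notice --- and what I expect to be the crux --- is that $f^*c$ strictly contains the set $\{f^*\omega : \omega \in c\}$, so the uniform scaling of $f^*$ recorded in \eqref{eq:inv_lp_pullback_formula} does not by itself transport the minimizer; one must control an \emph{arbitrary} representative $\omega'$ of $f^*c$. The device for this is the quasiregular push-forward. Fix $\omega' \in f^*c$. By Lemma \ref{lemma:qr_pushforward_properties}(4), $\push{f}\omega' \in \cesobt(\wedge^k M; \C)$ and $d\push{f}\omega' = \push{f}d\omega' = 0$; by parts (2) and (5) of that lemma, the cohomology class of $\push{f}\omega'$ is $\push{f}(f^*c) = (\deg f)\,c$. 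Hence $\push{f}\omega'$ is a representative of $(\deg f)c$, whose minimizer is $\omega_{(\deg f)c} = (\deg f)\,\omega_c$ by \eqref{eq:minimizers_multiplication}, so that
\[
	(\deg f)\,\norm{\omega_c}_{f, \frac{n}{k}} = \norm{\omega_{(\deg f)c}}_{f, \frac{n}{k}} \leq \norm{\push{f}\omega'}_{f, \frac{n}{k}} .
\]

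To close the estimate I would apply Lemma \ref{lem:inv_lp_pushforward} to $\omega'$ --- its proof goes through verbatim for complex forms, since $\abs{\cdot}_f$ is an honest norm on $\wedge^k T^*M \otimes \C$ --- to get $\norm{\push{f}\omega'}_{f, \frac{n}{k}} \leq (\deg f)^{\frac{n-k}{n}}\norm{\omega'}_{f, \frac{n}{k}}$. Combined with the previous display this gives $\norm{\omega_c}_{f, \frac{n}{k}} \leq (\deg f)^{-\frac{k}{n}}\norm{\omega'}_{f, \frac{n}{k}}$, and then \eqref{eq:inv_lp_pullback_formula} yields
\[
	\norm{f^*\omega_c}_{f, \frac{n}{k}} = (\deg f)^{\frac{k}{n}}\norm{\omega_c}_{f, \frac{n}{k}} \leq (\deg f)^{\frac{k}{n}} (\deg f)^{-\frac{k}{n}} \norm{\omega'}_{f, \frac{n}{k}} = \norm{\omega'}_{f, \frac{n}{k}} ,
\]
which is the required inequality. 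Apart from this, the only care needed is bookkeeping the conformal exponent $n/k$ and checking at each step that the push-forward and pull-back stay inside the altered complex $\cesobt$; both are immediate from the cited lemmas, and the whole argument amounts to playing off the exactly opposite scaling rates $(\deg f)^{k/n}$ of $f^*$ and $(\deg f)^{(n-k)/n}$ of $\push{f}$ on $L^{n/k}(\wedge^k M; \C)$.
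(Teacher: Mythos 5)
Your proposal is correct and follows essentially the same route as the paper: the paper likewise plays the scaling $\norm{f^*\omega_c}_{f,n/k}=(\deg f)^{k/n}\norm{\omega_c}_{f,n/k}$ against the push-forward bound of Lemma \ref{lem:inv_lp_pushforward}, using that $(\deg f)^{-1}\push{f}$ carries representatives of $f^*c$ back into $c$, and concludes by uniqueness of the norm minimizer. The only cosmetic difference is that the paper runs the estimate with the specific representative $\omega_{f^*c}$ rather than an arbitrary $\omega'\in f^*c$, which is an equivalent bookkeeping choice.
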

\begin{proof}
	Note that $(\deg f)^{-1} \push{f} (\omega_{f^*c}) \in (\deg f)^{-1} \push{f} f^*c = c$. Hence, by \eqref{eq:inv_lp_pullback_formula} and \eqref{eq:inv_lp_pushforward_formula}, 
	\begin{align*}
		\norm{f^*\omega_{c}}_{f, \frac{n}{k}}
		&= (\deg f)^\frac{k}{n}\norm{\omega_{c}}_{f, \frac{n}{k}}
		\leq (\deg f)^\frac{k}{n} \norm{(\deg f)^{-1} \push{f} (\omega_{f^*c})}_{f, \frac{n}{k}}\\
		&= (\deg f)^{-\frac{n-k}{n}} \norm{\push{f} (\omega_{f^*c})}_{f, \frac{n}{k}}
		\leq \norm{\omega_{f^*c}}_{f, \frac{n}{k}}.
	\end{align*}
	Since $f^*\omega_c \in f^*c$, the uniqueness of the norm-minimizers implies that $f^*\omega_{c} = \omega_{f^*c}$.
\end{proof}

Note that \eqref{eq:minimizers_multiplication} and \eqref{eq:minimizers_pullback} immediately yield the following corollary, which is crucial for the proof of the main results.

\begin{cor}\label{cor:eigenvalues_translate_to_forms}
	Let $f \colon M \to M$ be a non-constant uniformly quasiregular map on a closed manifold $M$, let $0 < k < n$, and let $\omega_c \in \fharm{k}(M; \C)$. Assume that $f^*c = \lambda c$ for some $\lambda \in \C$. Then
	\[
		f^*\omega_{c} = \lambda\omega_{c}.
	\]
\end{cor}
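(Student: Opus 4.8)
The statement to prove is Corollary~\ref{cor:eigenvalues_translate_to_forms}, which combines the scaling identity \eqref{eq:minimizers_multiplication} and the pull-back identity \eqref{eq:minimizers_pullback} that were just established. The plan is essentially a one-line deduction, so I would present it as such, being careful about the order in which the two identities are applied.

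First I would invoke \eqref{eq:minimizers_pullback}, which gives $f^*\omega_c = \omega_{f^*c}$ as an element of $\fharm{k}(M;\C)$. Then I would use the hypothesis $f^*c = \lambda c$ to rewrite the right-hand side as $\omega_{\lambda c}$. Finally, I would apply the scaling identity \eqref{eq:minimizers_multiplication}, which says $\omega_{\lambda c} = \lambda \omega_c$, to conclude $f^*\omega_c = \lambda \omega_c$. Chaining these three equalities yields the claim:
\[
	f^*\omega_c = \omega_{f^*c} = \omega_{\lambda c} = \lambda \omega_c.
\]

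There is no real obstacle here: the two substantive facts (the pull-back identity, proved via the interplay of the expanding property \eqref{eq:inv_lp_pullback_formula} of $f^*$ and the contracting estimate \eqref{eq:inv_lp_pushforward_formula} for $\push{f}$ together with uniqueness of norm-minimizers, and the scaling identity, which follows from $\lambda c$ being the same affine subspace as $c$ scaled by $\lambda$ so that its minimizer is $\lambda \omega_c$) have already been recorded. The only point requiring a moment's care is that $f^*c$ denotes the induced pull-back in $\cehom{k}(M;\C)$ from Lemma~\ref{lemma:sobolev_de_rham_qr_pullback}, so that the equality $\omega_{f^*c} = \omega_{\lambda c}$ is a genuine identity of forms once $f^*c$ and $\lambda c$ coincide as cohomology classes; this is immediate from the hypothesis. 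Hence the corollary follows directly, and no further argument is needed.
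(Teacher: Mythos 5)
Your proposal is correct and is exactly the paper's argument: the paper states that \eqref{eq:minimizers_multiplication} and \eqref{eq:minimizers_pullback} immediately yield the corollary, i.e.\ the chain $f^*\omega_c = \omega_{f^*c} = \omega_{\lambda c} = \lambda\omega_c$. Nothing further is needed.
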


\subsection{Higher integrability of $f$-harmonic forms}
In what follows, we use the following higher integrability result for the norm minimizing forms $\omega_c \in \fharm{k}(M; \C)$.

\begin{prop}\label{prop:higher_integrability}
	Let $f \colon M \to M$ be a non-constant uniformly quasiregular map, and let $0 < k < n$. Then
	\[
		\fharm{k}(M; \C) \subset L^{\frac{n}{k}, \sharp}(\wedge^k M; \C).
	\]
\end{prop}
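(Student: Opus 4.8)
The plan is to show that each norm-minimizer $\omega_c \in \fharm{k}(M;\C)$ is a solution of a nonlinear Hodge system of $p$-Laplacian type with exponent $p = n/k$ and bounded measurable coefficients, and then to invoke the standard self-improving (Gehring-type) higher integrability estimate for such systems. If $c = 0$ then $\omega_c = 0 \in L^{n/k,\sharp}(\wedge^k M;\C)$ trivially, so we may assume $c \neq 0$.

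First, I would record the Euler--Lagrange equation. By Lemma \ref{lemma:cohomology_class_closedness} and Lemma \ref{lemma:cohomology_class_smooth_elements}, the class $c$ is the closed affine subspace $\gamma + d\cesobt(\wedge^{k-1} M;\C)$ of $L^{n/k}(\wedge^k M;\C)$ for a smooth representative $\gamma \in c$; differentiating $t \mapsto \norm{\omega_c + t\,d\phi}_{f, n/k}^{n/k}$ at $t = 0$ for $\phi \in \cesobt(\wedge^{k-1} M;\C)$, and using also the variation $i\phi$, yields
\[
	\int_M \abs{\omega_c}_f^{\frac{n}{k} - 2} \ip{\omega_c}{d\phi}_f \vol_M = 0 .
\]
Together with $d\omega_c = 0$, this says that $\omega_c$ is an $\cA$-harmonic $k$-form in the sense of nonlinear Hodge theory, for the operator $\cA(\omega) = \abs{\omega}_f^{n/k - 2}\omega$ built from the $f$-invariant inner product $\ip{\cdot}{\cdot}_f$. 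By the comparability \eqref{eq:norm_comparability} coming from the essential boundedness of $G_f$, the coefficients of $\cA$ are measurable and comparable to those of the $p$-Laplacian with $p = n/k \in (1, \infty)$, the comparison constants depending only on $n$ and the distortion constant $K$ of $f$.

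Second, I would localize and apply Gehring's lemma. Cover $M$ by finitely many balls $B$ with each $2B$ contained in a coordinate ball; on $2B$ the closed form $\omega_c$ is exact, $\omega_c = du$ for some $u \in W^{d, n/k}(\wedge^{k-1}(2B);\C)$ by the Poincar\'e lemma for Sobolev forms, and testing the global minimality of $\omega_c$ against variations $d\phi$ with $\phi \in C^\infty_0(\wedge^{k-1}(2B);\C)$ shows that $u$ minimizes $v \mapsto \int_{2B}\abs{dv}_f^{n/k}\vol_M$ among forms with the same trace. The classical Caccioppoli estimate followed by Gehring's lemma then produces an exponent $\eps_B > 0$ and a reverse H\"older inequality forcing $\omega_c \in L^{n/k + \eps_B}(\wedge^k B;\C)$; this is precisely the higher integrability of cohomological norm-minimizers discussed in Iwaniec--Scott--Stroffolini \cite{IwaniecScottStroffolini1999paper} and Bonk--Heinonen \cite[Section 3]{Bonk-Heinonen_Acta}. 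Setting $\eps = \min_B \eps_B > 0$ over the finite cover and summing gives $\omega_c \in L^{n/k + \eps}(\wedge^k M;\C) \subset L^{n/k, \sharp}(\wedge^k M;\C)$. For the complex coefficients one regards $\omega_c$ as the $\R^2$-valued closed $k$-form $(\Re \omega_c, \Im \omega_c)$; since $\abs{\omega_c}_f^2 = \abs{\Re \omega_c}_f^2 + \abs{\Im \omega_c}_f^2$ by \eqref{eq:complex_inner_product}, the localized $u$ is a vector-valued minimizer of the $(n/k)$-energy, to which the same Caccioppoli--Gehring argument applies verbatim. The only genuinely delicate point is that $G_f$ is merely measurable rather than continuous, so the Caccioppoli/Gehring step must tolerate bounded measurable coefficients; this is exactly the framework of nonlinear Hodge theory in \cite{IwaniecScottStroffolini1999paper}, for which such coefficients are admissible, and everything else is routine.
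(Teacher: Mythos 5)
Your proposal is correct in outline, but it is organized differently from the paper's proof. You work directly with the minimizer $\omega_c$: you derive its Euler--Lagrange equation (equivalently, its local minimality against compactly supported variations $d\phi$), localize via the Poincar\'e lemma for Sobolev forms, and run interior Caccioppoli--Gehring higher integrability, treating the complex form as an $\R^2$-valued system. The paper instead proves Lemma \ref{lemma:higher_integrable_hodge_conjugates}: it \emph{constructs}, via the Browder--Minty theorem in complex $L^p$-spaces, a pair $(\phi,\psi)$ solving the conjugate nonlinear Hodge system $\cG_k(\phi+\phi_0)=\psi+\psi_0$ with smooth datum $\phi_0\in c$ and with the higher integrability built in, and only afterwards identifies $\phi+\phi_0$ with $\omega_c$ through a H\"older/orthogonality computation and uniqueness of the norm-minimizer. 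Your route buys the omission of the monotone-operator existence step (the minimizer already exists by uniform convexity), while the paper's route yields the conjugate form $\psi$ as a by-product and sidesteps having to justify that the interior regularity theory applies directly to a vector-valued minimizer. Two cautions on your sketch: first, the ``classical Caccioppoli estimate'' for a $(k-1)$-form potential $u$ is not the scalar one --- since $du$ is not the full differential of $u$, the test-form argument requires the Iwaniec--Lutoborski Sobolev--Poincar\'e inequality for differential forms to control $u$ minus a closed form by $du$, which is precisely where the machinery of \cite{IwaniecScottStroffolini1999paper} enters (and why the paper, noting that for $k\neq n/2$ the complex problem does not decouple into real and imaginary parts, redoes that argument with complex coefficients rather than citing it verbatim); second, ``applies verbatim'' to the $\R^2$-valued minimizer is true in substance because Caccioppoli, reverse H\"older, and Gehring arguments are insensitive to the system structure, but it is a claim that must be checked rather than quoted, since the ISS results are stated for real scalar-coefficient forms. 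With those points made explicit, your argument is a valid alternative proof at the same level of rigor as the paper's own sketch.
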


In the case of forms with real coefficients, Proposition \ref{prop:higher_integrability} follows directly from more general results of Iwaniec, Scott, and Stroffolini; see \cite[Theorems 7.2, 8.4, 9.1, and Remark 9.6]{IwaniecScottStroffolini1999paper}. For $k=n/2 \in \Z$, the complex case follows directly from the real case, since the norm-minimization may be done for the real and imaginary parts seperately due to linearity. For $k \neq n/2$, we prove Proposition \ref{prop:higher_integrability} by following the original proof of Iwaniec, Scott, and Stroffolini.

We use the notation 
\begin{align*}
&\ip{\cdot}{\cdot}_\R = \Re \ip{\cdot}{\cdot} &&\text{and} && \ip{\cdot}{\cdot}_{f, \R} = \Re \ip{\cdot}{\cdot}_f
\end{align*}
for the induced real inner product. Note that these induce the same norms as their complex counterparts, since $\Im \ip{\omega_x}{\omega_x} = 0$ for every $\omega \in \Gamma(M; \C)$ and $x \in M$. Furthermore, we use the notation $G_f^{k,*}$ for the mapping $(\wedge^k T^*M) \otimes \C \to (\wedge^k T^*M) \otimes \C$ defined by
\[
	G_f^{k,*} = \wedge^k \left( \sigma \circ G_f^{-1} \circ \sigma^{-1} \right),
\]
where $\sigma \colon TM \to T^*M$ is the natural bundle map defined in Section \ref{subsect:diff_forms}. The reason to define $G_f^{k,*}$ is that, by \eqref{eq:grassmann} and \eqref{eq:invariant_riemannian_metric}, the inner product $\ip{\cdot}{\cdot}_f$ takes then the form
\[
	\ip{\alpha}{\beta}_f = \ip{G_f^{k, *} \alpha}{\beta}
\]
for $\alpha, \beta \in (\wedge^k T^*_xM) \otimes \C$ and almost every $x \in M$. We now define the operator $\cG_k \colon \Gamma(\wedge^k M; \C) \to \Gamma(\wedge^k M; \C)$ by
\[
	\cG_k(\omega) = \abs{\omega}_f^{\frac{n}{k} - 2} G_f^{k,*} \omega
\]
for $\omega \in \Gamma(\wedge^k M; \C)$.

Proposition \ref{prop:higher_integrability} reduces to the following lemma; for the corresponding stronger version for forms with real coefficients, see \cite[Theorems 8.4 and 9.1]{IwaniecScottStroffolini1999paper}. 

\begin{lemma}\label{lemma:higher_integrable_hodge_conjugates}
	Let $0 < k < n$, and let 
	\begin{align*}
		&\phi_0 \in L^{\frac{n}{k}, \sharp}(\wedge^k M; \C), &&\text{and} && \psi_0 \in L^{\frac{n}{n-k}, \sharp}(\wedge^k M; \C).
	\end{align*}
	Then there exist
	\begin{align*}
		&\phi \in L^{\frac{n}{k}, \sharp}(\wedge^k M; \C) && \text{and} &&  \psi \in L^{\frac{n}{n-k}, \sharp}(\wedge^k M; \C)
	\end{align*}
	satisfying 
	\begin{align}\label{eq:hodge_problem}
		\phi \in \Im d, 
		&&  d^*\psi = 0, && \text{and} 
		&& \cG_k(\phi+\phi_0) = \psi+\psi_0,
	\end{align}
	where $d$ and $d^*$ are the weak exterior and coexterior derivatives, respectively.
\end{lemma}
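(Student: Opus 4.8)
The plan is to recast the lemma as a convex variational problem, solve it in the natural Lebesgue exponents by the direct method, and then promote the solution into the sharp spaces by the Caccioppoli--Gehring bootstrap of Iwaniec, Scott and Stroffolini \cite{IwaniecScottStroffolini1999paper}. The genuinely nontrivial part is the last step, and the role of the whole setup is to reduce it to a situation where their argument applies verbatim modulo a check over $\C$.

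First I would reduce to finding a single exact form. By Lemma \ref{lemma:cohomology_class_closedness} the space $X := d\cesobt(\wedge^{k-1}M;\C)$ is a closed subspace of $L^{n/k}(\wedge^k M;\C)$, hence, by the uniform convexity of $(L^{n/k}(\wedge^k M;\C),\norm{\cdot}_{f,n/k})$ recalled before Corollary \ref{cor:eigenvalues_translate_to_forms} together with the comparability \eqref{eq:norm_comparability}, a reflexive uniformly convex Banach space under $\norm{\cdot}_{f,n/k}$. On $X$ consider the functional
\[
	E(\phi) = \tfrac{k}{n}\norm{\phi+\phi_0}_{f,\frac{n}{k}}^{\frac{n}{k}} - \int_M \ip{\phi}{\psi_0}_\R \vol_M,
\]
where the integral is finite since $(n/k)' = n/(n-k)$ and $\psi_0 \in L^{n/(n-k)}(\wedge^k M;\C)$. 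The functional $E$ is strictly convex, strongly continuous, and, because $n/k > 1$, coercive; hence it is weakly lower semicontinuous and the direct method produces a unique minimizer $\phi \in X$.

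Next I would read off the Euler--Lagrange equation. A pointwise computation gives $\tfrac{d}{dt}\big|_{t=0}\tfrac{k}{n}\abs{\omega+t\eta}_f^{\frac{n}{k}} = \ip{\cG_k(\omega)}{\eta}_\R$, so Gâteaux differentiation of $t \mapsto E(\phi + t\phi')$ at $t = 0$ yields $\int_M \ip{\cG_k(\phi+\phi_0) - \psi_0}{\phi'}_\R \vol_M = 0$ for every $\phi' \in X$; testing also with $i\phi' \in X$ upgrades this to $\int_M \ip{\cG_k(\phi+\phi_0) - \psi_0}{d\beta} \vol_M = 0$ for every $\beta \in \cesobt(\wedge^{k-1}M;\C)$, and in particular for every smooth $\beta$. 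Setting $\psi := \cG_k(\phi+\phi_0) - \psi_0$ this says precisely that $\psi$ is weakly coclosed, while $\phi \in \Im d$ by construction; thus $(\phi,\psi)$ already solves \eqref{eq:hodge_problem} with $\phi \in L^{n/k}(\wedge^k M;\C)$ and $\psi \in L^{n/(n-k)}(\wedge^k M;\C)$, and it remains only to promote these to the sharp spaces.

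Finally I would run the higher-integrability argument of \cite{IwaniecScottStroffolini1999paper}. The operator $\cG_k$ obeys the structural bounds
\[
	\ip{\cG_k(a)-\cG_k(b)}{a-b}_\R \gtrsim (\abs{a}_f+\abs{b}_f)^{\frac{n}{k}-2}\abs{a-b}_f^2, \qquad \abs{\cG_k(a)-\cG_k(b)}_f \lesssim (\abs{a}_f+\abs{b}_f)^{\frac{n}{k}-2}\abs{a-b}_f,
\]
uniformly over $x \in M$ and over complex $k$-covectors $a,b$, with constants depending only on $n$ and $K$; this holds because $G_f^{k,*}$ is $\C$-linear and, with respect to $\ip{\cdot}{\cdot}_\R$, symmetric and positive-definite with eigenvalues bounded between two constants determined by the essential bound of $G_f$. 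With these bounds in hand, the Caccioppoli estimate on balls --- obtained by testing the equation against $d(\eta^s(\alpha - \alpha_B))$, where $\alpha$ is a primitive of $\phi$, $\eta$ a cutoff supported in a concentric double ball, $s$ a suitable exponent, and $\alpha_B$ a suitable correction term --- combined with the Sobolev--Poincar\'e inequality on cubes with dimension-dependent constant recalled in Section \ref{sect:preliminaries}, yields a self-improving reverse H\"older inequality for $\abs{\phi+\phi_0}_f^{n/k}$; Gehring's lemma then gives $\phi + \phi_0 \in L^{n/k+\eps}(\wedge^k M;\C)$ for some $\eps = \eps(n,K) > 0$, where one also uses $\phi_0 \in L^{n/k,\sharp}$. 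Consequently $\phi \in L^{n/k,\sharp}(\wedge^k M;\C)$, and since $\abs{\cG_k(\omega)}_f \approx \abs{\omega}_f^{n/k-1}$ we obtain $\cG_k(\phi+\phi_0) \in L^{(n/k+\eps)/(n/k-1)}(\wedge^k M;\C) \subset L^{n/(n-k),\sharp}(\wedge^k M;\C)$, hence $\psi = \cG_k(\phi+\phi_0) - \psi_0 \in L^{n/(n-k),\sharp}(\wedge^k M;\C)$, as required. I expect the main obstacle to be precisely this last step: the bootstrap of \cite{IwaniecScottStroffolini1999paper} is written for real-valued forms, so one must verify it survives passage to complex coefficients, which ultimately reduces to the displayed ellipticity of $\cG_k$ applied to the real pair $(\Re\phi,\Im\phi)$ --- all the quantities that enter the estimates involving only the norm $\abs{\cdot}_f$.
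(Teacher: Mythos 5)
Your proposal is correct, but the existence step takes a genuinely different route from the paper. The paper follows Iwaniec--Scott--Stroffolini verbatim: it verifies the structural conditions \eqref{eq:hodge_field_1}--\eqref{eq:hodge_field_3} for $\cG_k$, and then obtains the pair $(\phi,\psi)$ by applying the complex Browder--Minty theorem to the monotone operator $\cE\colon \xi \mapsto E\cG_k(\xi+\phi_0)$, where $E$ is the exact projection of the Hodge decomposition (whose continuity over $\C$ must also be checked). You instead exploit the fact that $\cG_k$ is the gradient of the convex functional $\omega \mapsto \tfrac{k}{n}\abs{\omega}_f^{n/k}$ and solve the problem by the direct method on the closed subspace $d\cesobt(\wedge^{k-1}M;\C)$, reading off $d^*\psi=0$ from the Euler--Lagrange equation (your pointwise identity $\tfrac{d}{dt}\big|_{t=0}\tfrac{k}{n}\abs{\omega+t\eta}_f^{n/k}=\ip{\cG_k(\omega)}{\eta}_\R$ is correct, since $\ip{\cG_k(\omega)}{\eta}_\R=\abs{\omega}_f^{n/k-2}\Re\ip{\omega}{\eta}_f$, and testing with $i\phi'$ legitimately upgrades to the full complex orthogonality). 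This buys you a slightly more self-contained existence argument: you avoid the monotone-operator surjectivity theorem and the continuity of the complex projection $E$, you get $\phi$ exact by construction rather than through the choice of domain of $\cE$, and the only functional-analytic input is closedness of $d\cesobt(\wedge^{k-1}M;\C)$ plus reflexivity and uniform convexity of $L^{n/k}$; the price is that you still need the same ellipticity bounds on $\cG_k$ later anyway, so nothing is saved in the hard part. For the higher-integrability step both you and the paper defer to the Caccioppoli/reverse H\"older/Gehring machinery of \cite{IwaniecScottStroffolini1999paper}, with the complex coefficients entering only through the displayed bounds for $\cG_k$ and the norm comparability \eqref{eq:norm_comparability}; your derivation of $\psi \in L^{n/(n-k),\sharp}$ directly from the pointwise relation $\abs{\cG_k(\omega)}\approx\abs{\omega}^{n/k-1}$ and the improved exponent for $\phi+\phi_0$ is a clean shortcut and is consistent with what the reverse H\"older inequality delivers for the pair $(\phi,\psi)$ in the paper's version.
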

\begin{proof}
	[Proof of Proposition \ref{prop:higher_integrability} assuming Lemma \ref{lemma:higher_integrable_hodge_conjugates}]
	Let $c \in \cehom{k}(M; \C)$. By Lem\-ma \ref{lemma:cohomology_class_smooth_elements}, we may fix a smooth $k$-form $\omega_0 \in c$. Let $\phi$ and $\psi$ be as in Lemma \ref{lemma:higher_integrable_hodge_conjugates} with $\phi_0 = \omega_0$ and $\psi_0 = 0$. Since $\phi \in \Im d$, we have by the Sobolev--Poincare inequality \eqref{eq:sobolev_poincare_ineq} that $\phi \in d\cesobt(\wedge^{k-1} M; \C)$. Hence $\phi + \phi_0 \in c$.
	
	Let $\tau \in \cesobt(\wedge^{k-1} M; \C)$. Since $d^*\psi = 0$, we have that
	\begin{equation}\label{eq:hodge_system_vanishing_inner_product}
		\int_M \ip{\psi}{d\tau} \vol_M 
		= \int_M \ip{d^*\psi}{\tau} \vol_M = 0.
	\end{equation}
	Let $\omega \in c$. Then there exists $\tau \in \cesobt(\wedge^{k-1} M; \C)$ for which $\omega = \phi + \phi_0 +d\tau$. By \eqref{eq:hodge_system_vanishing_inner_product}, we have
	\begin{align*}
		\norm{\phi + \phi_0}_{f, \frac{n}{k}}
		&= \left( \int_M \ip{\abs{\phi+\phi_0}_f^{\frac{n}{k} - 2} (\phi + \phi_0)\,}{\phi + \phi_0}_f 
			\vol_M \right)^\frac{k}{n}\\
		&= \left( \int_M \ip{\abs{\phi+\phi_0}_f^{\frac{n}{k} - 2} (\phi + \phi_0)\,}{\omega}_f 
			\vol_M \right)^\frac{k}{n}\\
		&\leq \left( \int_M \abs{\phi+\phi_0}_f^{\frac{n-k}{k}}\abs{\omega}_f \vol_M \right)^\frac{k}{n}\\
		&\leq \left(\norm{\phi + \phi_0}_{f, \frac{n}{k}}\right)^\frac{n-k}{n}
			\left(\norm{\omega}_{f, \frac{n}{k}}\right)^\frac{k}{n}.
	\end{align*}
	Hence $\norm{\phi + \phi_0}_{f, n/k } \leq \norm{\omega}_{f, n/k }$, and $\phi + \phi_0 = \omega_c$ by the uniqueness of the norm-minimizer. The claim now follows, since $\phi + \phi_0 \in L^{n/k, \sharp}(\wedge^k M; \C)$.
\end{proof}

\begin{proof}[Proof of Lemma \ref{lemma:higher_integrable_hodge_conjugates}]
	The proof of Lemma \ref{lemma:higher_integrable_hodge_conjugates} is for the most part identical with the corresponding proof in Iwaniec--Scott--Stroffolini \cite{IwaniecScottStroffolini1999paper} for the case of forms with real coefficients. Hence, we only sketch the proof, with special attention on the minor differences caused by the complex coefficients. 
	
	We claim that $\cG_k$ satisfies the conditions specified in \cite[(8.15)--(8.17)]{IwaniecScottStroffolini1999paper}. More precisely, given $0 < k < n$ and $p = n/k$, there exists a constant $C_\cG = C_\cG(n, k) \geq 1$ with the property that whenever $\xi, \zeta \in L^{n/k}(\wedge^k M)$, we have
	\begin{equation}\label{eq:hodge_field_1}
		\abs{\cG_k(\xi) - \cG_k(\zeta)} 
		\leq C_\cG(\abs{\xi} + \abs{\zeta})^{p - 2}\abs{\xi-\zeta},
	\end{equation}
	\begin{equation}\label{eq:hodge_field_2}
		\ip{\cG_k(\xi) - \cG_k(\zeta)}{\xi-\zeta}_\R 
		\geq C_\cG^{-1}(\abs{\xi} + \abs{\zeta})^{p - 2}\abs{\xi-\zeta}^2,
	\end{equation}
	and
	\begin{equation}\label{eq:hodge_field_3}
		\cG_k(t\xi) 
		= t\abs{t}^{p-2}\cG_k(\xi)
	\end{equation}
	almost everywhere on $M$. Note that condition \eqref{eq:hodge_field_2} is for the induced real inner product, since $\ip{\cG_k(\xi) - \cG_k(\zeta)}{\xi-\zeta}$ may have an imaginary part. 
	
	It is obvious that $\cG_k$ satisfies condition \eqref{eq:hodge_field_3}. By \eqref{eq:norm_comparability}, the conditions \eqref{eq:hodge_field_1} and \eqref{eq:hodge_field_2} for $\cG_k$ reduce to the corresponding conditions for the classical operators $H_p \colon \omega \mapsto \abs{\omega}^{p-2}\omega$, where $1 < p < \infty$. For the proof of condition \eqref{eq:hodge_field_2} for $H_p$, see e.g.\ \cite[Lemma 4 and Corollary 5]{HolopainenPigolaVeronelli2011paper}. Condition \eqref{eq:hodge_field_1} for $H_p$ reduces to condition \eqref{eq:hodge_field_2} for $H_q$, where $1/p + 1/q = 1$; see e.g.\ \cite[Section 12, footnote 16]{Lindqvist2017notes} for the general idea.
	
	Next, we show the existence of solutions $\phi \in L^{n/k}(\wedge^k M; \C)$ and $\psi \in L^{n/(n-k)}(\wedge^k M; \C)$ which satisfy \eqref{eq:hodge_problem}; see \cite[Theorem 8.4]{IwaniecScottStroffolini1999paper}. For this, let $E \colon L^{n/(n-k)}(\wedge^k M; \C) \to dW^{d,n/(n-k)}(\wedge^k M; \C)$ be the operator mapping a form $\xi$ to the exact part $d\alpha$ of its Hodge decomposition $\xi = d\alpha + d^*\beta + \gamma$. Consider the operator $\cE \colon L^{n/k}(\wedge^k M; \C) \to L^{n/(n-k)}(\wedge^k M; \C)$ given by $\xi \mapsto E\cG_k(\xi+\phi_0)$ for $\xi \in L^{n/k}(\wedge^k M; \C)$. It suffices now to show that there exist $\phi \in L^{n/k}(\wedge^k M; \C)$ and $\psi \in L^{n/(n-k)}(\wedge^k M; \C)$ satisfying $\cE (\phi) = E(\psi_0)$ and $\psi = \cG_k(\phi+\phi_0) - \psi_0$. 
	
	It is enough to show that $\cE$ is surjective. This is proven by using the Browder--Minty theorem for complex Banach spaces; see \cite[Theorem 2]{Browder1963paper}. The Browder--Minty theorem requires that $L^{n/k}(\wedge^k M; \C)$ is reflexive, separable, and that its continuous dual is $L^{n/(n-k)}(\wedge^k M; \C)$. By \eqref{eq:complex_lp_estimate_finite_p}, these conditions follow from the corresponding properties in the real case. We also require that the operator $\cE$ is continuous, strictly monotone, and coercive. The verification is based on conditions \eqref{eq:hodge_field_1}--\eqref{eq:hodge_field_3}, and is essentially identical to the one for the real counterpart of $\cE$ by Iwaniec, Scott and Stroffolini \cite[Theorem 8.4]{IwaniecScottStroffolini1999paper}. Note that in our case the verification uses the continuity of the complex exact projection $E$, but this again reduces to the real case \cite[Proposition 5.5]{IwaniecScottStroffolini1999paper} by \eqref{eq:complex_lp_estimate_finite_p}. Hence, the map $\cE$ is surjective, and there exist solutions $\phi \in L^{n/k}(\wedge^k M; \C), \psi \in L^{n/(n-k)}(\wedge^k M; \C)$ for \eqref{eq:hodge_problem}.
	
	Next, the estimate
	\begin{equation}\label{eq:hodge_field_norm_estimate}
		\int_U (\abs{\phi}^\frac{n}{k} + \abs{\psi}^{\frac{n}{n-k}}) \vol_M
		\leq C \int_U (\abs{\phi_0}^\frac{n}{k} + \abs{\psi_0}^\frac{n}{n-k}) \vol_M
	\end{equation}
	is estabilished for $\phi, \phi_0 \in L^{n/k}(\wedge^k U; \C)$ and $\psi, \psi_0 \in L^{n/(n-k)}(\wedge^k U; \C)$ satisfying \eqref{eq:hodge_problem} in a domain $U \subset M$; see \cite[Theorem 8.4]{IwaniecScottStroffolini1999paper}. The proof is by straightforward estimates, and the only notable difference between the real and complex cases is use of the inner product $\ip{\cdot}{\cdot}_\R$. By using the complex version of the Hodge decomposition \eqref{eq:hodge_decomposition}, the estimate \eqref{eq:hodge_field_norm_estimate} yields a Caccioppoli-type inequality for such $\phi, \psi, \phi_0$ and $\psi_0$; see \cite[Theorem 8.8]{IwaniecScottStroffolini1999paper}.
	
	What remains is to follow the proof of \cite[Theorem 9.1]{IwaniecScottStroffolini1999paper} up to the end of what is labelled as \emph{Step 1}. There, by using a suitable chart $\R^n \to U \subset M$ and a compactly supported function $\eta \in C^\infty_0(U)$, the problem is first reduced to a version where the forms $\phi, \psi, \phi_0$ and $\psi_0$ are compactly supported measurable forms on $\R^n$, which in our case have complex coefficients. Then, the Caccioppoli-type inequality is used to derive the reverse Hölder -type inequality
	\begin{equation}\label{eq:reverse_hölder}
		\left(\frac{1}{m_n(Q)}\int_Q F^r  \right)^\frac{1}{r}
		\leq \frac{A}{m_n(2Q)}\int_{2Q} F 
		+ \left(\frac{B}{m_n(2Q)}\int_{2Q} F_0^r  \right)^\frac{1}{r},
	\end{equation}
	where $Q$ is an arbitrary cube on $\R^n$, $F = (\smallabs{\phi}^{n/k} + \smallabs{\psi}^{n/(n-k)})^{1/r}$, $F_0 = (\smallabs{\phi_0}^{n/k} + \smallabs{\psi_0}^{n/(n-k)})^{1/r}$, and $r, A, B$ are constants independent of $Q$. The proof of \eqref{eq:reverse_hölder} uses the Sobolev-Poincar\'e inequality \eqref{eq:sobolev_poincare_ineq} on cubes; in our case, the version used is the one for forms with complex coefficients. Afterwards, the claim follows from Gehring's lemma, see e.g. \cite[Corollary 14.3.1]{IwaniecMartin2001book}.
\end{proof}


\section{Invariant measure}\label{sect:inv_equilibrium_measure}

In this section we recall the invariant measure of Okuyama and Pankka \cite{OkuyamaPankka2014paper} for uniformly quasiregular mappings, and give a streamlined proof for its existence \cite[Theorem 5.2]{OkuyamaPankka2014paper} using the conformal cohomology $\cehom{*}$. This result is slightly stronger than the original version.

\begin{thm}\label{thm:okuyama_pankka_complex_n-forms}
	Let $f \colon M \to M$ be a non-constant uniformly quasiregular self-map on a closed $n$-manifold $M$ satisfying $\deg f \geq 2$. Then, for $\omega \in L^{1, \sharp}(\wedge^n M; \C)$, there exists a complex-valued measure $\mu_\omega$ on $M$ for which
	\[
		\frac{(f^m)^* \omega}{(\deg f)^m} \xrightarrow[m \to \infty]{} \mu_\omega
	\]
	in the weak sense. Furthermore, given another $n$-form $\omega' \in L^{1, \sharp}(\wedge^n M; \C)$, the limit measures $\mu_\omega$ and $\mu_{\omega'}$ satisfy the uniqueness condition
	\[
		\left(\int_M \omega' \right) \mu_\omega = \left(\int_M \omega \right) \mu_{\omega'}.
	\]
\end{thm}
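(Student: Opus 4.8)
The plan is to prove convergence by showing that the sequence $(f^m)^*\omega / (\deg f)^m$ is Cauchy in an appropriate sense, exploiting the fact that $\cehom{*}(M;\C)$ detects exactly the "cohomological obstruction" and that the complementary exact part is handled by the expansion/contraction estimates from Section \ref{sect:cohomology_representation}.

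First I would reduce to the case $\int_M \omega \neq 0$ and $\int_M \omega' \neq 0$: if $\int_M \omega = 0$, then $\omega$ is exact in $\cehom{n}(M;\C)$ (since $\cehom{n}(M;\C) \cong \C$ via integration), so write $\omega = d\tau$ with $\tau \in \cesobt(\wedge^{n-1}M;\C)$, and then $(f^m)^*\omega = d((f^m)^*\tau)$; I would use Lemma \ref{lem:inv_lp_pushforward} applied to $(f^m)^* = $ pull-back together with \eqref{eq:inv_lp_pullback_formula} with $k = n-1$ to show $\norm{(f^m)^*\tau}_{f, n/(n-1)} \leq (\deg f)^{(n-1)m/n}\norm{\tau}_{f,n/(n-1)}$, which is $o((\deg f)^m)$, so $(f^m)^*\omega/(\deg f)^m \to 0$ weakly (testing against a smooth function $\varphi$, integrate by parts: $\int_M \varphi\, d((f^m)^*\tau) = -\int_M d\varphi \wedge (f^m)^*\tau$, bounded by $\norm{d\varphi}_\infty \norm{(f^m)^*\tau}_{n/(n-1)}$ up to constants). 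This forces $\mu_\omega = 0$, which is consistent with the claimed uniqueness relation, so the degenerate cases cause no trouble. For the general case, normalize: it suffices to prove the theorem for a single fixed $\omega_0$ with $\int_M \omega_0 = 1$ (e.g. a smooth volume-type form), since any $\omega$ decomposes as $\omega = (\int_M \omega)\omega_0 + (\omega - (\int_M\omega)\omega_0)$ where the second term integrates to zero and hence contributes $0$ in the limit by the previous step; then one \emph{defines} $\mu_\omega := (\int_M\omega)\mu_{\omega_0}$ and the uniqueness relation is automatic.

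So the heart of the matter is the existence of $\mu_{\omega_0} = \lim_m (f^m)^*\omega_0/(\deg f)^m$. Here I would show the sequence is weakly Cauchy: for $m < m'$, write $(f^{m'})^*\omega_0 = (f^m)^*\big((f^{m'-m})^*\omega_0\big)$, and note that $(f^{m'-m})^*\omega_0/(\deg f)^{m'-m} - \omega_0$ integrates to zero (both have integral $1$, using Lemma \ref{lemma:qr_pushforward_properties}(2)--(3) or directly that pull-back preserves the cohomology class scaled by degree: $\int_M (f^j)^*\omega_0 = (\deg f)^j \int_M \omega_0$). Thus the difference $(f^{m'})^*\omega_0/(\deg f)^{m'} - (f^m)^*\omega_0/(\deg f)^m$ equals $(f^m)^*(\eta)/(\deg f)^m$ where $\eta$ integrates to zero, hence $\eta = d\tau_\eta$ is exact with $\norm{\tau_\eta}_{f,n/(n-1)}$ controlled — but I must be careful that $\norm{\tau_\eta}$ is \emph{uniformly} bounded over all the relevant $\eta$'s, which needs the Sobolev--Poincaré inequality \eqref{eq:sobolev_poincare_ineq} giving $\norm{\tau_\eta}_{n/(n-1)} \leq C\norm{\eta}_1$ and a uniform $L^1$-bound on $\eta$. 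The uniform $L^1$-bound is the crux: I would obtain it by choosing $\omega_0$ cleverly — namely, take $\omega_0 = \abs{\omega_c}_f^{n/k}/\norm{\omega_c}_{f,n/k}^{n/k}\cdot\vol_M$ for an eigenform as in Proposition \ref{prop:invariant_measure_representation} when $M$ is not a rational homology sphere, OR more robustly, observe that $\norm{(f^{m'-m})^*\omega_0}_1 \leq C(\deg f)^{m'-m}\norm{\omega_0}_\infty$ via \eqref{eq:inv_pointwise_pullback_formula} with $k=n$ (which gives $\abs{(f^j)^*\omega_0}_f = (\abs{\omega_0}_f\circ f^j)J_{f^j}$, so $\norm{(f^j)^*\omega_0}_{f,1} = \int_M (\abs{\omega_0}_f\circ f^j)J_{f^j} \leq \norm{\omega_0}_{f,\infty}\int_M J_{f^j} = \norm{\omega_0}_{f,\infty}(\deg f)^j$), hence $\norm{\eta}_1 \lesssim 1$ uniformly. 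Then $\norm{(f^m)^*(d\tau_\eta)/(\deg f)^m}$ tested against smooth $\varphi$ is $\lesssim (\deg f)^{-m}\cdot(\deg f)^{(n-1)m/n}\norm{d\varphi}_\infty \to 0$, uniformly in $m'$, proving the Cauchy property in the weak-$*$ topology on measures, which is complete.

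The main obstacle I anticipate is the uniform $L^1$-control just discussed, combined with the mismatch in integrability exponents: $(f^m)^*\omega_0$ only lives in $L^1$ (the conformal exponent for $n$-forms), the altered space $\cesobt(\wedge^n M;\C) = L^{1,\sharp}$ requires slightly-better-than-$L^1$, and the Sobolev--Poincaré primitive $\tau_\eta$ lives in $L^{n/(n-1)}(\wedge^{n-1}M;\C)$ — so one must track carefully that $d\tau_\eta$ being merely $L^1$ is still enough to run the integration-by-parts pairing against a \emph{smooth} test function (it is, since smooth functions are dense enough and $\norm{d\varphi}_\infty < \infty$). A secondary subtlety is that weak convergence of $n$-forms to a complex measure should be interpreted as $\int_M \varphi\,(f^m)^*\omega/(\deg f)^m \to \int_M \varphi\,d\mu_\omega$ for all $\varphi \in C^\infty(M;\C)$ (equivalently $C(M;\C)$), and one should check the limit functional is bounded on $C(M)$ — which follows from the uniform bound $\abs{\int_M \varphi\,(f^m)^*\omega_0/(\deg f)^m}\leq \norm{\varphi}_\infty\norm{\omega_0}_{f,\infty}$ obtained above — so by Riesz representation the limit is indeed a complex measure. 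Everything else (the degenerate cases, the normalization, the uniqueness relation) is then bookkeeping.
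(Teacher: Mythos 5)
Your reductions (the zero-integral case, and the normalization to a single $\omega_0$ with $\int_M\omega_0=1$, from which the uniqueness relation is bookkeeping) are sound and consistent with the paper. The genuine gap is in your Cauchy argument for $\omega_0$. You need, for every $j=m'-m$, a primitive $\tau_j$ of $\eta_j=(f^{j})^*\omega_0/(\deg f)^{j}-\omega_0$ with $\sup_j\norm{\tau_j}_{n/(n-1)}<\infty$, and you propose to get it from a Sobolev--Poincar\'e estimate of the form $\norm{\tau_\eta}_{n/(n-1)}\leq C\norm{\eta}_1$ together with a uniform $L^1$ bound on the $\eta_j$. But \eqref{eq:sobolev_poincare_ineq} is stated (and true) only for $q\in(1,\infty)$, and the endpoint $q=1$ version you invoke is false: if every zero-integral $n$-form $\eta\in L^1$ had a primitive with $\norm{\tau}_{n/(n-1)}\leq C\norm{\eta}_1$, then pairing $d\tau=\eta$ with a smooth function and integrating by parts would give $\inf_c\norm{u-c}_\infty\leq C\norm{du}_n$ for all smooth $u$, i.e.\ the critical embedding $W^{1,n}\subset L^\infty$, which fails for $n\geq2$. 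This is precisely why the altered complex uses $L^{1,\sharp}$ rather than $L^1$ in degree $n$: an $L^1$ bound on $\eta_j$ controls no $L^{n/(n-1)}$ primitive. (A uniform $L^{1+\eps}$ bound on $\eta_j$ would suffice, since then $q=1+\eps$ is admissible, but your estimate $\norm{(f^{j})^*\omega_0}_1\lesssim(\deg f)^{j}\norm{\omega_0}_\infty$ gives only $L^1$; a uniform-in-$j$ higher-integrability bound would need a global reverse-H\"older inequality for $J_{f^{j}}$, which you do not establish. Also, the bound $\norm{(f^m)^*\tau}_{f,n/(n-1)}=(\deg f)^{m(n-1)/n}\norm{\tau}_{f,n/(n-1)}$ is \eqref{eq:inv_lp_pullback_formula}, not Lemma \ref{lem:inv_lp_pushforward}.) Your alternative choice of $\omega_0$ via the eigenform of Proposition \ref{prop:invariant_measure_representation} is not available on rational homology spheres such as $\S^n$, so it cannot prove the theorem as stated.

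The paper avoids the uniformity issue altogether: since $f^*[\omega]=(\deg f)[\omega]$ in $\cehom{n}(M;\C)$, one fixes a \emph{single} $\tau\in\cesobt(\wedge^{n-1}M;\C)$ with $d\tau=f^*\omega/(\deg f)-\omega$ (here the hypothesis $\omega\in L^{1,\sharp}$ is what puts $\omega$ in the altered complex and produces an $L^{n/(n-1)}$ primitive); then every \emph{consecutive} difference of the sequence is $(f^m)^*d\tau/(\deg f)^m$, and the chain-map property plus \eqref{eq:inv_lp_pullback_formula} bound its pairing with a smooth $\eta$ by $C(\deg f)^{-m/n}\norm{d\eta}_n\norm{\tau}_{f,n/(n-1)}$, which is geometrically summable, so the sequence is weakly Cauchy with no family of primitives to control. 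If you replace your pairwise comparison of $m$ and $m'$ by this telescoping over consecutive indices (equivalently, write $\eta_j=\sum_{i<j}(f^i)^*d\tau/(\deg f)^i$ for the fixed $\tau$), your argument closes and coincides with the paper's; your remaining points (total-variation bounds giving that the weak limit is indeed a measure via Riesz representation, and the uniqueness relation by linearity) are fine.
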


Recall that a sequence $(\mu_m)$ of complex-valued measures on a closed manifold $M$ \emph{converges weakly to a complex-valued measure $\mu$ on $M$} if
\[
	\int_M \eta d\mu_m \xrightarrow[m \to \infty]{} \int_M \eta d\mu
\]
for all smooth test functions $\eta \in C^\infty(M; \C)$, or equivalently for all $\eta \in C^\infty(M)$. Note that for a real $n$-form $\omega$ the limit measure $\mu_\omega$ in Theorem \ref{thm:okuyama_pankka_complex_n-forms} is clearly a real-valued signed measure.

The original result \cite[Theorem 5.2]{OkuyamaPankka2014paper} is for real probability measures, but generalizes easily to the complex case when applied to real and imaginary parts seperately. However, the minor improvement in Theorem \ref{thm:okuyama_pankka_complex_n-forms} is that $\omega$ is assumed to be in $L^{1, \sharp}(\wedge^n M)$, whereas in  \cite[Theorem 5.2]{OkuyamaPankka2014paper} $\omega$ is assumed to be in $L^p(\wedge^n M)$ for a given $p > 1$ depending only on $n$ and the distortion constant $K$ of $f$.

\begin{proof}[Proof of Theorem \ref{thm:okuyama_pankka_complex_n-forms}]
	Let $\omega \in L^{1, \sharp}(\wedge^n M; \C)$, and let $c = [\omega]$ be the cohomology class of $\omega$ in $\cehom{n}(M; \C)$. Since $f^*c = (\deg f)c$, there exists $\tau \in \cesobt(\wedge^{n-1}M; \C) \subset L^{n/(n-1)}(\wedge^kM; \C)$ for which 
	\[
		\frac{f^* \omega}{\deg f} - \omega = d\tau.
	\]
	
	Let $\eta \in C^\infty(M)$, and let $m \in \Z_+$. Then 
	\begin{align*}
		\abs{\int_M \eta\left(
			\frac{(f^{m+1})^* \omega}{(\deg f)^{m+1}} - \frac{(f^{m})^* \omega}{(\deg f)^{m}}\right)}
		&= \frac{1}{(\deg f)^m} \abs{\int_M \eta \, (f^m)^*d\tau}
	\end{align*}
	Since $f^* \colon \cesobt(\wedge^* M) \to \cesobt(\wedge^* M)$ is a chain map, we obtain
	\begin{align*}
		\abs{\int_M \eta \, (f^m)^*d\tau}
		=\abs{\int_M \eta \, d\left((f^m)^*\tau\right)}
		= \abs{\int_M d\eta \wedge (f^m)^*\tau}
	\end{align*}
	Now, by Hölder's inequality, \eqref{eq:norm_comparability} and \eqref{eq:inv_lp_pullback_formula}, we obtain the estimate
	\begin{align*}
		\abs{\int_M \eta\left(
			\frac{(f^{m+1})^* \omega}{(\deg f)^{m+1}} - \frac{(f^{m})^* \omega}{(\deg f)^{m}}\right)}
		&= \frac{1}{(\deg f)^m} \abs{\int_M d\eta \wedge (f^m)^*\tau}\\
		&\leq \frac{1}{(\deg f)^m} \norm{d\eta}_{n} \norm{(f^m)^*\tau}_{\frac{n}{n-1}}\\
		&\leq \frac{C}{(\deg f)^m} \norm{d\eta}_{n} \norm{(f^m)^*\tau}_{f, \frac{n}{n-1}}\\
		&= \frac{C}{(\deg f)^{\frac{m}{n}}} \norm{d\eta}_{n} \norm{\tau}_{f, \frac{n}{n-1}},
	\end{align*}
	where $C$ is the constant in \eqref{eq:norm_comparability}. Hence, the sequence $((\deg f)^{-m}(f^m)^*\omega)$ is Cauchy in the weak sense, and therefore has a weak limit $\mu_\omega$.
	
	Let now $\omega, \omega' \in L^{1, \sharp}(\wedge^n M; \C)$ with $\int_M \omega = \int_M \omega'$. Then $\omega$ and $\omega'$ belong in the same cohomology class $c \in \cehom{n}(M; \C)$ and $\omega - \omega' = d\tau$ for some $\tau \in \cesobt(\wedge^{n-1}M; \C)$. Now, by the same calculation as before, we have for every $\eta \in C^\infty(M)$ the estimate
	\begin{align*}
	\lim_{m \to \infty}\abs{\int_M \eta \left(
			\frac{(f^{m})^* \omega}{(\deg f)^{m}} - \frac{(f^{m})^* \omega'}{(\deg f)^{m}}\right)}
		&= \lim_{m \to \infty}\frac{1}{(\deg f)^m} \abs{\int_M \eta \, (f^m)^*d\tau}\\
		&\leq \lim_{m \to \infty}
			\frac{C}{(\deg f)^{\frac{m}{n}}} \norm{d\eta}_{n} \norm{\tau}_{f, \frac{n}{n-1}} 
		= 0.
	\end{align*}
	Hence, $\mu_\omega = \mu_{\omega'}$. The desired uniqueness condition now follows by linearity, since $(\int_M \omega)\omega'$ and $(\int_M \omega')\omega$ have the same integral over $M$ for all $\omega, \omega' \in L^{1, \sharp}(\wedge^n M; \C)$.
\end{proof}

We define the \emph{invariant measure $\mu_f$ of $f$} by $\mu_f = \mu_{\omega_0}$, where $\omega_0$ is the $n$-form $m_n(M)^{-1} \vol_M$, and $m_n$ is the Lebesgue measure on $M$. Under this notation, $\mu_\omega = (\int_M \omega)\mu_f$ for all $\omega \in L^{1, \sharp}(\wedge^k M; \C)$. Note that, since $f^*\vol_M = J_f \vol_M$ and quasiregular mappings have a positive Jacobian almost everywhere, the measure $\mu_f$ is a probability measure.

For technical reasons, we record the following variation of Theorem \ref{thm:okuyama_pankka_complex_n-forms}, which is obtained as an easy corollary of the proof of Theorem \ref{thm:okuyama_pankka_complex_n-forms}.

\begin{cor}\label{cor:okuyama_pankka_technical_variant}
	Let $f \colon M \to M$ be a non-constant uniformly quasiregular self-map on a closed $n$-manifold $M$ satisfying $\deg f \geq 2$. Then, for all $\omega \in L^{1, \sharp}(\wedge^n M; \C)$ satisfying $\int_M \omega = 0$ and all $\lambda \in \C$ with $\abs{\lambda} = \deg f$, we obtain
	\[
		\frac{(f^m)^* \omega}{\lambda^m} \xrightarrow[m \to \infty]{} 0
	\]
	in the weak sense.
\end{cor}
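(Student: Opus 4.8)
The plan is to rerun the estimate from the proof of Theorem \ref{thm:okuyama_pankka_complex_n-forms} essentially verbatim. The only two modifications needed are: (i) the hypothesis $\int_M \omega = 0$ now forces the cohomology class of $\omega$ to vanish outright, rather than merely being $f^*$-invariant, which is what lets us write $\omega = d\tau$ directly; and (ii) the normalising factor $(\deg f)^m$ is replaced by $\lambda^m$, which costs nothing since $\abs{\lambda} = \deg f$ and the estimate in that proof in fact produces decay of order $(\deg f)^{m(n-1)/n}$, strictly faster than $(\deg f)^m$ after division.

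First I would observe that $\omega$ represents the zero class in $\cehom{n}(M; \C)$. By Lemma \ref{lemma:sobolev_de_rham_isomorphism} together with \eqref{eq:complex_coefficients_in_conf_cohomology}, the space $\cehom{n}(M; \C)$ is one-dimensional; moreover $[\omega] \mapsto \int_M \omega$ is a well-defined functional on it, since forms in $d\cesobt(\wedge^{n-1} M; \C)$ integrate to zero over the closed manifold $M$, and it is non-zero since $\int_M \vol_M > 0$. Hence its kernel is trivial, so $\int_M \omega = 0$ yields $[\omega] = 0$, i.e.\ there exists $\tau \in \cesobt(\wedge^{n-1} M; \C) \subset L^{n/(n-1)}(\wedge^{n-1} M; \C)$ with $\omega = d\tau$.

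Next, since $f^* \colon \cesobt(\wedge^* M; \C) \to \cesobt(\wedge^* M; \C)$ is a chain map (the complex version of Lemma \ref{lemma:sobolev_de_rham_qr_pullback}), so is $(f^m)^*$, and thus $(f^m)^*\omega = (f^m)^* d\tau = d\big((f^m)^*\tau\big)$ with $(f^m)^*\tau \in \cesobt(\wedge^{n-1} M; \C)$. For a test function $\eta \in C^\infty(M)$ I would integrate by parts on $M$,
\[
	\int_M \eta\, (f^m)^*\omega = \int_M \eta\, d\big((f^m)^*\tau\big) = -\int_M d\eta \wedge (f^m)^*\tau,
\]
and then estimate the right-hand side using Hölder's inequality with exponents $n$ and $n/(n-1)$, the comparability \eqref{eq:norm_comparability}, and the $m$-fold iterate of the pull-back identity \eqref{eq:inv_lp_pullback_formula} (with $k = n-1$), which gives $\norm{(f^m)^*\tau}_{f, n/(n-1)} = (\deg f)^{m(n-1)/n}\norm{\tau}_{f, n/(n-1)}$. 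Altogether, since $\abs{\lambda} = \deg f$,
\[
	\abs{\int_M \eta\, \frac{(f^m)^*\omega}{\lambda^m}} \leq \frac{C\,\norm{d\eta}_n \norm{\tau}_{f, \frac{n}{n-1}}}{\abs{\lambda}^m}\,(\deg f)^{\frac{m(n-1)}{n}} = C\,\norm{d\eta}_n \norm{\tau}_{f, \frac{n}{n-1}}\,(\deg f)^{-\frac{m}{n}},
\]
where $C$ is the constant of \eqref{eq:norm_comparability}. As $\deg f \geq 2$, the right-hand side tends to $0$ as $m \to \infty$ for every fixed $\eta$, which is exactly the asserted weak convergence $(f^m)^*\omega/\lambda^m \to 0$.

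There is no genuine obstacle here beyond bookkeeping; the single point worth emphasising is the one already noted — that the relevant estimate decays like $(\deg f)^{m(n-1)/n}$ rather than $(\deg f)^m$, so after dividing by $\abs{\lambda}^m = (\deg f)^m$ a geometrically decaying factor $(\deg f)^{-m/n}$ survives, and this is precisely why $\deg f$ may be replaced by any $\lambda$ of the same modulus without affecting convergence to $0$.
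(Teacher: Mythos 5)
Your argument is correct and is essentially the paper's own proof: write $\omega = d\tau$ with $\tau \in \cesobt(\wedge^{n-1}M;\C)$ (the paper asserts this directly from $\int_M \omega = 0$; your justification via the integration functional on the one-dimensional $\cehom{n}(M;\C)$ is fine), use the chain-map property, integrate by parts against a smooth test function, and estimate via H\"older, \eqref{eq:norm_comparability}, and the iterated identity \eqref{eq:inv_lp_pullback_formula}, yielding the same $(\deg f)^{-m/n}$ decay after dividing by $\abs{\lambda}^m = (\deg f)^m$. No gaps; the sign in the integration by parts is immaterial since absolute values are taken.
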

\begin{proof}
	Since $\int_M \omega = 0$, there exists $\tau \in \cesobt(\wedge^{n-1}M; \C)$ for which $\omega = d\tau$. Now, as in the proof of Theorem \ref{thm:okuyama_pankka_complex_n-forms}, we obtain
	\begin{align*}
		\lim_{m \to \infty}\abs{\int_M \eta\frac{(f^m)^* \omega}{\lambda^m}}
		&= \lim_{m \to \infty}\frac{1}{(\deg f)^m} \abs{\int_M \eta \, (f^m)^*d\tau}\\
		&\leq \lim_{m \to \infty}\frac{C}{(\deg f)^{\frac{m}{n}}} 
			\norm{d\eta}_{n} \norm{\tau}_{f, \frac{n}{n-1}}
		= 0,
	\end{align*}
	for all test forms $\eta \in C^\infty(M)$, where $C$ is again the constant in \eqref{eq:norm_comparability}. This yields the desired result.
\end{proof}

A key property of the measure $\mu_f$ is that its support is the Julia set of $f$.

\begin{thm}[{\cite[Theorem 1.2]{OkuyamaPankka2014paper}}]\label{thm:okuyama_pankka_properties}
	Let $f \colon M \to M$ be a non-constant uniformly quasiregular self-map on a closed $n$-manifold $M$ satisfying $\deg f \geq 2$, and let $\mu_f$ be the invariant measure of $f$. Then
	\[
		\spt \mu_f = \Julia_f.
	\]
\end{thm}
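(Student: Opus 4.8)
The plan is to establish the two inclusions $\spt\mu_f\subseteq\Julia_f$ and $\Julia_f\subseteq\spt\mu_f$ separately, combining the construction of $\mu_f$ from Theorem~\ref{thm:okuyama_pankka_complex_n-forms} with classical facts from the Fatou--Julia theory of uniformly quasiregular maps: normality on the Fatou set, openness and local degree theory for quasiregular maps, finiteness of the exceptional set $E_f$ together with $E_f\subseteq\Fatou_f$, and the blow-up property that $\bigcup_{m\geq 0}f^m(U)$ covers $M$ up to $E_f$ whenever $U$ is an open neighborhood of a point of $\Julia_f$; see \cite{HinkkanenMartinMayer2004paper}. Throughout, write $\omega_0=m_n(M)^{-1}\vol_M$ and $\mu_m=(\deg f)^{-m}(f^m)^*\omega_0$, so that by Theorem~\ref{thm:okuyama_pankka_complex_n-forms} and the definition of $\mu_f$ the $\mu_m$ are probability measures converging weakly to $\mu_f$, and as measures $\mu_m=(\deg f)^{-m}m_n(M)^{-1}J_{f^m}\vol_M$.

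For the inclusion $\spt\mu_f\subseteq\Julia_f$, it suffices to prove $\mu_f(V)=0$ for every open ball $V$ whose closure lies in a set on which $\{f^m\}$ is normal; since $\Fatou_f$ is a countable union of such balls, this gives $\mu_f(\Fatou_f)=0$. Fix such a $V$. By the portmanteau theorem $\mu_f(V)\leq\liminf_m\mu_m(V)$, and $\mu_m(V)=(\deg f)^{-m}m_n(M)^{-1}\int_V J_{f^m}\vol_M$. Choose a subsequence $(m_j)$ realizing this $\liminf$, and by normality pass to a further subsequence along which $f^{m_j}$ converges locally uniformly to a quasiregular (possibly constant) map $g$. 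Since a locally uniform limit of uniformly $K$-quasiregular maps is $K$-quasiregular and the convergence holds in $W^{1,n}_{\mathrm{loc}}$ with the Jacobians converging weakly as measures, the integrals $\int_V J_{f^{m_j}}\vol_M$ stay bounded (they converge to $\int_V J_g\vol_M<\infty$, which vanishes if $g$ is constant). As $(\deg f)^{-m_j}\to 0$, we conclude $\mu_f(V)\leq\liminf_m\mu_m(V)=0$.

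For the inclusion $\Julia_f\subseteq\spt\mu_f$, I would first record that $\mu_f$ is balanced. Writing $(T_h\eta)(x)=\sum_{y\in h^{-1}(x)}\eta(y)$ for the transfer operator of a quasiregular branched cover $h$ (preimages counted with multiplicity), the operator $T_f$ maps $C(M)$ into $C(M)$, and the change-of-variables identity $\int_M(T_f\eta)\,d\mu_m=(\deg f)\int_M\eta\,d\mu_{m+1}$ yields, on passing to the weak limit, $\int_M(T_f\eta)\,d\mu_f=(\deg f)\int_M\eta\,d\mu_f$ for all $\eta\in C(M)$; iterating (and extending from continuous $\eta$ to lower semicontinuous $\eta$ by monotone approximation) gives $\int_M(T_{f^m}\eta)\,d\mu_f=(\deg f)^m\int_M\eta\,d\mu_f$. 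Since $T_{f^m}(\mathbf 1_U)\geq\mathbf 1_{f^m(U)}$ pointwise, this implies $\mu_f(f^m(U))\leq(\deg f)^m\mu_f(U)$ for every open $U$. Now assume $\Julia_f\not\subseteq\spt\mu_f$; then some point of $\Julia_f$ has an open neighborhood $U$ with $\mu_f(U)=0$, hence $\mu_f(f^m(U))=0$ for all $m$ and $\mu_f\big(\bigcup_{m}f^m(U)\big)=0$. By the blow-up property $\bigcup_m f^m(U)$ covers $M$ up to the finite set $E_f$, and $\mu_f(E_f)=0$ because $E_f\subseteq\Fatou_f$ and $\mu_f(\Fatou_f)=0$ by the first part; therefore $\mu_f(M)=0$, contradicting that $\mu_f$ is a probability measure.

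Conceptually this is the Brolin--Lyubich--Freire--Lopes--Ma\~n\'e scheme adapted to the quasiregular setting, and both halves are essentially routine once the right tools are in place. The step I expect to require the most care is the quasiregular compactness input in the first inclusion --- that locally uniform convergence of uniformly $K$-quasiregular maps forces $W^{1,n}_{\mathrm{loc}}$-convergence with weakly convergent Jacobians, so that the Jacobian mass over the fixed ball $V$ cannot grow along the chosen subsequence --- followed, in the second inclusion, by the bookkeeping needed to make the balanced identity precise and to invoke the blow-up property and the inclusion $E_f\subseteq\Fatou_f$ rigorously.
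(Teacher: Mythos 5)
The paper does not actually prove this theorem: it is quoted from Okuyama--Pankka, and the text explicitly defers the proof to Section 6 of \cite{OkuyamaPankka2014paper}. So there is no internal argument to compare against; what you have written is a reconstruction along the classical Brolin--Lyubich--Freire--Lopes--Ma\~n\'e scheme, and the overall architecture (normality kills the mass of $\Fatou_f$ in the limit; balancedness plus the blow-up property force every neighborhood of a Julia point to carry mass) is reasonable and in the same spirit as the cited source. The derivation of the balanced identity via the transfer operator is consistent with the push-forward machinery of Lemma \ref{lemma:qr_pushforward_properties}, and the extension to indicators of open sets by lower semicontinuous approximation is fine.

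Two points need repair, one minor and one substantive. First, in the inclusion $\spt\mu_f\subseteq\Julia_f$ your justification of the bound on $\int_V J_{f^{m_j}}\vol_M$ is circular as written: weak $W^{1,n}_{\mathrm{loc}}$-convergence and weak convergence of the Jacobians are themselves consequences of a uniform local bound on $\norm{Df^{m_j}}_{L^n}$, which is exactly what is to be shown. The correct primitive is a Caccioppoli-type estimate: since $f^{m_j}\to g$ uniformly on a neighborhood of $\overline V$, one covers $\overline V$ by finitely many small balls whose images eventually lie in fixed bi-Lipschitz charts, and there $\int_B \smallnorm{Df^{m_j}}^n \leq C(n,K)\,r^{-n}\int_{2B}\abs{f^{m_j}-c}^n$ is uniformly bounded; this gives the needed bound directly, after which $(\deg f)^{-m_j}\to 0$ finishes the argument. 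Second, and more seriously, the inclusion $\Julia_f\subseteq\spt\mu_f$ leans on the blow-up property and on the exceptional set $E_f$ being finite and contained in $\Fatou_f$, \emph{on an arbitrary closed manifold}. Citing \cite{HinkkanenMartinMayer2004paper} does not cover this: that paper concerns local dynamics, essentially in the sphere setting, while the global statements you need (a Rickman--Montel or Zalcman-type normality criterion for $K$-quasiregular families into a general closed target, finiteness of $E_f$, and $E_f\subseteq\Fatou_f$) are precisely the nontrivial dynamical input in this generality; they must be proved or properly sourced, and this is essentially what the deferred Section 6 of \cite{OkuyamaPankka2014paper} supplies. Note also that you cannot fall back on ``the omitted set has zero Lebesgue measure or capacity,'' since absolute continuity of $\mu_f$ is only available later and only when $M$ is not a rational homology sphere; you genuinely need the omitted set to be $\mu_f$-null, which is why the exceptional-set statements cannot be waved through.
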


For the proof, see \cite[Section 6]{OkuyamaPankka2014paper}. This property is important, since it reduces the second claim of Theorem \ref{thm:positive_julia_set} that $m_n(\Julia_f) > 0$ to the first claim of Theorem \ref{thm:positive_julia_set} that $\mu_f$ is absolutely continuous with respect to the Lebesgue measure $m_n$.


\section{Proofs of the main results}\label{sect:main_results}

In this section, we prove Theorems \ref{thm:positive_julia_set} and \ref{thm:dynamical_bonk_heinonen}. Throughout this section $M$ is a closed $n$-manifold and $f \colon M \to M$ is a non-constant uniformly quasiregular map on $M$ with $\deg f \geq 2$.

We fix some terminology for the sake of presentation. Let $\lambda \in \C \setminus \{0\}$ and $0 \leq k \leq n$. A cohomology class $c \in \cehom{k}(M; \C) \setminus \{0\}$ is a \emph{$k$-eigenclass of $f$ with eigenvalue $\lambda$} if $f^*c = \lambda c$. Similarly, a differential form $\omega \in \Gamma(\wedge^k M; \C) \setminus \{0\}$ is a \emph{$k$-eigenform of $f$ with eigenvalue $\lambda$} if $f^*\omega = \lambda\omega$. The \emph{cohomological eigenspace of $f$ corresponding to the eigenvalue $\lambda$} is the complex vector subspace
\[
	E^k(f; \lambda) = \left\{c \in \cehom{k}(M; \C) : f^*c = \lambda c\right\}.
\]
of $\cehom{k}(M; \C)$. Finally, we say that two complex differential forms $\omega, \omega' \in \Gamma(\wedge^k M; \C)$ are \emph{complex $f$-orthogonal at a point $x \in M$} if $\ip{\omega_x}{\omega_x'}_f = 0$, and also that $\omega$ and $\omega'$ are \emph{complex $f$-orthogonal almost everywhere} if they are complex $f$-orthogonal at almost every point $x \in M$. 

\subsection{Proof of Theorem \ref{thm:positive_julia_set}} 

Recall the statement of Theorem \ref{thm:positive_julia_set}, which under our assumptions on $f$ and $M$ is as follows.

\begin{customthm}{\ref{thm:positive_julia_set}}
	If $M$ is not a rational homology sphere, then $\mu_f$ is absolutely continuous with respect to $m_n$. 
\end{customthm}

We obtain Theorem \ref{thm:positive_julia_set} from the following lemma.

\begin{lemma}\label{lemma:equilibrium_measure_integral_representation}
	Let $0 < k \leq n$, and suppose there exists a $k$-eigenform $\omega$ of $f$ satisfying $\omega \in L^{n/k, \sharp}(\wedge^k M; \C)$ and $\norm{\omega}_{n/k, f} = 1$. Then the invariant equilibrium measure $\mu_f$ of $f$ has a representation
	\[
		\mu_f = \abs{\omega}_f^\frac{n}{k} \vol_M
	\]
	as a Lebesgue measurable $n$-form. Consequently, $\mu_f$ is absolutely continuous with respect to the Lebesgue measure $m_n$ on $M$, and $\spt \omega = \spt \mu_f = \Julia_f$.
\end{lemma}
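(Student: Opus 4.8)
The plan is to identify the $n$-form $|\omega|_f^{n/k}\vol_M$ as a fixed point of the renormalized pull-back operator $\eta\mapsto (\deg f)^{-1} f^*\eta$ acting on $n$-forms, and then invoke the uniqueness in Theorem~\ref{thm:okuyama_pankka_complex_n-forms}. First I would compute, using the pointwise pull-back identity \eqref{eq:inv_pointwise_pullback_formula} applied to $\omega$, that $|f^*\omega|_f^{n/k} = (|\omega|_f^{n/k}\circ f)\, J_f$ almost everywhere. Since $\omega$ is a $k$-eigenform, $f^*\omega = \lambda\omega$ with $|\lambda| = \deg f$ by \eqref{eq:inv_lp_pullback_formula} together with $\norm{\omega}_{f,n/k}=1$ being preserved up to the factor $(\deg f)^{k/n}$; more precisely $\norm{f^*\omega}_{f,n/k} = |\lambda|\cdot 1 = (\deg f)^{k/n}$, forcing $|\lambda| = (\deg f)^{k/n}\cdot(\deg f)^{1-k/n}$... so I should instead read off $|\lambda|^{n/k}$ directly: $|f^*\omega|_f^{n/k} = |\lambda|^{n/k}|\omega|_f^{n/k}$, and comparing with the change-of-variables identity above gives $|\lambda|^{n/k}|\omega|_f^{n/k} = (|\omega|_f^{n/k}\circ f)J_f$. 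Recalling $f^*\vol_M = J_f\vol_M$, this rearranges to
\[
	f^*\big(|\omega|_f^{n/k}\vol_M\big) = |\lambda|^{n/k}\,\big(|\omega|_f^{n/k}\vol_M\big),
\]
and since integrating the left side over $M$ equals (by Lemma~\ref{lemma:qr_pushforward_properties}(3) applied suitably, or by the quasiregular change of variables) $\deg f$ times the integral of the right, while both $n$-forms have equal integral, we get $|\lambda|^{n/k} = \deg f$; hence $f^*\big(|\omega|_f^{n/k}\vol_M\big) = (\deg f)\,|\omega|_f^{n/k}\vol_M$.

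Next I would check the normalization: by the quasiregular change of variables and $\norm{\omega}_{f,n/k}=1$ we have $\int_M |\omega|_f^{n/k}\vol_M = 1$, so $\nu := |\omega|_f^{n/k}\vol_M$ is a (real, nonnegative) probability measure. Iterating the fixed-point identity gives $(\deg f)^{-m}(f^m)^*\nu = \nu$ for all $m$, so the weak limit defining $\mu_\nu$ in Theorem~\ref{thm:okuyama_pankka_complex_n-forms} is simply $\nu$ itself. On the other hand, $\mu_\nu = \big(\int_M \nu\big)\mu_f = \mu_f$ by the uniqueness clause of that theorem (or by the definition $\mu_\omega = (\int_M\omega)\mu_f$). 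Therefore $\mu_f = |\omega|_f^{n/k}\vol_M$, which is the claimed representation. Absolute continuity with respect to $m_n$ is then immediate since $|\omega|_f^{n/k}$ is an $L^1$ density against $\vol_M$ (and $\vol_M$ is comparable to $m_n$), and $\spt\omega = \spt\mu_f$ follows because $|\omega|_f$ and $|\omega|$ are comparable by \eqref{eq:norm_comparability}, so the density vanishes exactly where $\omega$ does; combined with Theorem~\ref{thm:okuyama_pankka_properties} this gives $\spt\omega = \spt\mu_f = \Julia_f$.

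The main obstacle I anticipate is justifying that $\nu = |\omega|_f^{n/k}\vol_M$ is a legitimate input to Theorem~\ref{thm:okuyama_pankka_complex_n-forms}, i.e.\ that $\nu \in L^{1,\sharp}(\wedge^n M;\C)$: this is exactly where the hypothesis $\omega \in L^{n/k,\sharp}(\wedge^k M;\C)$ is used, since $\omega$ lying in $L^{s}$ for some $s > n/k$ gives $|\omega|_f^{n/k} \in L^{sk/n}$ with $sk/n > 1$, so $\nu$ has slightly-better-than-$L^1$ density and lies in $L^{1,\sharp}(\wedge^n M)$. A secondary point requiring care is the bookkeeping on the eigenvalue modulus $|\lambda|$, which I handled above by comparing integrals; one must be careful that the relevant pull-back change-of-variables and \eqref{eq:inv_pointwise_pullback_formula} are applied to the correct power. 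Everything else — the change of variables, $f^*\vol_M = J_f\vol_M$, and the comparability \eqref{eq:norm_comparability} — is routine. (How Theorem~\ref{thm:positive_julia_set} follows from this lemma: when $M$ is not a rational homology sphere, some $\cehom{k}(M;\C)$ with $0 < k < n$ is nonzero, $f^*$ has an eigenvalue $\lambda\neq 0$ on it by finite-dimensionality over $\C$, Corollary~\ref{cor:eigenvalues_translate_to_forms} produces the eigenform $\omega_c$, Proposition~\ref{prop:higher_integrability} gives the required higher integrability, and rescaling achieves $\norm{\omega}_{f,n/k}=1$.)
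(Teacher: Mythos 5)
Your proposal is correct and follows essentially the same route as the paper: show $f^*\bigl(\abs{\omega}_f^{n/k}\vol_M\bigr) = (\deg f)\abs{\omega}_f^{n/k}\vol_M$, note the normalization $\int_M \abs{\omega}_f^{n/k}\vol_M = 1$ and the membership in $L^{1,\sharp}(\wedge^n M;\C)$, and conclude by the uniqueness of the limit in Theorem~\ref{thm:okuyama_pankka_complex_n-forms} together with Theorem~\ref{thm:okuyama_pankka_properties}. The only cosmetic difference is that the paper reads off $\abs{\lambda} = (\deg f)^{k/n}$ directly from \eqref{eq:inv_lp_pullback_formula}, whereas you (after a briefly muddled first attempt) obtain the equivalent identity $\abs{\lambda}^{n/k} = \deg f$ by comparing integrals, which is the same computation.
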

\begin{proof}
	Denote $\eta = \abs{\omega}_f^{n/k} \vol_M$. Note that
	\begin{equation}\label{eq:eigenvalue_abs_value}
		\abs{\lambda} = \norm{\lambda\omega}_{f, \frac{n}{k}} = \norm{f^*\omega}_{f, \frac{n}{k}} 
			= (\deg f)^\frac{k}{n}\norm{\omega}_{f, \frac{n}{k}} = (\deg f)^\frac{k}{n}.
	\end{equation}
	We now compute
	\begin{align*}
		f^*\eta 
		&= \left(\abs{\omega}_f^{\frac{n}{k}} \circ f \right) f^*\vol_M
		= \left(\abs{\omega}_f^{\frac{n}{k}} \circ f\right) J_f \vol_M
		= \abs{f^*\omega}_f^{\frac{n}{k}} \vol_M\\
		&= \abs{\lambda\omega}_f^{\frac{n}{k}} \vol_M
		= \abs{\lambda}^{\frac{n}{k}} \eta 
		= (\deg f)\eta.
	\end{align*}
	Hence, for all $m \geq 1$,
	\[
		\frac{(f^m)^* \eta}{(\deg f)^m} = \eta.
	\]
	Since $\omega \in L^{n/k, \sharp}(\wedge^k M; \C)$, we have $\eta \in L^{1, \sharp}(\wedge^n M; \C)$. Therefore, we may use Theorem \ref{thm:okuyama_pankka_complex_n-forms} on $\eta$, obtaining the desired result
	\[
		\mu_f = \lim_{m \to \infty} \frac{(f^m)^* \eta}{(\deg f)^m} = \eta.
	\]
\end{proof}

\begin{proof}[Proof of Theorem \ref{thm:positive_julia_set}]
	Assume that there exists $k \in \{1, \ldots, n-1\}$ for which $H^k(M; \R) \neq \{0\}$. Then by Lemma \ref{lemma:sobolev_de_rham_isomorphism}, $\cehom{k}(M) \neq \{0\}$. Since $\cehom{k}(M; \C) = \cehom{k}(M)\otimes \C$, we also have $\cehom{k}(M; \C) \neq \{0\}$. Hence, there exists a $k$-eigenclass $c$ of $f$.
	
	Let $\omega_c \in \fharm{k}(M; \C)$ be the complex $f$-harmonic $k$-form in the eigenclass $c$. By normalization, we may assume $\norm{\omega_c}_{n/k, f} = 1$. By Corollary \ref{cor:eigenvalues_translate_to_forms}, $\omega_c$ is a $k$-eigenform of $f$, and by Proposition \ref{prop:higher_integrability}, we have the higher integrability condition $\omega_c \in L^{n/k, \sharp}(\wedge^k M; \C)$. Thus, $\omega_c$ satisfies the assumptions of Lemma \ref{lemma:equilibrium_measure_integral_representation}, which proves the claim.
\end{proof}

We note that besides Theorem \ref{thm:positive_julia_set} the previous proof also yields Proposition \ref{prop:invariant_measure_representation}. Moreover, we obtain the following fact as an immediate corollary of Lemma \ref{lemma:equilibrium_measure_integral_representation}. 

\begin{cor}\label{cor:eigenvector_f-abs_coincidence}
	Let $0 < k \leq n$, and let $\omega_1$ and $\omega_2$ be $k$-eigenforms of $f$ satisfying $\omega_1, \omega_2 \in L^{n/k, \sharp}(\wedge^k M; \C)$ and $\norm{\omega_1}_{n/k, f} = \norm{\omega_2}_{n/k, f} = 1$. Then
	\[
		\abs{(\omega_1)_x}_{f} = \abs{(\omega_2)_x}_{f}
	\]
	for almost every $x \in M$.
\end{cor}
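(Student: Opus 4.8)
The plan is simply to apply Lemma~\ref{lemma:equilibrium_measure_integral_representation} to each of $\omega_1$ and $\omega_2$ separately. First I would check that $\omega_1$ meets every hypothesis of that lemma: by assumption it is a $k$-eigenform of $f$ with $0 < k \leq n$, it lies in $L^{n/k,\sharp}(\wedge^k M;\C)$, and it is normalized so that $\norm{\omega_1}_{n/k, f} = 1$. Hence the lemma produces the representation $\mu_f = \abs{\omega_1}_f^{n/k}\,\vol_M$ of the Okuyama--Pankka invariant measure as a measurable $n$-form. Running the identical argument with $\omega_2$ in place of $\omega_1$ gives $\mu_f = \abs{\omega_2}_f^{n/k}\,\vol_M$.

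Next I would combine the two identities. Since both right-hand sides equal the same measurable $n$-form $\mu_f$, we obtain $\abs{\omega_1}_f^{n/k}\,\vol_M = \abs{\omega_2}_f^{n/k}\,\vol_M$; because $\vol_M$ is a nowhere-vanishing smooth $n$-form on $M$, comparing the two densities against this fixed reference forces $\abs{(\omega_1)_x}_f^{n/k} = \abs{(\omega_2)_x}_f^{n/k}$ for $m_n$-almost every $x \in M$. Applying $t \mapsto t^{k/n}$, which is a homeomorphism of $[0,\infty)$, then yields $\abs{(\omega_1)_x}_f = \abs{(\omega_2)_x}_f$ almost everywhere, which is the claim.

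There is essentially no obstacle here: all of the substantive work is carried out in Lemma~\ref{lemma:equilibrium_measure_integral_representation} through the uniqueness part of Theorem~\ref{thm:okuyama_pankka_complex_n-forms}, and the corollary is a one-line consequence of the fact that the invariant measure $\mu_f$ does not depend on which normalized $k$-eigenform is used to represent it. The only point worth stating with minimal care is the standard measure-theoretic observation that two $L^1$-densities inducing the same measure against the fixed volume form must agree pointwise almost everywhere.
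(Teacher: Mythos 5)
Your proposal is correct and is essentially the paper's own argument: apply Lemma \ref{lemma:equilibrium_measure_integral_representation} to $\omega_1$ and $\omega_2$ separately, so that both $\abs{\omega_1}_f^{n/k}\vol_M$ and $\abs{\omega_2}_f^{n/k}\vol_M$ represent the same measure $\mu_f$, and then compare the densities against the nowhere-vanishing form $\vol_M$ to conclude $\abs{(\omega_1)_x}_f = \abs{(\omega_2)_x}_f$ almost everywhere. No gaps.
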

\begin{proof}
	The measurable $n$-forms $\eta_1 = \abs{\omega_1}_f^{n/k} \vol_M$ and $\eta_2 = \abs{\omega_2}_f^{n/k} \vol_M$ both represent the same measure $\mu_f$. Hence, $(\eta_1)_x = (\eta_2)_x$ for almost every $x \in M$, which yields the claim.
\end{proof}

\subsection{Proof of Theorem \ref{thm:dynamical_bonk_heinonen}}
Again, we recall the statement of Theorem \ref{thm:dynamical_bonk_heinonen} under our assumptions on $f$ and $M$.

\begin{customthm}{\ref{thm:dynamical_bonk_heinonen}}
	For all $k \in \{0, \ldots, n\}$,
	\[
		\dim H^k(M; \R) \leq \binom{n}{k}.
	\]
\end{customthm}

The key ingredient in the proof of Theorem \ref{thm:dynamical_bonk_heinonen} is the idea of Corollary \ref{cor:eigenvector_f-abs_coincidence} applied to inner products of eigenvectors. The proof divides into two cases, depending on whether two eigenvectors have the same eigenvalue or not. First, we show that higher integrable eigenforms of $f$ with different corresponding eigenvalues are $f$-orthogonal almost everywhere.

\begin{lemma}\label{lemma:eigenvector_orthogonality_different_eigenvalue}
	Let $0 < k < n$, and let $\omega_1$ and $\omega_2$ be $k$-eigenforms of $f$ with corresponding eigenvalues $\lambda_1$ and $\lambda_2$. Assume that $\omega_1, \omega_2 \in L^{n/k, \sharp}(\wedge^k M; \C)$,  $\norm{\omega_1}_{f, n/k} = \norm{\omega_2}_{f, n/k} = 1$, and $\lambda_1 \neq \lambda_2$. Then $\omega_1$ and $\omega_2$ are $f$-orthogonal almost everywhere.
\end{lemma}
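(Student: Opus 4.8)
The plan is to build from $\omega_1$ and $\omega_2$ a single measurable complex $n$-form $\eta$ satisfying $f^*\eta=\mu\eta$ for a scalar $\mu$ with $\abs{\mu}=\deg f$ but $\mu\neq\deg f$; the second condition forces $\int_M\eta=0$, and Corollary \ref{cor:okuyama_pankka_technical_variant} then collapses $\eta$ to $0$, which is essentially the assertion. First I would record the normalization: by \eqref{eq:inv_lp_pullback_formula} and $f^*\omega_i=\lambda_i\omega_i$, one has $\abs{\lambda_i}=\norm{f^*\omega_i}_{f,n/k}=(\deg f)^{k/n}$ for $i=1,2$. Writing $h:=\abs{\omega_1}_f=\abs{\omega_2}_f$ (the two coincide a.e.\ by Corollary \ref{cor:eigenvector_f-abs_coincidence}) and $g:=\ip{\omega_1}{\omega_2}_f$, I would set
\[
	\eta:=\ip{\omega_1}{\omega_2}_f\,\abs{\omega_1}_f^{\frac{n}{k}-2}\,\vol_M=g\,h^{\frac{n}{k}-2}\,\vol_M,
\]
with the convention $\eta=0$ on the measurable set $\{h=0\}=\{\omega_1=0\}$. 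By the Cauchy--Schwarz inequality for $\ip{\cdot}{\cdot}_f$ one has $\abs{g}\leq h^2$, whence $\abs{\eta}\leq h^{n/k}=\abs{\omega_1}_f^{n/k}$ a.e.; since $\omega_1\in L^{n/k,\sharp}(\wedge^k M;\C)$ and $\abs{\cdot}_f$ is comparable to $\abs{\cdot}$ by \eqref{eq:norm_comparability}, this yields $\eta\in L^{1,\sharp}(\wedge^n M;\C)$.

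Next I would compute $f^*\eta$. From \eqref{eq:inv_product_pullback_formula} applied to $\omega_1,\omega_2$ together with $f^*\omega_i=\lambda_i\omega_i$ one obtains $g\circ f=\lambda_1\overline{\lambda_2}\,g\,J_f^{-2k/n}$, and from \eqref{eq:inv_pointwise_pullback_formula} and $\abs{\lambda_1}=(\deg f)^{k/n}$ one obtains $h\circ f=(\deg f)^{k/n}\,h\,J_f^{-k/n}$, hence $h^{n/k-2}\circ f=(\deg f)^{\frac{n-2k}{n}}\,h^{n/k-2}\,J_f^{-\frac{n-2k}{n}}$. Since $f^*(\varphi\,\vol_M)=(\varphi\circ f)\,J_f\,\vol_M$ for a scalar function $\varphi$, the powers of $J_f$ sum to $-\tfrac{2k}{n}-\tfrac{n-2k}{n}+1=0$ --- this is the point of the exponent $\tfrac{n}{k}-2$ --- so that
\[
	f^*\eta=\mu\,\eta,\qquad \mu:=\lambda_1\overline{\lambda_2}\,(\deg f)^{\frac{n-2k}{n}}.
\]
Here $\abs{\mu}=\abs{\lambda_1}\,\abs{\lambda_2}\,(\deg f)^{\frac{n-2k}{n}}=\deg f$. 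Moreover $\mu\neq\deg f$: equality would give $\lambda_1\overline{\lambda_2}=(\deg f)^{2k/n}$, a positive real number of modulus $\abs{\lambda_1}\,\abs{\lambda_2}$, so $\lambda_1$ and $\lambda_2$ would have equal modulus and equal argument, i.e.\ $\lambda_1=\lambda_2$, contrary to hypothesis.

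Finally, comparing $\int_M f^*\eta=\mu\int_M\eta$ with $\int_M f^*\eta=(\deg f)\int_M\eta$ (the latter from Lemma \ref{lemma:qr_pushforward_properties}(2)--(3)) gives $(\deg f-\mu)\int_M\eta=0$, hence $\int_M\eta=0$. Iterating $f^*\eta=\mu\eta$ gives $(f^m)^*\eta=\mu^m\eta$ for all $m$, so Corollary \ref{cor:okuyama_pankka_technical_variant}, applied with this $\eta$ and $\lambda=\mu$ (legitimate since $\abs{\mu}=\deg f$ and $\int_M\eta=0$), yields $\eta=\lim_{m}(f^m)^*\eta/\mu^m=0$ in the weak sense, hence $\eta=0$ almost everywhere. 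Therefore $g\,h^{n/k-2}=0$ a.e.; on $\{h>0\}$ this forces $g=0$ a.e., and on $\{h=0\}$ we have $\omega_1=0$, hence $g=0$ a.e.\ there as well. Thus $\ip{(\omega_1)_x}{(\omega_2)_x}_f=0$ for a.e.\ $x\in M$, i.e.\ $\omega_1$ and $\omega_2$ are complex $f$-orthogonal almost everywhere. The routine but slightly delicate steps are the measure-theoretic bookkeeping on $\{h=0\}$ (one uses $J_f>0$ a.e.\ for quasiregular maps, so the piecewise definition of $\eta$ is compatible with $f^*$ off a null set) and verifying the cancellation of the $J_f$-exponents; the conceptual crux, and the only place the hypothesis $\lambda_1\neq\lambda_2$ is used, is the elementary observation that $\abs{\mu}=\deg f$ yet $\mu\neq\deg f$.
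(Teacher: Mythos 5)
Your proof is correct and follows essentially the same route as the paper: you build an auxiliary $n$-form that is an eigenform of $f^*$ with eigenvalue $\lambda_1\overline{\lambda_2}(\deg f)^{(n-2k)/n}$ of modulus $\deg f$ but different from $\deg f$, deduce $\int_M\eta=0$ via the push-forward identities, and kill $\eta$ with Corollary \ref{cor:okuyama_pankka_technical_variant}. The only difference is cosmetic: the paper uses the density $\smallabs{\ip{\omega_1}{\omega_2}_f}^{\frac{n}{2k}-1}\ip{\omega_1}{\omega_2}_f$ instead of your $\ip{\omega_1}{\omega_2}_f\abs{\omega_1}_f^{\frac{n}{k}-2}$, which avoids the bookkeeping on $\{\abs{\omega_1}_f=0\}$ and the appeal to Corollary \ref{cor:eigenvector_f-abs_coincidence}, but both choices balance the Jacobian powers in the same way.
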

\begin{proof}
	Let 
	\[
		\eta_{12} = \abs{\ip{\omega_1}{\omega_2}_f}^{\frac{n}{2k}-1} \ip{\omega_1}{\omega_2}_f \vol_M.
	\]
	Then 
	\begin{align*}
		f^* \eta_{12}
		&= \left(\left( \abs{\ip{\omega_1}{\omega_2}_f}^{\frac{n}{2k}-1} \ip{\omega_1}{\omega_2}_f \right) \circ f \right)
				J_f \vol_M\\
		&= \abs{\ip{f^*\omega_1}{f^*\omega_2}_f}^{\frac{n}{2k}-1} \ip{f^*\omega_1}{f^*\omega_2}_f \vol_M\\
		&= \lambda_1 \overline{\lambda_2} (\deg f)^\frac{n-2k}{n} \eta_{12}.
	\end{align*}
	We denote $\lambda_{12} = \lambda_1 \overline{\lambda_2} (\deg f)^{(n-2k)/n}$. By \eqref{eq:eigenvalue_abs_value}, $\abs{\lambda_1} = \abs{\lambda_2} = (\deg f)^{k/n}$. Hence, the assumption $\lambda_1 \neq \lambda_2$ implies $\lambda_{12} \neq \deg f$. Now, however,
	\[
		(\deg f)\int_M \eta_{12} = \int_M f^*\eta_{12} = \lambda_{12} \int_M \eta_{12},
	\]
	and consequently $\int_M \eta_{12} = 0$. Furthermore, since $\omega_1, \omega_2 \in L^{n/k, \sharp}(\wedge^k M; \C)$, we have $\eta_{12} \in L^{1, \sharp}(\wedge^n M; \C)$ by Hölder's inequality. Therefore, by Corollary \ref{cor:okuyama_pankka_technical_variant},
	\[
		\eta_{12} = \frac{(f^m)^*\eta_{12}}{(\lambda_{12})^{m}} \xrightarrow[m \to \infty]{} 0
	\]
	in the weak sense. Hence, $\eta_{12} = 0$, and consequently $\omega_1$ and $\omega_2$ are $f$-orthogonal almost everywhere.
\end{proof}

Next, we study higher integrable eigenforms of $f$ with the same corresponding eigenvalue.

\begin{lemma}\label{lemma:eigenvector_orthogonality_same_eigenvalue}
	Let $0 < k < n$, and let $\omega_1$ and $\omega_2$ be $k$-eigenforms of $f$ with the same corresponding eigenvalue $\lambda$. Assume that $\omega_1, \omega_2 \in L^{n/k, \sharp}(\wedge^k M; \C)$ and $\norm{\omega_1}_{f, n/k} = \norm{\omega_2}_{f, n/k} = 1$. Then there exists a constant $C_{12} \in \C$ satisfying
	\[
		\ip{(\omega_1)_x}{(\omega_2)_x}_f = C_{12} \abs{(\omega_1)_x}_f\abs{(\omega_2)_x}_f
	\]
	for almost every $x \in M$.
\end{lemma}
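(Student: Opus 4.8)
The plan is to adapt the proof of Lemma~\ref{lemma:eigenvector_orthogonality_different_eigenvalue}, using the auxiliary measurable $n$-form
\[
	\eta_{12} = \abs{\ip{\omega_1}{\omega_2}_f}^{\frac{n}{2k}-1}\ip{\omega_1}{\omega_2}_f\vol_M,
\]
and then extracting the desired pointwise identity from the representation $\mu_f = \abs{\omega_1}_f^{n/k}\vol_M$. First I would carry out the same $f^*$-computation as in Lemma~\ref{lemma:eigenvector_orthogonality_different_eigenvalue}, using $f^*\vol_M = J_f\vol_M$, the pull-back formula \eqref{eq:inv_product_pullback_formula}, and $f^*\omega_i = \lambda\omega_i$, to obtain $f^*\eta_{12} = \lambda_{12}\,\eta_{12}$ with $\lambda_{12} = \lambda\overline{\lambda}\,(\deg f)^{(n-2k)/n} = \abs{\lambda}^2(\deg f)^{(n-2k)/n}$. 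The key difference from Lemma~\ref{lemma:eigenvector_orthogonality_different_eigenvalue} is that here, by \eqref{eq:eigenvalue_abs_value}, $\abs{\lambda}^2 = (\deg f)^{2k/n}$, so that $\lambda_{12} = \deg f$ exactly, and hence $(f^m)^*\eta_{12}/(\deg f)^m = \eta_{12}$ for all $m \geq 1$.

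Next I would observe that $\eta_{12} \in L^{1,\sharp}(\wedge^n M; \C)$: by the pointwise Cauchy--Schwarz inequality for the Hermitian inner product $\ip{\cdot}{\cdot}_f$ together with Hölder's inequality, the density of $\eta_{12}$ is dominated by $\abs{\omega_1}_f^{n/(2k)}\abs{\omega_2}_f^{n/(2k)}$, which is a product of two functions in $L^{2,\sharp}$ since $\omega_1,\omega_2 \in L^{n/k,\sharp}(\wedge^k M; \C)$. Then Theorem~\ref{thm:okuyama_pankka_complex_n-forms} applied to $\eta_{12}$ gives $\mu_{\eta_{12}} = \lim_{m}(f^m)^*\eta_{12}/(\deg f)^m = \eta_{12}$, while by definition $\mu_{\eta_{12}} = \left(\int_M\eta_{12}\right)\mu_f$. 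Combining these with the representation $\mu_f = \abs{\omega_1}_f^{n/k}\vol_M$ from Lemma~\ref{lemma:equilibrium_measure_integral_representation} and the equality $\abs{\omega_1}_f = \abs{\omega_2}_f$ a.e.\ from Corollary~\ref{cor:eigenvector_f-abs_coincidence}, I obtain, with $c = \int_M\eta_{12} \in \C$, the identity $\abs{\ip{\omega_1}{\omega_2}_f}^{\frac{n}{2k}-1}\ip{\omega_1}{\omega_2}_f = c\,\abs{\omega_1}_f^{n/(2k)}\abs{\omega_2}_f^{n/(2k)}$ almost everywhere on $M$.

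Finally I would invert this algebraically. The map $\Phi\colon \C \to \C$, $\Phi(z) = \abs{z}^{\frac{n}{2k}-1}z$ (with $\Phi(0) = 0$), is a homeomorphism with inverse $\Phi^{-1}(w) = \abs{w}^{\frac{2k}{n}-1}w$; since $\Phi(t\,\abs{\omega_1}_f\abs{\omega_2}_f) = \abs{t}^{\frac{n}{2k}-1}t\,(\abs{\omega_1}_f\abs{\omega_2}_f)^{n/(2k)}$ for $t \in \C$, taking $t = \Phi^{-1}(c)$ and using injectivity of $\Phi$ gives $\ip{(\omega_1)_x}{(\omega_2)_x}_f = C_{12}\,\abs{(\omega_1)_x}_f\abs{(\omega_2)_x}_f$ a.e.\ with $C_{12} = \Phi^{-1}(c) = \abs{c}^{\frac{2k}{n}-1}c$ (interpreted as $0$ when $c = 0$); equivalently, one can first take absolute values to get $\abs{\ip{\omega_1}{\omega_2}_f} = \abs{c}^{2k/n}\abs{\omega_1}_f\abs{\omega_2}_f$ and substitute back. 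On the set where $\abs{\omega_1}_f\abs{\omega_2}_f = 0$ the identity is trivial, since there $\Phi(\ip{\omega_1}{\omega_2}_f) = 0$ forces $\ip{\omega_1}{\omega_2}_f = 0$.

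I do not expect a serious obstacle: the $f^*$-computation on $\eta_{12}$ is the core of the argument, but it is essentially identical to the one already done in Lemma~\ref{lemma:eigenvector_orthogonality_different_eigenvalue}. The only points needing care are verifying $\eta_{12} \in L^{1,\sharp}$ so that Theorem~\ref{thm:okuyama_pankka_complex_n-forms} applies, and the bookkeeping of exponents in the final inversion, in particular the behaviour at points where $\ip{\omega_1}{\omega_2}_f$ vanishes.
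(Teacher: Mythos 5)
Your proposal is correct and follows essentially the same route as the paper: the same auxiliary form $\eta_{12} = \abs{\ip{\omega_1}{\omega_2}_f}^{\frac{n}{2k}-1}\ip{\omega_1}{\omega_2}_f\vol_M$, the same computation giving $f^*\eta_{12} = (\deg f)\,\eta_{12}$, and the same application of Theorem \ref{thm:okuyama_pankka_complex_n-forms} to conclude $\eta_{12} = \bigl(\int_M \eta_{12}\bigr)\mu_f$. The only (harmless) difference is at the end: the paper introduces a second invariant form $\sigma_{12} = \abs{\ip{\omega_1}{\omega_2}_f}^{\frac{n}{2k}}\vol_M$ to pin down the modulus before dividing, whereas you extract the constant directly from the representation $\mu_f = \abs{\omega_1}_f^{n/k}\vol_M$ of Lemma \ref{lemma:equilibrium_measure_integral_representation} together with Corollary \ref{cor:eigenvector_f-abs_coincidence} and invert the map $z \mapsto \abs{z}^{\frac{n}{2k}-1}z$ pointwise, which is equally valid.
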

\begin{proof}
	Let 
	\[
		\eta_{12} = \abs{\ip{\omega_1}{\omega_2}_f}^{\frac{n}{2k}-1} \ip{\omega_1}{\omega_2}_f \vol_M.
	\]
	As previously, $\eta_{12} \in L^{1, \sharp}(\wedge^n M; \C)$ and $f^* \eta_{12} = \lambda_{12}\eta_{12}$, where this time $\lambda_{12} = \lambda \overline{\lambda} (\deg f)^{(n-2k)/n} = \deg f$. By Theorem \ref{thm:okuyama_pankka_complex_n-forms},
	\[
		\eta_{12} = A\mu_f
	\]
	for some $A \in \C$. 
	
	Furthermore, let 
	\[
		\sigma_{12} = \abs{\ip{\omega_1}{\omega_2}_f}^{\frac{n}{2k}} \vol_M
	\]
	Then we have $\sigma_{12} \in L^{1, \sharp}(\wedge^n M; \C)$ and $f^* \sigma_{12} = (\deg f) \sigma_{12}$. Thus, by Theorem \ref{thm:okuyama_pankka_complex_n-forms},
	\[
		\sigma_{12} = B\mu_f = B\abs{\omega_1}^\frac{n}{k}_f \vol_M = B\abs{\omega_2}^\frac{n}{k}_f \vol_M
	\]
	for some $B \in [0, \infty)$. 
	
	If $B = 0$, then $\sigma_{12} = 0$, in which case $\ip{(\omega_1)_x}{(\omega_2)_x}_f = 0$ at almost every $x \in M$. In this case, we may choose $C_{12} = 0$. Suppose now that $B \neq 0$. Then
	\[
		\ip{(\omega_1)_x}{(\omega_2)_x}_f 
		= AB^{-1}\abs{\ip{(\omega_1)_x}{(\omega_2)_x}_f} 
		= A B^{\frac{2k}{n} - 1} \abs{(\omega_1)_x}_f \abs{(\omega_2)_x}_f,
	\]
	for almost every $x \in M$, which proves the claim for $C_{12} = AB^{2k/n - 1}$.
\end{proof}

Note that Proposition \ref{prop:norm_mimimizer_angles} is a direct consequence of combining Lemmas \ref{lemma:eigenvector_orthogonality_different_eigenvalue} and \ref{lemma:eigenvector_orthogonality_same_eigenvalue} with Proposition \ref{prop:higher_integrability} and Corollary \ref{cor:eigenvalues_translate_to_forms}. Furthermore, by a standard Gram--Schmidt argument, Lemma \ref{lemma:eigenvector_orthogonality_same_eigenvalue} yields the following corollary. 
\begin{cor}\label{cor:orthogonal_set_same_eigenvalue}
	Let $k \in \{1, \ldots, n-1\}$, $\lambda \in \C \setminus \{0\}$, and let $\omega_1, \ldots, \omega_l$ be linearly independent $k$-eigenforms of $f$ with corresponding eigenvalue $\lambda$. Assume that $\norm{\omega_i}_{f, n/k} = 1$ and $\omega_i \in L^{n/k, \sharp}(\wedge^k M; \C)$ for all $i \in \{1, \ldots, l\}$. 
	Then there exist $k$-eigenforms $\tau_1, \ldots, \tau_l$ of $f$ with corresponding eigenvalue $\lambda$, which are pairwise $f$-orthogonal almost everywhere and satisfy $\norm{\tau_i}_{f, n/k} = 1$ and $\tau_i \in L^{n/k, \sharp}(\wedge^k M; \C)$ for all $i \in \{1, \ldots, l\}$.
\end{cor}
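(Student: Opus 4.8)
The plan is to reduce the statement to a single, $x$-independent linear-algebra step: exhibit a fixed complex matrix that diagonalizes the common Gram matrix of the $\omega_i$, and then let the $\tau_i$ be the corresponding fixed linear combinations of the $\omega_i$. This works because linear combinations of $k$-eigenforms with the same eigenvalue $\lambda$ are again $k$-eigenforms with eigenvalue $\lambda$ (as $f^*$ is linear), so all the relevant structure is already encoded pointwise.

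First I would record the pointwise geometry of the family $\omega_1, \ldots, \omega_l$. By Corollary \ref{cor:eigenvector_f-abs_coincidence} there is a measurable $r \colon M \to [0, \infty)$ with $\abs{(\omega_i)_x}_f = r(x)$ for a.e.\ $x$ and every $i$; since $\norm{\omega_i}_{f, n/k} = 1$, we get $\int_M r^{n/k} \vol_M = 1$, so $r > 0$ on a set of positive measure. Applying Lemma \ref{lemma:eigenvector_orthogonality_same_eigenvalue} to each pair $i \neq j$ yields constants $C_{ij} \in \C$ with $\ip{(\omega_i)_x}{(\omega_j)_x}_f = C_{ij}\, r(x)^2$ for a.e.\ $x$; setting $C_{ii} = 1$ and intersecting the finitely many full-measure sets, these identities hold simultaneously a.e. Since $\ip{\cdot}{\cdot}_f$ is conjugate-symmetric, the matrix $G = (C_{ij})_{ij} \in \C^{l \times l}$ is Hermitian and satisfies $\ip{(\omega_i)_x}{(\omega_j)_x}_f = r(x)^2 G_{ij}$ a.e.

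Next I would check that $G$ is positive definite. For $a = (a_1, \ldots, a_l) \in \C^l$ and a.e.\ $x$ one has $r(x)^2 \sum_{i,j} a_i \overline{a_j} G_{ij} = \abs{\sum_i a_i (\omega_i)_x}_f^2 \geq 0$; choosing $x$ with $r(x) > 0$ gives $G \geq 0$. If $Ga = 0$, then $\sum_i a_i (\omega_i)_x = 0$ a.e.\ where $r(x) > 0$, while $(\omega_i)_x = 0$ a.e.\ where $r(x) = 0$; hence $\sum_i a_i \omega_i = 0$ in $L^{n/k}(\wedge^k M; \C)$, and linear independence forces $a = 0$. So $G > 0$, and there is an invertible $T = (T_{ij}) \in \C^{l \times l}$ with $T G T^* = I_l$ (e.g.\ $T = L^{-1}$ for a Cholesky factorization $G = LL^*$, which is essentially what Gram--Schmidt produces). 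Setting $\tau_i = \sum_{j=1}^l T_{ij}\omega_j$, each $\tau_i$ is a nonzero $k$-eigenform of $f$ with eigenvalue $\lambda$, and pointwise $\ip{(\tau_i)_x}{(\tau_j)_x}_f = \sum_{p,q} T_{ip}\overline{T_{jq}}\, r(x)^2 G_{pq} = (TGT^*)_{ij}\, r(x)^2 = \delta_{ij}\, r(x)^2$ a.e. Thus the $\tau_i$ are pairwise $f$-orthogonal a.e., $\abs{(\tau_i)_x}_f = r(x)$ a.e.\ gives $\norm{\tau_i}_{f, n/k}^{n/k} = \int_M r^{n/k}\vol_M = 1$, and $\tau_i \in L^{n/k, \sharp}(\wedge^k M; \C)$ since a finite linear combination of elements of $\bigcup_{s > n/k} L^s$ lies in $L^s$ with $s$ the minimum of the finitely many exponents, and $s > n/k$.

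The only step that needs any care is the passage from the a.e.\ statements of Lemma \ref{lemma:eigenvector_orthogonality_same_eigenvalue} and Corollary \ref{cor:eigenvector_f-abs_coincidence} to a genuinely $x$-independent Gram matrix, i.e.\ being allowed to factor out the common scalar $r(x)^2$ before doing linear algebra, together with the remark that linear independence as $L^{n/k}$-forms exactly matches invertibility of $G$. Everything after that is a fixed change of basis and a routine verification.
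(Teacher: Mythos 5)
Your proof is correct and follows essentially the same route as the paper, which derives the corollary from Lemma \ref{lemma:eigenvector_orthogonality_same_eigenvalue} ``by a standard Gram--Schmidt argument''; your constant Gram matrix $G$ and the factorization $TGT^* = I_l$ are exactly a careful write-up of that argument, including the points the paper leaves implicit (positive definiteness of $G$ via linear independence, preservation of the eigenform property, the normalization, and the higher integrability of the combinations).
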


We are now ready to prove Theorem \ref{thm:dynamical_bonk_heinonen}.

\begin{proof}[Proof of Theorem \ref{thm:dynamical_bonk_heinonen}]
	We may assume that $H^k(M; \R) \neq 0$. Since $H^0(M; \R)$ and $H^n(M; \R)$ are one dimensional, we may also assume that $0 < k < n$.  By Lemma \ref{lemma:sobolev_de_rham_isomorphism} and the fact that $\cehom{k}(M; \C) = \cehom{k}(M) \otimes \C$, it is enough to show that
	\[	
		\dim_\C \cehom{k}(M; \C) \leq \binom{n}{k}.
	\] 
	We denote $D = \dim_\C \cehom{k}(M; \C)$.
	
	Let $\lambda_1, \ldots, \lambda_l$ be the eigenvalues of $f^* \colon \cehom{k}(M; \C) \to \cehom{k}(M; \C)$, and denote $E_i = E^k(f; \lambda_i)$ for $i \in \{1, \ldots, l\}$. For every eigenspace $E_i$, we fix a free basis $c_{i, 1}, \ldots, c_{i, l_i}$, and let  $\omega_{i, 1}, \ldots, \omega_{i, l_i}$ be the corresponding $f$-harmonic $k$-forms. 
	
	By Corollary \ref{cor:eigenvalues_translate_to_forms}, the forms $\omega_{i, j}$ are $k$-eigenforms of $f$ with corresponding eigenvalues $\lambda_i$, and by normalizing, we may assume that $\smallnorm{\omega_{i, j}}_{n/k, f} = 1$ for all $i \in \{1, \ldots, l\}$ and $j \in \{1, \ldots, l_i\}$. Furthermore, by Proposition \ref{prop:higher_integrability}, $\omega_{i, j} \in L^{n/k, \sharp}(\wedge^k M; \C)$ for all $i \in \{1, \ldots, l\}$ and $j \in \{1, \ldots, l_i\}$.
	
	We may now apply Corollary \ref{cor:orthogonal_set_same_eigenvalue}, obtaining for every $i \in \{1, \ldots, l\}$ $k$-eigenforms $\tau_{i, j}$ of $f$ which are pairwise $f$-orthogonal almost everywhere and satisfy $\smallnorm{\tau_{i, j}}_{n/k, f} = 1$ and $\tau_i \in L^{n/k, \sharp}(\wedge^k M; \C)$ for every $j \in \{1, \ldots, l_i\}$. By Lemma \ref{lemma:eigenvector_orthogonality_different_eigenvalue}, we also have that $\tau_{i_1, j_1}$ and $\tau_{i_2, j_2}$ are $f$-orthogonal almost everywhere for all $i_1, i_2 \in \{1, \ldots, l\}$ satisfying $i_1 \neq i_2$ and every $j_s \in \{1, \ldots l_s \}$ for $s = 1, 2$.
	
	We fix Borel representatives $\tau_{i, j}^{\text{Bor}}$ of the measurable $k$-forms $\tau_{i, j}$, and denote
	\[
		B = \left\{ \tau_{i, j}^{\text{Bor}} \colon i \in \{1, \ldots, l\}, j \in \{1, \ldots, l_i\} \right\}.
	\]
	Now, if $\tau, \tau' \in B$ satisfy $\tau \neq \tau'$, then $\tau$ and $\tau'$ are $f$-orthogonal almost everywhere. Furthermore, $\abs{B} = D$, and by Lemma \ref{lemma:equilibrium_measure_integral_representation}, $\spt \tau = \Julia_f$ for every $\tau \in B$.
	
	Since $H^k(M; \R) \neq 0$, $m_n(\Julia_f) > 0$ by Theorem \ref{thm:positive_julia_set}. Therefore, we may fix $x \in \Julia_f$ for which the inner product $\ip{\cdot}{\cdot}_f$ is defined at $x$, $\abs{\tau_x}_f > 0$ for all $\tau \in B$, and $\ip{\tau_x}{\tau'_x}_f = 0$ for all $\tau, \tau' \in B$ satisfying $\tau \neq \tau'$. Hence, the set $B_x = \{\tau_x \colon \tau \in B\}$ is a linearly independent subset of $(\wedge^k T^*_x M) \otimes \C$, and $\abs{B_x} = D$. We conclude that 
	\[
		D \leq \dim_\C \left((\wedge^k T^*_x M) \otimes \C\right) = \binom{n}{k},
	\]
	which proves Theorem \ref{thm:dynamical_bonk_heinonen}.
\end{proof}




\begin{thebibliography}{10}
	
	\bibitem{Astola-Kangaslampi-Peltonen}
	L.~Astola, R.~Kangaslampi, and K.~Peltonen.
	\newblock Latt{\`e}s-type mappings on compact manifolds.
	\newblock {\em Conform. Geom. Dyn.}, 14:337--367, 2010.
	
	\bibitem{Bonk-Heinonen_Acta}
	M.~Bonk and J.~Heinonen.
	\newblock Quasiregular mappings and cohomology.
	\newblock {\em Acta Math.}, 186(2):219--238, 2001.
	
	\bibitem{Browder1963paper}
	F.~E. Browder.
	\newblock Nonlinear elliptic boundary value problems.
	\newblock {\em Bull. Amer. Math. Soc.}, 69(6):862--874, 1963.
	
	\bibitem{DrasinPankka2015paper}
	D.~Drasin and P.~Pankka.
	\newblock Sharpness of {R}ickman’s {P}icard theorem in all dimensions.
	\newblock {\em Acta Math.}, 214(2):209--306, 2015.
	
	\bibitem{GoldsteinTroyanov2006paper}
	V.~Gol'dshtein and M.~Troyanov.
	\newblock Sobolev inequalities for differential forms and
	{$L_{q,p}$}-cohomology.
	\newblock {\em J. Geom. Anal.}, 16(4):597--631, 2006.
	
	\bibitem{GoldsteinTroyanov2010paper}
	V.~Gol'dshtein and M.~Troyanov.
	\newblock A conformal de {R}ham complex.
	\newblock {\em J. Geom. Anal.}, 20(3):651--669, 2010.
	
	\bibitem{Gromov1981proc}
	M.~Gromov.
	\newblock Hyperbolic manifolds, groups and actions.
	\newblock In I.~Kra and B.~Maskit, editors, {\em Riemann surfaces and related
		topics: Proceedings of the 1978 Stony Brook Conference}, pages 183--213.
	Princeton University Press, 1981.
	
	\bibitem{Gromov2007book}
	M.~Gromov.
	\newblock {\em Metric structures for Riemannian and non-Riemannian spaces}.
	\newblock Springer Science \& Business Media, 2007.
	
	\bibitem{HajlaszIwaniecMalyOnninen2008paper}
	P.~Haj{\l}asz, T.~Iwaniec, J.~Mal\'y, and J.~Onninen.
	\newblock Weakly differentiable mappings between manifolds.
	\newblock {\em Mem. Amer. Math. Soc.}, 192(899):viii+72, 2008.
	
	\bibitem{Hanner1956paper}
	O.~Hanner.
	\newblock On the uniform convexity of $l^p$ and $l^p$.
	\newblock {\em Arkiv f{\"o}r Matematik}, 3(3):239--244, 1956.
	
	\bibitem{HinkkanenMartinMayer2004paper}
	A.~Hinkkanen, G.~J. Martin, and V.~Mayer.
	\newblock Local dynamics of uniformly quasiregular mappings.
	\newblock {\em Math. Scand.}, 95:80--100, 2004.
	
	\bibitem{HolopainenPigolaVeronelli2011paper}
	I.~Holopainen, S.~Pigola, and G.~Veronelli.
	\newblock Global comparison principles for the p-{L}aplace operator on
	{R}iemannian manifolds.
	\newblock {\em Potential Analysis}, 34(4):371--384, 2011.
	
	\bibitem{IwaniecLutoborski1993paper}
	T.~Iwaniec and A.~Lutoborski.
	\newblock Integral estimates for null {L}agrangians.
	\newblock {\em Arch. Rational Mech. Anal.}, 125(1):25--79, 1993.
	
	\bibitem{IwaniecMartin1996paper}
	T.~Iwaniec and G.~Martin.
	\newblock Quasiregular semigroups.
	\newblock {\em Ann. Acad. Sci. Fenn. Math.}, 21(2):241--254, 1996.
	
	\bibitem{IwaniecMartin2001book}
	T.~Iwaniec and G.~Martin.
	\newblock {\em Geometric function theory and non-linear analysis}.
	\newblock Oxford Mathematical Monographs. The Clarendon Press, Oxford
	University Press, New York, 2001.
	
	\bibitem{IwaniecScottStroffolini1999paper}
	T.~Iwaniec, C.~Scott, and B.~Stroffolini.
	\newblock Nonlinear {H}odge theory on manifolds with boundary.
	\newblock {\em Ann. Mat. Pura Appl. (4)}, 177:37--115, 1999.
	
	\bibitem{Kangaslampi-thesis}
	R.~Kangaslampi.
	\newblock Uniformly quasiregular mappings on elliptic {R}iemannian manifolds.
	\newblock {\em Ann. Acad. Sci. Fenn. Math. Diss.}, 151, 2008.
	\newblock 72pp.
	
	\bibitem{KangasniemiPankka2017paper}
	I.~Kangasniemi and P.~Pankka.
	\newblock Uniform cohomological expansion of uniformly quasiregular mappings.
	\newblock 2017.
	\newblock preprint, \url{https://arxiv.org/abs/1708.01451}.
	
	\bibitem{Lindqvist2017notes}
	P.~Lindqvist.
	\newblock Notes on the $p$-{L}aplace equation (second edition), 2017.
	\newblock University of Jyv\"askyl\"a, lecture notes.
	
	\bibitem{LuistoPankka2016Paper}
	R.~Luisto and P.~Pankka.
	\newblock Rigidity of extremal quasiregularly elliptic manifolds.
	\newblock {\em Groups Geom. Dyn.}, 10(2):723--732, 2016.
	
	\bibitem{MartinMayer2003paper}
	G.~J. Martin and V.~Mayer.
	\newblock Rigidity in holomorphic and quasiregular dynamics.
	\newblock {\em Trans. Amer. Math. Soc.}, 355(11):4349--4363, 2003.
	
	\bibitem{Mayer1997paper}
	V.~Mayer.
	\newblock Uniformly quasiregular mappings of {L}att\`es type.
	\newblock {\em Conform. Geom. Dyn.}, 1:104--111, 1997.
	
	\bibitem{OkuyamaPankka2014paper}
	Y.~Okuyama and P.~Pankka.
	\newblock Equilibrium measures for uniformly quasiregular dynamics.
	\newblock {\em J. Lond. Math. Soc. (2)}, 89(2):524--538, 2014.
	
	\bibitem{Rickman1985paper}
	S.~Rickman.
	\newblock The analogue of {P}icard's theorem for quasiregular mappings in
	dimension three.
	\newblock {\em Acta Math.}, 154(3):195--242, 1985.
	
	\bibitem{Rickman1988proc}
	S.~Rickman.
	\newblock Existence of quasiregular mappings.
	\newblock In D.~Drasin, C.~J. Earle, F.~W. Gehring, I.~Kra, and A.~Marden,
	editors, {\em Holomorphic Functions and Moduli I : proceedings of a workshop
		held March 13-19, 1986 Berkeley.}, pages 179--185. Springer, 1988.
	
	\bibitem{Rickman2006paper}
	S.~Rickman.
	\newblock Simply connected quasiregularly elliptic 4-manifolds.
	\newblock {\em Ann. Acad. Sci. Fenn}, 31:97--110, 2006.
	
	\bibitem{Schwarz1995Book}
	G.~Schwarz.
	\newblock {\em Hodge Decomposition - A Method for Solving Boundary Value
		Problems}.
	\newblock Springer, 1995.
	
	\bibitem{Tukia1993paper}
	P.~Tukia.
	\newblock On quasiconformal groups.
	\newblock {\em J. Anal. Math.}, 46(1):318--346, 1986.
	
	\bibitem{Warner1983book}
	F.~W. Warner.
	\newblock {\em Foundations of differentiable manifolds and {L}ie groups},
	volume~94 of {\em Graduate Texts in Mathematics}.
	\newblock Springer-Verlag, New York-Berlin, 1983.
	\newblock Corrected reprint of the 1971 edition.
	
\end{thebibliography}
\end{document}